\newtheorem{theorem}{Theorem}
\newtheorem{lemma}{Lemma}
\newtheorem{quest}{Question}
\newtheorem{coro}{Corollary}
\newtheorem{obs}{Observation}
\newtheorem{claim}{Claim}
\let\oldenumerate\enumerate
\renewcommand{\enumerate}{
	\oldenumerate
	\setlength{\itemsep}{1.5pt}
	\setlength{\parskip}{0pt}
	\setlength{\parsep}{0pt}
}
\numberwithin{equation}{section}
\newcommand{\epn}{{\rm epn}}
\newcommand{\paw}{{\rm paw}}
\newcommand{\sdsp}{{\rm SDS-$P_5$-FREE}}
\begin{document}

\title{Secure domination in $P_5$-free graphs}

\author{$^{1}$Uttam K. Gupta, $^2$Michael A. Henning\thanks{Research supported in part by the University of Johannesburg and the South African National Research Foundation}, $^{3}$Paras Vinubhai Maniya\thanks{Corresponding author.}, and $^3$Dinabandhu Pradhan  \\ \\
$^{1}$Anugrah Memorial College Gaya\\
	Magadh University, Bodh Gaya, Bihar, India \\
	\small \tt Email: Email: ukumargpt@gmail.com \\ \\
$^{2}$Department of Mathematics and Applied Mathematics \\
	University of Johannesburg \\
	Auckland Park, 2006 South Africa\\
	\small \tt Email: mahenning@uj.ac.za \\ \\
$^{3}$Department of Mathematics \& Computing\\
Indian Institute of Technology (ISM) \\
Dhanbad, India \\
\small \tt Email: maniyaparas9999@gmail.com \\
\small \tt Email: dina@iitism.ac.in  \\ }

\date{}
\maketitle

\begin{abstract}
A dominating set of a graph $G$ is a set $S \subseteq V(G)$ such that every vertex in $V(G) \setminus S$ has a neighbor in $S$, where two vertices are neighbors if they are adjacent. A secure dominating set of $G$ is a dominating set $S$ of $G$ with the additional property that for every vertex $v \in V(G) \setminus S$, there exists a neighbor $u$ of $v$ in $S$ such that $(S \setminus \{u\}) \cup \{v\}$ is a dominating set of $G$. The secure domination number of $G$, denoted by $\gamma_s(G)$, is the minimum cardinality of a secure dominating set of $G$.  We prove that if $G$ is a $P_5$-free graph, then $\gamma_s(G) \le \frac{3}{2}\alpha(G)$, where $\alpha(G)$ denotes the independence number of $G$. We further show that if $G$ is a connected $(P_5, H)$-free graph for some $H \in \{ P_3 \cup P_1, K_2 \cup 2K_1, \paw, C_4\}$, then $\gamma_s(G)\le \max\{3,\alpha(G)\}$. We also show  that if $G$ is a $(P_3 \cup P_2)$-free graph, then $\gamma_s(G)\le \alpha(G)+1$.
\end{abstract}

{\small \textbf{Keywords:} Domination; Secure domination; Independence number; $P_5$-free graphs} \\
\indent {\small \textbf{AMS subject classification:} 05C69}

%\newpage

%\newpage
\section{Introduction}

All the graphs considered in this paper are finite, simple, and undirected. We use $P_n$, $C_n$, and $K_n$ to denote the \emph{path}, the \emph{cycle}, and the \emph{complete graph}, respectively on $n$ vertices. For a graph $G$, we use $V(G)$ and $E(G)$ to denote the vertex set and the edge set of $G$, respectively. Two vertices $u$ and $v$ of $G$ are \emph{adjacent} if $uv\in E(G)$. The $\emph{neighbors}$ of $v$ in $G$ are the vertices adjacent to $v$ in $G$. For vertex disjoint graphs $G$ and $H$, we use $G\cup H$ to denote the disjoint union of $G$ and $H$. For a positive integer $m$, we use $mG$ to denote the disjoint union of $m$ copies of $G$. Let $\mathcal{F}$ be a family of graphs. A graph $G$ is called \emph{$\mathcal{F}$-free} if $G$ does not contain any graph in $\mathcal{F}$ as an induced subgraph. When $\mathcal{F}=\{H\}$ (resp. $\{H_1,\ldots,H_k\}$, $k\ge 2$), we use $H$-free (resp. $(H_1,\ldots,H_k)$-free) graphs to denote the $\mathcal{F}$-free graphs. For some special graphs mentioned in this paper, we refer to Figure~\ref{special}.

\begin{figure}[htb]
		\begin{center}
			\includegraphics[scale=0.83]{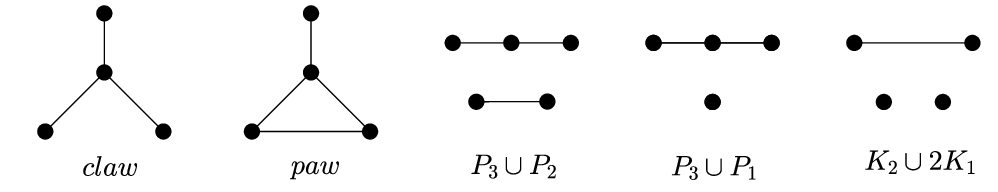}
		\caption{Some special graphs}\label{special}
		\end{center}
\end{figure}

A set $D\subseteq V(G)$ is a \emph{dominating set} of $G$ if every vertex in $V(G) \setminus D$ is adjacent to at least one vertex in $D$. The \emph{domination number} of $G$, denoted by $\gamma(G)$, is the minimum cardinality of a dominating set of $G$. The domination problem and its variants have been extensively studied in the literature. Domination in graphs and its variants have numerous applications and have been thoroughly explored from  theoretical and algorithmic perspectives. For a detailed study of domination and its variants, we refer the reader to~\cite{Haynes1,Haynes2,Haynes3,HeYe-book}.

Secure domination in graphs is a variant of domination introduced by Cockayne et al.~\cite{Cockayne05} in 2005. Since its birth, secure domination remained an active area of research (see~\cite{Burger08,  Burger16, Cockayne03, Klostermeyer08}). Consider a problem of defending the vertices of a graph $G$ with guards against an attacker. The vertices of $G$ may represent regions and at each vertex, at most one guard is located. The attacker can attack at one vertex of $G$ at a time and can do two consecutive attacks. A guard can protect its position vertex and its every neighbor (possibly by moving to it). To defend against the attacker, only one guard can move at a time. If the attacker has already attacked one of the vertices of $G$ where a guard is not placed, then in that case, one guard can move from its current position vertex to an adjacent vertex to stop the attack. Additionally, guards can still protect the vertices of $G$ if a second attack occurs. The idea of secure domination was created as a response to this need, where the arrangements of defending the vertices of $G$ both before and after the attack have been prevented.
	
	In order to fulfill the above requirement, a secure dominating set is formally defined as follows. A set $S \subseteq V(G)$ is called a \emph{secure dominating set} of $G$ if $S$ is a dominating set of $G$ and for each $v\in V(G)\setminus S$, there exists a neighbor $u$ of $v$ in $S$ such that $(S\setminus \{u\})\cup \{v\}$ is a dominating set of $G$. The \emph{secure domination number} of $G$, denoted by $\gamma_s(G)$, is the minimum cardinality of a secure dominating set of $G$. A set $I\subseteq V(G)$ is an \emph{independent set} if no two vertices in the set $I$ are adjacent to each other. The \emph{independence number} of $G$, denoted by $\alpha(G)$, is the maximum cardinality of an independent set of $G$. An independent set in $G$ of cardinality $\alpha(G)$ we call an $\alpha$-set of $G$. If $I$ is a maximal independent set of $G$ and $v$ is an arbitrary vertex in $V(G)\setminus I$, then by the maximality of $I$ the set $I \cup \{v\}$ is not an independent set, implying that the vertex $v$ has at least one neighbor in $I$. We state this well-known observation formally as follows.
	
\begin{obs}
\label{dom}
Every maximal independent set of a graph $G$ is a dominating set of $G$.
\end{obs}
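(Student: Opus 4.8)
The plan is to prove this directly from the definitions, exploiting \emph{maximality} of the independent set. First I would fix a maximal independent set $I$ of $G$ and let $v$ be an arbitrary vertex of $V(G) \setminus I$. The crucial point — already foreshadowed in the paragraph preceding the statement — is that maximality of $I$ forbids enlarging $I$ by $v$ while keeping it independent: the set $I \cup \{v\}$ properly contains $I$, so if $I \cup \{v\}$ were independent it would contradict the maximality of $I$. Hence $I \cup \{v\}$ is not an independent set.

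Next I would unpack what the failure of independence of $I \cup \{v\}$ gives us. By definition, a non-independent set contains two adjacent vertices; since $I$ itself is independent, any such adjacent pair inside $I \cup \{v\}$ must include $v$ as one of its two vertices. Therefore there exists $u \in I$ with $uv \in E(G)$, i.e., $v$ has a neighbor in $I$.

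Finally, since $v$ was an arbitrary vertex of $V(G) \setminus I$, every vertex outside $I$ has a neighbor in $I$. This is precisely the defining condition for $I$ to be a dominating set of $G$, so the proof is complete.

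I expect no real obstacle here; the statement is elementary. The only subtlety worth making explicit is the distinction between a \emph{maximal} independent set (inclusion-wise maximal, which is what the argument actually uses) and a \emph{maximum} independent set (one of largest cardinality, i.e.\ an $\alpha$-set); the conclusion applies in particular to every $\alpha$-set, since an $\alpha$-set is necessarily inclusion-maximal.
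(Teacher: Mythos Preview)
Your proposal is correct and follows exactly the argument the paper gives in the paragraph immediately preceding the observation: for any $v\in V(G)\setminus I$, maximality of $I$ forces $I\cup\{v\}$ to be non-independent, so $v$ has a neighbor in $I$. Your added remark distinguishing maximal from maximum is accurate and harmless.
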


\subsection{Related works}	

By Observation~\ref{dom}, for every graph $G$ we have $\gamma(G)\le \alpha(G)$. We note that $\gamma_s(C_5)=3$ and $\alpha(C_5)=2$, implying that $\gamma_s(G)$ can be larger than $\alpha(G)$. Merouane and Chellali~\cite{merouane15} established the following upper bound on the secure domination number of a graph in terms of its independence number.

\begin{theorem}[\cite{merouane15}]\label{secure-versus-indep}
If $G$ is a graph, then $\gamma_s(G) \le 2\alpha(G) - 1$.
\end{theorem}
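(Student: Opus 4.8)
\medskip
\noindent\emph{Proof proposal.}\quad The plan is to construct an explicit secure dominating set of size at most $2\alpha(G)-1$ out of a maximum independent set together with a small number of private-neighbour representatives. I would first record the following sufficient condition, which does most of the work: if $D\subseteq V(G)$ is a dominating set such that every vertex of $V(G)\setminus D$ has at least two neighbours in $D$, then $D$ is a secure dominating set of $G$. Indeed, given $v\in V(G)\setminus D$ and any neighbour $u\in D$ of $v$, the only vertices whose domination could be destroyed when $u$ is exchanged for $v$ are $u$ itself, which is now dominated by $v$, and vertices of $V(G)\setminus D$ whose unique neighbour in $D$ is $u$; the two-neighbour hypothesis rules out the latter, so $(D\setminus\{u\})\cup\{v\}$ is again a dominating set.

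Next I would fix an $\alpha$-set $I=\{v_1,\dots,v_k\}$, so $k=\alpha(G)$, and for each $i$ let $\epn(v_i)=\{w\in V(G)\setminus I: N(w)\cap I=\{v_i\}\}$ be the external private neighbourhood of $v_i$ with respect to $I$. Since $I$ is a maximal independent set it is dominating (Observation~\ref{dom}), so every vertex of $V(G)\setminus I$ either lies in some $\epn(v_i)$ or has at least two neighbours in $I$. The structural point that makes the argument go through, and the reason a \emph{maximum} rather than merely maximal independent set is needed, is that each $\epn(v_i)$ is a clique: two non-adjacent vertices of $\epn(v_i)$ together with $I\setminus\{v_i\}$ would form an independent set of size $k+1$. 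Now pick $x_i\in\epn(v_i)$ for each $i$ with $\epn(v_i)\neq\varnothing$ and set $D=I\cup\{x_i:\epn(v_i)\neq\varnothing\}$. Every vertex of $V(G)\setminus D$ having two neighbours in $I$ has two neighbours in $D$, while every $w\in\epn(v_i)\setminus\{x_i\}$ has the two $D$-neighbours $v_i$ and $x_i$ (the latter by the clique property). By the criterion above, $D$ is a secure dominating set, and $|D|\le 2k$.

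To obtain the sharp bound I would remove one representative. If all the $\epn(v_i)$ are empty then already $D=I$ has size $k\le 2k-1$; otherwise relabel so that $\epn(v_1)\neq\varnothing$ and put $D'=D\setminus\{x_1\}$, so $|D'|\le 2k-1$. The one point at which the two-neighbour criterion breaks down is here, because $x_1$ is left with only the single $D'$-neighbour $v_1$, and confirming that $D'$ is still secure is the main obstacle. I would settle it by a short case analysis on a vertex $v\in V(G)\setminus D'$: if $v$ has at least two neighbours in $I$, defend it through a neighbour $v_i\in I$ with $i\ge2$ (such an $i$ exists); if $v\in\epn(v_i)$ with $i\ge2$, defend it through $v_i$; and if $v\in\epn(v_1)$ (this case now also covers $v=x_1$), defend it through $v_1$. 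In each instance one checks that $(D'\setminus\{v_i\})\cup\{v\}$ is a dominating set: the vertex $v_i$ is dominated by $v$, and any other undominated vertex would have $v_i$ as its only neighbour in $I$, hence would lie in $\epn(v_i)$, and would therefore be adjacent to the surviving representative $x_i\in D'$ (when $i\ge2$) or to $v$ itself (when $i=1$, since $\epn(v_1)$ is a clique), a contradiction. Since no part of this uses connectivity or a minimum-degree assumption (any isolated vertices of $G$ simply belong to $I\subseteq D'$), no preliminary reductions are required.
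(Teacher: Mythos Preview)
Your argument is correct. The paper does not supply its own proof of this theorem; it is quoted from Merouane and Chellali~\cite{merouane15} as background, so there is nothing in the paper to compare against directly. Your construction---take an $\alpha$-set $I$, add one private-neighbour representative $x_i$ per vertex with $\epn(v_i)\neq\varnothing$, then drop a single representative---is exactly the standard route, and it dovetails with the tools the paper records (Observation~\ref{dom} and Lemma~\ref{epncomp}). The case analysis for $D'=D\setminus\{x_1\}$ is sound: in each case the only potentially uncovered vertices after swapping $v_i$ for $v$ lie in $\epn(v_i)$, and for $i\ge 2$ the surviving $x_i$ (or, for $i=1$, the clique property of $\epn(v_1)$ together with $v$ itself) handles them. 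One cosmetic remark: when you write ``any other undominated vertex would have $v_i$ as its only neighbour in $I$'', what you actually use (and what your argument gives) is the stronger statement that $v_i$ is its only neighbour in $D'$; you might phrase it that way to make the contradiction with $x_i\in D'$ immediate.
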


A natural question arises whether we can improve the upper bound in Theorem~\ref{secure-versus-indep} to $\gamma_s(G) \le c \alpha(G)$, where $c < 2$ for specific classes of graphs. Cockayne et al.~\cite{Cockayne03} showed that $c = \frac{3}{2}$ for the class of claw-free graphs.

\begin{theorem}[\cite{Cockayne03}]\label{claw-free}
If $G$ is a claw-free graph, then $\gamma_s(G)\le \frac{3}{2}\alpha(G)$.
\end{theorem}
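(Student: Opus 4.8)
The plan is to take a maximum independent set $I$ of $G$ and enlarge it to a secure dominating set by adjoining only a few vertices, the key being that claw-freeness forces $I$ itself to be very close to secure dominating.

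Let $I$ be an $\alpha$-set of $G$, so $|I|=\alpha(G)=:k$; being a maximal independent set, $I$ is a dominating set of $G$ by Observation~\ref{dom}. Since $G$ is claw-free, no vertex of $V(G)\setminus I$ can have three pairwise non-adjacent neighbours in $I$ (those three plus the vertex would induce a claw), so each vertex outside $I$ has exactly one or exactly two neighbours in $I$; write $V(G)\setminus I=V_1\cup V_2$ accordingly and, for $u\in I$, put $P_u=\{v\in V_1:N(v)\cap I=\{u\}\}$, so $\{P_u:u\in I\}$ partitions $V_1$. The fact that makes the argument work is that each $P_u$ is a clique: if $a,b\in P_u$ were non-adjacent then $(I\setminus\{u\})\cup\{a,b\}$ would be an independent set of size $k+1$, contradicting the maximality of $I$.

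I would then use the standard exchange criterion: for a dominating set $S$ and $v\notin S$, the set $(S\setminus\{u\})\cup\{v\}$ is again dominating exactly when $\epn(u,S)\subseteq N[v]$, where $\epn(u,S)$ is the set of vertices of $V(G)\setminus S$ having $u$ as their only neighbour in $S$. Taking $S=I$, a vertex $v\in P_u$ has $u$ as its only neighbour in $I$ (its only possible defender) and $\epn(u,I)=P_u\subseteq N[v]$ because $P_u$ is a clique; hence every vertex of $V_1$ is defended by $I$. Therefore $I$ can fail to be secure dominating only because of vertices $w\in V_2$, say with $N(w)\cap I=\{u,u'\}$, for which $P_u\not\subseteq N[w]$ and $P_{u'}\not\subseteq N[w]$; call the set of these vertices $B$, and note that every endpoint of a pair arising from $B$ has a nonempty $P_u$. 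The decisive observation is that, for any $u\in I$, adjoining a single vertex $p_u\in P_u$ to $S$ makes $\epn(u,S)=\emptyset$ (as $P_u$ is a clique, $P_u\subseteq N[p_u]$), which defends every $V_2$-vertex adjacent to $u$ at once; moreover enlarging $S$ never destroys a defence already present, as it only shrinks private neighbourhoods. So, letting $B^{*}$ be the graph on vertex set $I$ whose edge set consists of the pairs $N(w)\cap I$ over $w\in B$, the set $S=I\cup\{p_u:u\in C\}$ is a secure dominating set of $G$ for every vertex cover $C$ of $B^{*}$, whence $\gamma_s(G)\le k+\tau(B^{*})$, with $\tau$ the vertex-cover number.

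It remains to show $\tau(B^{*})\le k/2$, equivalently $\alpha(B^{*})\ge k/2$ since $\alpha(B^{*})+\tau(B^{*})=|I|$, and this is the step I expect to be the main obstacle. It must draw on the maximality of $I$ well beyond the clique fact: unwinding the definition of $B$, each $w\in B$ with pair $\{u,u'\}$ comes equipped with $z\in P_u$ and $z'\in P_{u'}$ with $z,z'\notin N[w]$, and then $\{z,u,w,u',z'\}$ induces a $P_5$, or a $C_5$ when $z\sim z'$; piecing such configurations together along a dense part of $B^{*}$ — for instance a triangle, which is supplied by three $B$-vertices linking three members of $I$ — one wants to extract from the associated private vertices together with suitable $B$-vertices an independent set of $G$ strictly larger than $I$, a contradiction, thereby forcing $B^{*}$ to have no component of independence ratio below $\tfrac12$ (no triangle, no $C_5$, no $K_4$, and so on) and hence $\alpha(B^{*})\ge k/2$. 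The delicate point is choosing the private vertices $z$ consistently so that they, together with the chosen $B$-vertices, stay independent; handling this, presumably through an extremal/exchange argument organised around a maximum matching of $B^{*}$, is the heart of the proof. That the coefficient cannot be lowered is witnessed by $C_5$, where $\gamma_s(C_5)=3=\tfrac32\alpha(C_5)$ and exactly one extra vertex must be spent on the two-element independent set.
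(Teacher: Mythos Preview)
The paper does not prove Theorem~\ref{claw-free}; it is quoted from~\cite{Cockayne03} as background, so there is no proof here to compare against.  Your proposal must therefore stand on its own, and it does not: the crucial inequality $\tau(B^{*})\le k/2$, which you yourself flag as ``the main obstacle'', is in fact \emph{false} for an arbitrary choice of the $\alpha$-set $I$.

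Here is a concrete claw-free counterexample.  Take nine vertices $u_1,u_2,u_3,\,z_1,z_2,z_3,\,w_{12},w_{13},w_{23}$ with edge set
\[
\{u_iz_i:i\in[3]\}\;\cup\;\{u_iw_{ij}:i\ne j\}\;\cup\;\{z_iz_j:i\ne j\}\;\cup\;\{w_{ij}w_{k\ell}:\{i,j\}\ne\{k,\ell\}\},
\]
so the $z$'s induce a triangle, the $w$'s induce a triangle, and there are no $z$--$w$ edges.  One checks directly that $G$ is claw-free and that $I=\{u_1,u_2,u_3\}$ is a maximum independent set, so $k=3$.  Each $P_{u_i}=\{z_i\}$, and every $w_{ij}$ is a $V_2$-vertex that is \emph{not} $I$-defended (since $w_{ij}\not\sim z_i$ and $w_{ij}\not\sim z_j$).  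Hence $B^{*}$ is the triangle on $\{u_1,u_2,u_3\}$, giving $\tau(B^{*})=2>3/2=k/2$.  Your plan to show that $B^{*}$ has ``no triangle, no $C_5$, no $K_4$'' therefore cannot succeed; the exchange/forbidden-configuration argument you sketch would have to produce an independent set of size~$4$ in this $G$, and there is none.

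Note that the theorem itself is not violated: in this example one can check that $S=\{u_1,u_2,u_3,z_1\}$ is a secure dominating set, so $\gamma_s(G)\le 4\le\tfrac32\alpha(G)$.  The point is that the extra vertex $z_1$ does double duty --- because $z_1\sim z_2$ and $z_1\sim z_3$, adding it empties not only $\epn(u_1,S)$ but also $\epn(u_2,S)$ and $\epn(u_3,S)$.  Your accounting, which charges each added $p_u$ only to the single vertex $u\in C$, misses this and is too pessimistic; the bound $k+\tau(B^{*})$ is valid but can exceed $\tfrac32 k$.  A correct argument must either choose $I$ more carefully (in the example, taking $I=\{z_1,u_2,u_3\}$ gives $B^{*}$ a path and $\tau(B^{*})=1$), or replace the vertex-cover reduction by a counting that tracks how many vertices of $A_I$ are ``neutralised'' per added vertex, in the spirit of the iterative procedure the paper uses for Theorem~\ref{thm:main1}.
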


Since the class of $3K_1$-free graphs is a subclass of the class of claw-free graphs, the upper bound in Theorem~\ref{claw-free} also  holds in the class of $3K_1$-free graphs. We note that if $G$ is a $3K_1$-free graph, then $\alpha(G) \le 2$. Hence if $G$ is a $3K_1$-free graph, then $\gamma_s(G)\le 3$. Merouane and Chellali~\cite{merouane15} proved that the upper bound in Theorem~\ref{claw-free} also holds in the class of $C_3$-free graphs, that is, in $K_3$-free graphs.

\begin{theorem}[\cite{merouane15}]\label{$C_3$-free}
If $G$ is a $C_3$-free graph, then $\gamma_s(G) \le \frac{3}{2}\alpha(G)$.
\end{theorem}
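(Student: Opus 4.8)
The plan is to start from a maximum independent set $I$ of $G$, which by Observation~\ref{dom} is already a dominating set with $|I|=\alpha(G)$, and to turn it into a secure dominating set by inserting at most $\tfrac12\alpha(G)$ further vertices, yielding $\gamma_s(G)\le |I|+\tfrac12\alpha(G)=\tfrac32\alpha(G)$. Throughout, for $u$ in a set $S$ I write $\epn(u,S)$ for the external private neighbours of $u$ relative to $S$, i.e.\ the vertices in $V(G)\setminus S$ whose only neighbour in $S$ is $u$; when $\epn(u,I)$ is a single vertex I denote it by $w_u$.

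The two places where triangle-freeness is essential are a swap lemma and a structural lemma. The \emph{swap lemma} states that for a dominating set $S$ and $v\in V(G)\setminus S$, the set $(S\setminus\{u\})\cup\{v\}$ is dominating exactly when $u\in N(v)\cap S$ and $\epn(u,S)\subseteq\{v\}$: after the swap the only vertices that can lose domination are $u$ (saved because $u\sim v$) and the external private neighbours of $u$, and any such neighbour $w\neq v$ is adjacent to $u$, so in a triangle-free graph $w\not\sim v$ and $w$ becomes undominated unless $w=v$. The \emph{structural lemma} states that, because $I$ is a maximum independent set and $G$ is triangle-free, every $u\in I$ satisfies $|\epn(u,I)|\le 1$; otherwise the external private neighbours of $u$, all lying in the independent set $N(u)$, would be pairwise nonadjacent, and $(I\setminus\{u\})\cup\epn(u,I)$ would be an independent set larger than $I$.

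Combining these, I would observe that $I$ already secures two kinds of vertices: any vertex whose unique neighbour in $I$ is $u$ (it is then the single private neighbour $w_u$, hence secured by $u$ via the swap lemma), and any vertex having a neighbour $u\in I$ with $\epn(u,I)=\emptyset$. The vertices still to be secured therefore form the set $B$ of vertices with at least two neighbours in $I$, every one of which has a (necessarily unique) private neighbour. To secure $v\in B$ it suffices to make one neighbour $u\in N(v)\cap I$ clean, which is accomplished by inserting $w_u$ into $S$; this is safe because inserting vertices never breaks domination and only shrinks external private neighbourhoods, so no previously secured vertex becomes unsecured. Thus the problem reduces to resolving $B$ by as few insertions (private neighbours $w_u$, or the troublesome vertices of $B$ themselves) as possible.

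The hard part will be to carry out this resolution within the budget $\tfrac12\alpha(G)$, since a naive minimum cover of $B$ by vertices of $I$ can be too large. Here I would exploit maximality of $I$ together with triangle-freeness. Maximality yields a Hall-type inequality $|B'|\le |N(B')\cap I|$ for every independent $B'\subseteq B$ (otherwise $B'\cup(I\setminus N(B'))$ would beat $I$), and triangle-freeness constrains the private neighbours: a degree-two vertex of $B$ adjacent to $u_i,u_j$ forces $w_{u_i}\sim w_{u_j}$, so the degree-two part of $B$ behaves like a triangle-free auxiliary graph on $I$, while higher-degree vertices of $B$ are easier to cover. The extremal ratio $\tfrac32$ is already visible in $C_5$, where two independent vertices demand three guards, which suggests the final accounting should be a charging argument that assigns each inserted vertex to a pair of independent-set vertices. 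Making this charging simultaneously absorb the degree-two vertices of $B$ and the remaining vertices of $B$, so that the total number of insertions never exceeds $\tfrac12\alpha(G)$, is the step I expect to require the most care.
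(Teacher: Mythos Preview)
This theorem is quoted from \cite{merouane15} and the present paper does not supply a proof of it; it appears only as background in the related-work section. So there is no in-paper argument to compare yours against.

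On the merits of your proposal: the swap lemma and the structural lemma are both correct, and your observation that a degree-two unsecured vertex $v$ with $N(v)\cap I=\{u_i,u_j\}$ forces $w_{u_i}\sim w_{u_j}$ is also right (otherwise $\{v,w_{u_i},w_{u_j}\}\cup(I\setminus\{u_i,u_j\})$ is independent and beats $I$, using triangle-freeness to get $v\not\sim w_{u_i}$ and $v\not\sim w_{u_j}$). The skeleton you describe --- start from a maximum independent set and insert private neighbours to empty out external private neighbourhoods --- is in fact the same template the paper uses for its own Theorem~\ref{thm:main1} on $P_5$-free graphs, where each insertion is shown to remove at least two vertices from $A_S$.

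The genuine gap is that you stop exactly where the work begins. You explicitly defer the charging argument, and the fragments you offer do not yet assemble into a bound. In particular, your edge-forcing observation breaks for unsecured vertices with three or more neighbours in $I$: if $v$ is adjacent to $u_1,u_2,u_3\in I$ then $\{v,w_{u_1},w_{u_2}\}\cup(I\setminus\{u_1,u_2\})$ still contains the edge $vu_3$, so no contradiction arises and you cannot conclude $w_{u_1}\sim w_{u_2}$. Consequently you have no mechanism controlling the insertions needed for higher-degree unsecured vertices, and no argument that the auxiliary graph you build on $A_I$ has a dominating set of size at most $\tfrac12|A_I|$ (which would require, at minimum, ruling out isolated vertices there). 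What you have is a correct reduction together with an honest statement of what remains to be proved; it is not yet a proof.
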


We note that the upper bound in Theorem~\ref{$C_3$-free} is tight since if $G$ is the graph obtained from the disjoint union of copies of $C_5$, then the resulting graph $G$ is $C_3$-free and satisfies $\gamma_s(G)=\frac{3}{2}\alpha(G)$. Merouane and Chellai~\cite{merouane15} also showed that if $G$ is a bipartite graph, then $\gamma_s(G)\le \alpha(G)$. Recently, Degawa and Saito~\cite{degawa23} established that the secure domination number of a $C_5$-free graph is at most its independence number.

\begin{theorem}[\cite{degawa23}]\label{cycle}
If $G$ is a $C_5$-free graph, then $\gamma_s(G)\le \alpha(G)$.
\end{theorem}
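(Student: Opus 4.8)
The plan is to prove the stronger statement that any maximum independent set $I$ of $G$ is itself a secure dominating set, which immediately yields $\gamma_s(G) \le |I| = \alpha(G)$. By Observation~\ref{dom} the set $I$ is already dominating, so only the security condition needs checking. For $u \in I$, write $\epn_I(u)$ for the set of external private neighbors of $u$, i.e. the vertices $w \in V(G) \setminus I$ whose only neighbor in $I$ is $u$. The starting point is the standard reformulation of security: for $v \in V(G) \setminus I$ and $u \in N(v) \cap I$, the swapped set $(I \setminus \{u\}) \cup \{v\}$ is again dominating precisely when $\epn_I(u) \subseteq N[v]$ (the vertex $u$ is then dominated by $v$, and the only vertices that could otherwise lose a dominator are exactly those in $\epn_I(u)$). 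So the task reduces to exhibiting, for each $v \notin I$, a neighbor $u \in N(v) \cap I$ with $\epn_I(u) \subseteq N[v]$.

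First I would record a consequence of the maximality of $I$: for every $u \in I$ the set $\epn_I(u)$ is a clique, since two non-adjacent private neighbors $w, w'$ of $u$ would make $(I \setminus \{u\}) \cup \{w, w'\}$ an independent set larger than $I$. This disposes of every vertex $v$ having a \emph{unique} neighbor $u$ in $I$: such a $v$ lies in $\epn_I(u)$, so by the clique property it is adjacent to all other members of $\epn_I(u)$, giving $\epn_I(u) \subseteq N[v]$ and securing $v$ through $u$.

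The main obstacle is the remaining case of a vertex $v$ with at least two neighbors in $I$, and this is exactly where $C_5$-freeness must be used. Suppose some such $v$ cannot be secured; then for each neighbor $u_i \in N(v) \cap I$ there is a witness $w_i \in \epn_I(u_i) \setminus N[v]$, and these witnesses are distinct because the sets $\epn_I(u_i)$ are pairwise disjoint. For any two indices $i \ne j$, the vertices $w_i, u_i, v, u_j, w_j$ already realize all the edges and non-edges of an induced path in that order, using $u_i u_j \notin E(G)$, the facts $w_i, w_j \notin N[v]$, and that each $w_\ell$ has $u_\ell$ as its only neighbor in $I$. If moreover $w_i \sim w_j$, then $w_i u_i v u_j w_j w_i$ is an induced $C_5$, which is forbidden; hence the witnesses $\{w_1, \dots, w_k\}$ form an independent set.

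To finish I would swap out all of $v$'s neighbors in $I$ simultaneously: $I' = (I \setminus \{u_1, \dots, u_k\}) \cup \{w_1, \dots, w_k\}$ is independent (the $w_i$ are pairwise non-adjacent and each is adjacent within $I$ only to its now-deleted $u_i$) and has the same cardinality as $I$. But $v$ has no neighbor in $I'$, so $I' \cup \{v\}$ would be an independent set of size $\alpha(G) + 1$, a contradiction. This rules out unsecurable vertices with two or more neighbors in $I$ and completes the verification that $I$ is secure. The step I expect to require the most care is confirming that the five-vertex configuration is genuinely induced (that every non-consecutive pair is a non-edge), since the entire use of the hypothesis is funneled through producing that induced $C_5$.
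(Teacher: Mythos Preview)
Your argument is correct. The reformulation $\epn_I(u)\subseteq N[v]$ is exactly the right criterion, the clique property of $\epn_I(u)$ handles the one-neighbor case, and for $|N(v)\cap I|\ge 2$ the five vertices $w_i,u_i,v,u_j,w_j$ do form an induced $C_5$ when $w_iw_j\in E(G)$: you have checked all ten pairs (the four consecutive edges, the non-edge $u_iu_j$ from independence, $w_iv,w_jv\notin E$ by choice of the witnesses, and $w_iu_j,w_ju_i\notin E$ since each $w_\ell$ is a private neighbor of $u_\ell$). The swap $I'=(I\setminus\{u_1,\dots,u_k\})\cup\{w_1,\dots,w_k\}$ is then independent, $v\notin I'$, and $v$ has no neighbor in $I'$, giving the contradiction $|I'\cup\{v\}|>\alpha(G)$.

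There is nothing to compare against in the present paper: Theorem~\ref{cycle} is quoted from Degawa and Saito~\cite{degawa23} and is not reproved here. Your proof is in fact the argument of~\cite{degawa23} (their note is essentially this one-page computation), so you have independently reconstructed the original proof.
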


We note that the bounds for bipartite graphs and $C_5$-free graphs are tight since a star graph $G = K_{1,n}$ is a $C_5$-free bipartite graph and satisfies $\gamma_s(G)=\alpha(G)$~\cite{Cockayne05}.
	%	 Specifically, the relation between the independence number and the secure domination number of a graph has received significant attention.

\section{Main results and motivation}

The class of $P_4$-free graphs is a subclass of the class of $C_5$-free graphs. Hence as a consequence of Theorem~\ref{cycle}, if $G$ is a $P_4$-free graph, then $\gamma_s(G) \le \alpha(G)$. Moreover, two linear algorithms have been independently developed to compute the secure domination number of a $P_4$-free graph (see~\cite{Araki19, jha19}). Hence it is of interest to find a superclass of the class of $P_4$-free graphs that yields a tight upper bound on the secure domination number of a graph in terms of its independence number.
	%	 Lokshantov et al.~\cite{Lokshantov14} have developed $O(n^{14})$ algorithm to find independence number of  $P_5$-free graph having $n$ vertices.
In this paper, we consider such a superclass of $P_4$-free graphs, namely $P_5$-free graphs. We shall prove the following result, a proof of which is given in Section~\ref{Sect:P5-free}.

\begin{theorem}
\label{thm:main1}
If $G$ is a $P_5$-free graph, then $\gamma_s(G) \le \frac{3}{2}\alpha(G)$.
\end{theorem}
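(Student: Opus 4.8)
The plan is to take a maximum independent set and to repair it into a secure dominating set by adding at most $\lfloor\alpha(G)/2\rfloor$ extra vertices. First I would reduce to the connected case: $\gamma_s$ and $\alpha$ are additive over components and $P_5$-free graphs are closed under disjoint union. If $\alpha(G)\le 2$ then $G$ is $3K_1$-free, hence claw-free, and Theorem~\ref{claw-free} already gives the bound; so assume $k:=\alpha(G)\ge 3$ and fix an $\alpha$-set $I$. By Observation~\ref{dom}, $I$ dominates $G$. For $u\in I$ write $\epn(u)$ for the set of vertices outside $I$ whose unique neighbour in $I$ is $u$; since $I$ is maximum, any two vertices of $\epn(u)$ are adjacent, so $\epn(u)\cup\{u\}$ is a clique.

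Call $v\notin I$ \emph{bad} if the swapped set $(I\setminus\{x\})\cup\{v\}$ fails to dominate for every $x\in N(v)\cap I$; unwinding definitions, $v$ is bad exactly when $v$ has at least two neighbours in $I$ and, for each $x\in N(v)\cap I$, some $w\in\epn(x)$ is non-adjacent to $v$ (a ``witness''). Two key structural lemmas, each proved by exhibiting a forbidden induced $P_5$, are the following. (i) Any two witnesses of a common bad vertex $v$ are adjacent; consequently every bad vertex has \emph{exactly two} neighbours in $I$ — if $v$ had neighbours $u_1,u_2,u_3\in I$ with witnesses $w_i\in\epn(u_i)\setminus N[v]$, then $u_1-v-u_2-w_2-w_3$ would be an induced $P_5$. (ii) For non-adjacent vertices $v,v'$ outside $I$ that each have at least two neighbours in $I$, the sets $N(v)\cap I$ and $N(v')\cap I$ are disjoint or nested (else $u_1-v-u_0-v'-u_2$ is an induced $P_5$, where $u_0$ is common and $u_1,u_2$ are private); in particular two non-adjacent bad vertices have equal or disjoint pairs of $I$-neighbours, and each bad vertex $v$ carries an induced five-cycle $v-u_1-w_1-w_2-u_2-v$ on its two $I$-neighbours $u_1,u_2$ and witnesses $w_i\in\epn(u_i)\setminus N[v]$.

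The repair step is the following reduction. For any $X\subseteq V(G)\setminus I$, the set $S=I\cup X$ is a secure dominating set provided every bad vertex $v$ either lies in $N[X]$ or has a neighbour $u\in N(v)\cap I$ with $\epn(u)\cap X\neq\emptyset$. Indeed, swapping a vertex of $X$ never destroys domination (as $I\subseteq S$); swapping $u\in I$ with $v$ succeeds once $\epn(u)\setminus N[v]\subseteq N[X]$, which by the clique structure of $\epn(u)\cup\{u\}$ holds as soon as $X$ meets $\epn(u)$; and non-bad vertices retain their good swap since adding vertices to $I$ only enlarges the swapped sets. The crucial point is that placing a single vertex of $\epn(u)$ into $X$ simultaneously repairs \emph{all} bad vertices having $u$ as a neighbour. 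Let $H$ be the graph on vertex set $I$ whose edges are the two-element sets $N(v)\cap I$ over all bad $v$ (these have size two by lemma~(i)). Choosing a minimum vertex cover $C$ of $H$ and, for each $u\in C$, one private neighbour $p_u\in\epn(u)$, we get $\gamma_s(G)\le k+\tau(H)$, where $\tau(H)$ is the vertex-cover number of $H$.

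It remains to prove $\tau(H)\le\lfloor k/2\rfloor$, equivalently that $H$ has an independent set of size at least $\lceil k/2\rceil$, and this is the main obstacle. Morally, each bad vertex ``uses up'' the edge $\{u_1,u_2\}$ of $I$ via its induced $C_5$, so vertex-disjoint such $C_5$'s force at most $k/2$ edges in $H$, while overlapping ones are absorbed into a common cover vertex — exactly the budget of one guard per cover vertex. Turning this into a proof requires ruling out, or otherwise handling, the dense configurations of $H$ that would push $\tau(H)$ above $k/2$ (a triangle of bad pairs, and more generally odd ``cycles'' of bad pairs); I expect this to be a further case analysis using $P_5$-freeness together with the laminarity lemma~(ii) and, if needed, an extremal choice of $I$ among all $\alpha$-sets (for instance one minimising the number of bad vertices). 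Granting $\tau(H)\le\lfloor k/2\rfloor$, we obtain $\gamma_s(G)\le k+\lfloor k/2\rfloor\le\tfrac32 k=\tfrac32\alpha(G)$.
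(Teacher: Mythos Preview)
Your structural lemmas are correct and clean---in particular, the fact that every vertex not $I$-defended has \emph{exactly} two neighbours in $I$ (your lemma~(i)) is sharper than anything the paper states explicitly, and your repair criterion for $S=I\cup X$ is valid. The gap is in the final counting step: the inequality $\tau(H)\le\lfloor k/2\rfloor$ that you reduce to is simply false, so no amount of case analysis (and no extremal choice of $I$, in general) will prove it.

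Here is a $P_5$-free counterexample on nine vertices. Take $I=\{u_1,u_2,u_3\}$ independent, vertices $w_1,w_2,w_3$ with $w_i\in\epn(u_i)$ and $\{w_1,w_2,w_3\}$ a clique, and vertices $v_{12},v_{13},v_{23}$ forming a clique with $N(v_{ij})\cap I=\{u_i,u_j\}$, each $v_{ij}$ non-adjacent to $w_i,w_j$ but adjacent to $w_k$ (where $\{i,j,k\}=\{1,2,3\}$). One checks directly that this graph is $P_5$-free with $\alpha=3$, that $I$ is an $\alpha$-set, and that the bad vertices are precisely $v_{12},v_{13},v_{23}$. Hence $H=K_3$ and $\tau(H)=2>\lfloor 3/2\rfloor=1$. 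Your bound would give only $\gamma_s\le 5$, whereas $\tfrac32\alpha=4.5$ forces $\gamma_s\le 4$; and indeed $\{u_1,u_2,u_3,w_2\}$ is a secure dominating set. The point is that a single private neighbour $w_2\in\epn(u_2)$ repairs \emph{all three} bad vertices here, not just those whose $I$-pair contains $u_2$: the vertex $v_{13}$ is repaired via your condition~(a), because $v_{13}$ is adjacent to $w_2$. Your vertex-cover bound ignores condition~(a) entirely and therefore overcounts.

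The paper avoids this by arguing dynamically rather than statically. It runs a greedy procedure: pick a bad vertex $v$ with $N(v)\cap I=\{u_1,u_2\}$, add some $x\in\epn(u_1)\setminus N(v)$ to $S$, and then proves (this is the crucial Lemma~\ref{mainthmcl-3}, using $P_5$-freeness) that \emph{both} $u_1$ and $u_2$ drop out of $A_S$---not just $u_1$. Equivalently, every vertex that remains bad after adding $x$ has its $I$-pair disjoint from $\{u_1,u_2\}$. Thus each added vertex deletes two vertices from the ``active'' part of $I$, giving at most $\lfloor k/2\rfloor$ additions. In your language, the correct bound is not $\tau(H)$ but the number of steps in a greedy edge-removal process in which selecting one edge of $H$ kills both of its endpoints (and hence all incident edges); that number is at most $\lfloor k/2\rfloor$ trivially. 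The substance is in proving that one added private neighbour really does kill both endpoints, and that is where the induced-$P_5$ argument has to be placed.
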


Given a graph $G$ and a positive integer $k$, the secure domination problem is to decide whether $G$ has a secure dominating set of cardinality at most~$k$. Merouane and Chellali~\cite{merouane15} showed that secure domination problem is $\mathbf{NP}$-complete for split graphs. Foldes and Hammer~\cite{Foldes} proved that $G$ is a split graph if and only if $G$ is a $(2K_2,C_4,C_5)$-free graph. Since the class of split graphs is a subclass of the class of $C_5$-free graphs, as a consequence of Theorem~\ref{cycle} we infer that if $G$ is a split graph, then $\gamma_s(G)\le \alpha(G)$. Recently, Chen et al.~\cite{Chen25} have proved that if $G$ is a $2K_2$-free graph, then $\gamma_s(G)\le \alpha(G)+1$. Note that the class $2K_2$-free graphs is a superclass of the class of split graphs. We are therefore interested in a superclass of the class of $2K_2$-free graphs that yields a tight upper bound on the secure domination number in terms of its independence number.  We consider such a superclass of $2K_2$-free graphs, namely $(P_3\cup P_2)$-free graphs. We shall prove the following result, a proof of which is given in Section~\ref{Sect:P3P2-free}.

\begin{theorem}
\label{thm:main2}
If $G$ is a $(P_3\cup P_2)$-free graph, then $\gamma_s(G) \le \alpha(G) + 1$.
\end{theorem}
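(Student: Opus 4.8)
The plan is to reduce to connected graphs and then analyze a maximum independent set. I would first dispose of the disconnected case: since $P_3\cup P_2$ is disconnected, at most one component of $G$ can fail to be complete, and if some component is not complete then every other component is a single vertex (otherwise an induced $P_3$ in the non-complete component together with an induced $P_2$ in another component yields an induced $P_3\cup P_2$). As $\gamma_s(K_n)=\alpha(K_n)=1$ and both $\gamma_s$ and $\alpha$ are additive over components, it then suffices to prove $\gamma_s(G)\le\alpha(G)+1$ for connected $G$; and if $G$ is complete the bound is trivial, so I may assume $G$ is connected and not complete.

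Next I would fix a maximum independent set $I$, and for each $x\in I$ let $U_x$ be the set of vertices of $V(G)\setminus I$ whose unique neighbor in $I$ is $x$ (every vertex outside $I$ has a neighbor in $I$ since $I$ is maximal), and put $U=\bigcup_{x\in I}U_x$. A first, easy point is that each $U_x$ is a clique: two non-adjacent vertices of $U_x$ could be added to $I\setminus\{x\}$ to contradict the maximality of $I$. Call $v\in V(G)\setminus I$ \emph{problematic} if $|N(v)\cap I|\ge 2$ and, for every $u\in N(v)\cap I$, some vertex of $U_u$ lies outside $N[v]$. Exchanging $v$ for a neighbor $u\in I$ keeps $I$ dominating exactly when every vertex whose only $I$-neighbor is $u$ lies in $N[v]$; combining this with the fact that each $U_x$ is a clique, one checks readily that if $G$ has \emph{no} problematic vertex then $I$ is itself a secure dominating set, so $\gamma_s(G)\le\alpha(G)$. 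Hence it remains to handle graphs possessing a problematic vertex.

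So suppose $G$ has a problematic vertex $v$, and suppose for contradiction that $\gamma_s(G)\ge\alpha(G)+2$. Then $|I\cup\{v\}|=\alpha(G)+1$ is too small to be secure, so $I\cup\{v\}$ is not a secure dominating set; locating the offending vertex — vertices of $U$ are still fine, and any vertex outside $I\cup\{v\}$ that is adjacent to $v$ can be exchanged with $v$ since $I\cup\{v'\}$ stays dominating — produces a vertex $v'\notin I\cup\{v\}$, non-adjacent to $v$, with $|N(v')\cap I|\ge 2$, such that for every $u'\in N(v')\cap I$ there is $w_{u'}\in U_{u'}$ adjacent to neither $v$ nor $v'$. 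Writing $A=N(v)\cap I$ and $A'=N(v')\cap I$, the easy sub-case is $A\not\subseteq A'$ and $A'\not\subseteq A$: choosing $u_1\in A\setminus A'$ and $u'\in A'\setminus A$, the vertices $v,u_1,w_{u'},u',v'$ are distinct, the set $\{w_{u'},u',v'\}$ induces a $P_3$ (with $u'$ central), $vu_1$ is an edge, and checking the six potential cross-edges shows there is none between $\{v,u_1\}$ and $\{w_{u'},u',v'\}$ — an induced $P_3\cup P_2$, a contradiction. This leaves the nested sub-case, where one of $A,A'$ contains the other; here a more involved argument is needed, again playing $(P_3\cup P_2)$-freeness against the maximality of $I$ — e.g.\ the private-neighbor witnesses of a problematic vertex cannot all be chosen pairwise non-adjacent, since otherwise replacing $N(v)\cap I$ in $I$ by those witnesses and adding $v$ would give an independent set that is too large — and iterating refinements of this kind should, after a case analysis, again force a forbidden $P_3\cup P_2$. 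Completing this yields $\gamma_s(G)\le\alpha(G)+1$.

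I expect the nested sub-case $A\subseteq A'$ (or $A'\subseteq A$) to be the main obstacle: there $v$ is adjacent to all of $N(v')\cap I$, so the clean ``split a $P_3$ off a $P_2$'' move from the easy sub-case no longer applies, and one is pushed into careful bookkeeping of the adjacencies among $v$, $v'$, their $I$-neighbors, and the private-neighbor witnesses, leaning repeatedly on the observation that replacing $I$-vertices by suitable witnesses produces new independent sets. A useful preliminary reduction might be to choose $I$, among all maximum independent sets, so as to minimize the number of problematic vertices, which should cut down the cases to consider. Finally, the bound cannot be improved, since $C_5$ is $(P_3\cup P_2)$-free with $\gamma_s(C_5)=3=\alpha(C_5)+1$.
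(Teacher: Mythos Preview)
Your framework is correct up through the non-nested sub-case, but the nested sub-case is not merely ``more involved'' --- it is genuinely unfinishable along the lines you sketch. The obstruction is that your chosen set $S=I\cup\{v\}$ need \emph{not} be secure even when $G$ is $(P_3\cup P_2)$-free and $I$ is a maximum independent set, so no contradiction can be extracted from ``$S$ is not secure'' together with $(P_3\cup P_2)$-freeness and the maximality of $I$ alone. Concretely, take $V(G)=\{a,b,c,v,v',p_a,p_b,p_c\}$ with $I=\{a,b,c\}$; make $v$ adjacent to $a,b,c$; make $v'$ adjacent to $a,b,p_c$; make $p_a$ adjacent to $a,p_b,p_c$; make $p_b$ adjacent to $b,p_a,p_c$; and make $p_c$ adjacent to $c,v',p_a,p_b$ (and no other edges). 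One checks directly that $G$ is connected and $(P_3\cup P_2)$-free, that $\alpha(G)=3$ with $I$ an $\alpha$-set, that $v$ is problematic (each of $a,b,c$ has the private neighbour $p_a,p_b,p_c$ missing $v$), and that $v'$ is \emph{not} $(I\cup\{v\})$-defended: its only $S$-neighbours are $a$ and $b$, and swapping either out leaves $p_a$ or $p_b$ undominated. Here $A=\{a,b,c\}\supsetneq\{a,b\}=A'$, exactly your nested sub-case, and yet there is no induced $P_3\cup P_2$ to find --- so the implication your argument needs is simply false, and the hinted swap-and-replace tricks cannot rescue it. (Of course $\gamma_s(G)\le 4$ still holds here; e.g.\ $I\cup\{p_a\}$ is secure.)

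The paper sidesteps all of this by adding a \emph{different} vertex to $I$: pick any $u\in A_I$ and any $x\in\epn(u,I)$, and set $S=I\cup\{x\}$. The point is that $x$ has exactly one neighbour in $I$, namely $u$, and $\epn(u,S)=\epn(x,S)=\emptyset$, so any non-$S$-defended vertex $v'$ must avoid both $u$ and $x$. Since (as you correctly argue) such a $v'$ has two neighbours $a,b\in I\setminus\{u\}$, the five vertices $\{a,v',b\}\cup\{u,x\}$ immediately induce $P_3\cup P_2$ --- no split into nested and non-nested cases, and no reduction to connected graphs, is needed. In your language: the right move is to augment $I$ by a private-neighbour witness rather than by the problematic vertex itself.
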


 Although the bound of Theorem~\ref{thm:main2} is tight, we show that the bound can be improved for some subclasses of $(P_3\cup P_2)$-free graphs. A proof of the following result is given in Section~\ref{Sect:P3P2-free}.

\begin{theorem}
\label{thm:main3}
If $G$ is an $H$-free graph, where $H \in \{P_3\cup P_1, K_2\cup 2K_1\}$, then $\gamma_s(G) \le \max\{3,\alpha(G)\}$.
\end{theorem}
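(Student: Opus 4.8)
The plan is to prove the stronger statement that if $\alpha(G)\ge 3$ then an $\alpha$-set $I$ of $G$ is itself a secure dominating set, which gives $\gamma_s(G)\le\alpha(G)$; combined with the bound $\gamma_s(G)\le 2\alpha(G)-1\le 3$ from Theorem~\ref{secure-versus-indep} when $\alpha(G)\le 2$, this yields $\gamma_s(G)\le\max\{3,\alpha(G)\}$ in every case and for both choices of $H$. So assume $\alpha(G)=k\ge 3$ and fix an $\alpha$-set $I=\{x_1,\dots,x_k\}$. Being a maximal independent set, $I$ is dominating (Observation~\ref{dom}), so every $v\in V(G)\setminus I$ has a neighbour in $I$. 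For each $i$ let $B_i=\{v\in V(G)\setminus I : N(v)\cap I=\{x_i\}\}$ be the set of private neighbours of $x_i$, put $B=\bigcup_{i=1}^k B_i$, and let $A=(V(G)\setminus I)\setminus B$ be the set of vertices having at least two neighbours in $I$.

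Verifying that $I$ is secure dominating reduces to the following: for every $v\notin I$, find a neighbour $x_a\in I$ of $v$ such that $(I\setminus\{x_a\})\cup\{v\}$ is still dominating. The only vertices that can become undominated after we delete $x_a$ and insert $v$ are $x_a$ itself and the private neighbours $B_a$ of $x_a$; as $x_a$ is dominated by $v$, the swap succeeds exactly when $B_a\subseteq N[v]$. If $v\in B_i$ the unique candidate is $x_a=x_i$, so I need $B_i\subseteq N[v]$, which follows once I show $B_i\cup\{x_i\}$ is a clique: if $v,v'\in B_i$ were non-adjacent, then the induced path $v-x_i-v'$ together with any $x_\ell$ ($\ell\ne i$) would induce $P_3\cup P_1$, contradicting $H$-freeness (in the $K_2\cup 2K_1$-free case I will instead show $B_i=\emptyset$, making this sub-case vacuous). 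The substantive case is $v\in A$.

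The two choices of $H$ supply slightly different structure. If $G$ is $(K_2\cup 2K_1)$-free, every $v\notin I$ has at least $k-1$ neighbours in $I$: otherwise $v$ misses two vertices $x_i,x_j$ of $I$, and choosing a neighbour $x_\ell\in I$ of $v$ makes $\{x_\ell,v,x_i,x_j\}$ induce $K_2\cup 2K_1$. Since $k\ge 3$ this forces $|N(v)\cap I|\ge 2$ for every $v\notin I$, hence $B=\emptyset$, and for any $v\notin I$ and any neighbour $x_a\in I$ of $v$ each other external vertex retains a neighbour in $I\setminus\{x_a\}$; thus the swap always works and $I$ is secure dominating. If $G$ is $(P_3\cup P_1)$-free, then (using $k\ge 3$) a vertex with two neighbours $x_i,x_j$ in $I$ is adjacent to all of $I$, since for each remaining $x_\ell$ the induced path $x_i-v-x_j$ forces $x_\ell\in N(v)$; so $A$ is exactly the set of vertices adjacent to every vertex of $I$. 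Moreover, again using $k\ge 3$, there is no edge between $B_i$ and $B_j$ when $i\ne j$: an edge $u_iu_j$ with $u_i\in B_i$ and $u_j\in B_j$ makes $u_i-u_j-x_j$ an induced $P_3$ which, together with any $x_\ell$ with $\ell\notin\{i,j\}$, induces $P_3\cup P_1$.

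The main obstacle is the case $v\in A$ in the $(P_3\cup P_1)$-free setting, which I handle by contradiction. Suppose no neighbour $x_a$ of $v$ works; then $B_a\not\subseteq N[v]$ for every $a$, so in particular each $B_a$ is nonempty and contains some $u_a\notin N(v)$. Choose $u_1\in B_1$ and $u_2\in B_2$ with $u_1,u_2\notin N(v)$; by the last observation $u_1$ and $u_2$ are non-adjacent, and then $\{u_1,x_1,v,u_2\}$ induces the path $u_1-x_1-v$ (note $x_1\in N(v)$ because $v\in A$) plus the isolated vertex $u_2$ — an induced $P_3\cup P_1$, a contradiction. Hence some $x_a$ works, $I$ is a secure dominating set, and $\gamma_s(G)\le\alpha(G)$. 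I expect the only finicky bookkeeping to be in the $v\in B_i$ case, where one must remember that $v$ itself joins the new dominating set so that only $B_i\setminus\{v\}$ needs to be covered, and in noting that nothing above uses connectivity, so the argument applies to disconnected $G$ as well.
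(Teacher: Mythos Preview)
Your proof is correct and follows essentially the same strategy as the paper: dispose of $\alpha(G)\le 2$ via a general upper bound, and for $\alpha(G)\ge 3$ show that an $\alpha$-set $I$ is itself secure dominating by exploiting the structural dichotomy on $N(v)\cap I$. The only cosmetic differences are that the paper invokes Theorem~\ref{thm:main2} rather than Theorem~\ref{secure-versus-indep} for the small-$\alpha$ case, and in the $(P_3\cup P_1)$-free contradiction for $v\in A$ it uses the witness $\{b,v,c,x\}$ with $b,c\in I$ and $x\in B_a\setminus N(v)$, which sidesteps your auxiliary observation that $[B_i,B_j]=\emptyset$.
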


We also investigate a subclass of the $P_5$-free graphs, namely connected $(P_5,H)$-free graphs where $H \in \{\paw, C_4\}$. By using the structural properties of such graphs, we shall prove the following results, proofs of which are given in Sections~\ref{Sect:P5C3-free} and~\ref{Sect:P5C4-free}, respectively.

\begin{theorem}
\label{thm:main4}
If $G$ is a connected $(P_5,\paw)$-free graph, then $\gamma_s(G) \le \max\{3,\alpha(G)\}$.
\end{theorem}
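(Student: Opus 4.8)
The plan is to exploit the known structure of connected $(P_5,\paw)$-free graphs. A paw is a triangle with a pendant edge; a graph is $\paw$-free if and only if every connected component is either triangle-free or complete multipartite (Olariu's theorem). Combined with being $P_5$-free and connected, this forces $G$ to be either (i) a connected $(P_5,C_3)$-free graph — equivalently a connected $(P_5,\text{triangle})$-free graph — or (ii) a complete multipartite graph. Case (ii) is easy: if $G=K_{n_1,\dots,n_k}$ with parts $V_1,\dots,V_k$ and we may assume $n_1\ge n_2\ge\cdots$, then $\alpha(G)=n_1$, and one checks directly that picking one vertex from each of two distinct parts (when $k\ge2$) together with, if needed, enough vertices of $V_1$ gives a secure dominating set of size $\max\{3,\alpha(G)\}$ or less — indeed, two vertices in distinct parts already dominate everything, and the swap condition is met because any vertex is adjacent to a guard in a different part whose removal still leaves a dominating pair; for a complete graph $K_n$ we have $\alpha(G)=1$ and $\gamma_s(G)\le3$ always. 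So the bulk of the work is Case (i).

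For Case (i), I would invoke (or re-derive) the structure of connected $(P_5,C_3)$-free graphs. Triangle-free $P_5$-free connected graphs have a very restricted form: by a result of Bacsó–Tuza type / the structure of $P_5$-free triangle-free graphs, such a graph $G$ has a dominating vertex or a dominating edge, or more precisely one can show $G$ has a dominating complete bipartite subgraph. Actually the cleanest route: a connected $P_5$-free graph has a connected dominating set inducing either a clique or a $P_3$ (Bacsó–Tuza / Cozzens–Kelleher), and when $G$ is also triangle-free the dominating clique degenerates to a vertex or an edge, so $G$ has either a dominating vertex $v$ or a dominating edge $uv$. If $G$ has a dominating vertex $v$, then $\{v\}$ together with one more vertex (or the construction for $2K_2$-free/small cases) yields $\gamma_s(G)\le3$, and in fact one can often do better; if $G$ has a dominating edge $uv$, one takes $\{u,v\}$ plus possibly one vertex from $N(u)\cap N(v)$-analysis, checking the swap condition against $\alpha(G)$.

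The key step, and the main obstacle, is verifying the secure-domination swap condition in Case (i) once the dominating vertex/edge is fixed: for each $w\in V(G)\setminus S$ we must produce a guard $s\in S\cap N(w)$ with $(S\setminus\{s\})\cup\{w\}$ again dominating, and this is where the $P_5$-freeness must be used quantitatively to bound how many extra vertices $S$ needs beyond the dominating vertex/edge — roughly, $P_5$-freeness limits the ``spread'' of $N(v)$ so that a constant-size $S$ works unless $\alpha(G)$ is large, in which case the bound $\max\{3,\alpha(G)\}$ has slack to absorb an independent set of private neighbors. I would handle this by a case analysis on $|N(u)\cap N(v)|$ and on whether the non-neighbors of the dominating set form an independent set, pushing any leftover vertices into $S$ and charging them against $\alpha(G)$ via a maximal independent set argument (Observation~\ref{dom}). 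The delicate sub-case is when $G$ is triangle-free with a dominating edge and there are several vertices seen only through $u$ or only through $v$: here one shows these, together with $u$ or $v$, build an independent set nearly as large as whatever we add to $S$, keeping the total at most $\max\{3,\alpha(G)\}$.
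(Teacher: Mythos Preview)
Your high-level reduction via Olariu's theorem is correct and matches the paper exactly: a connected paw-free graph is either complete multipartite (where Cockayne et al.\ already give $\gamma_s\le\alpha$) or triangle-free, so the real content is the connected $(P_5,C_3)$-free case. But your treatment of that case has a genuine gap. From Bacs\'o--Tuza you correctly quote that a connected $P_5$-free graph has a dominating clique \emph{or} a dominating $P_3$, yet you then silently drop the $P_3$ alternative and conclude that ``$G$ has either a dominating vertex $v$ or a dominating edge $uv$''. This is false: $C_5$ itself, and more generally every expansion of $C_5$ by independent sets, is connected $(P_5,C_3)$-free with no dominating vertex and no dominating edge. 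Two further slips confirm that the argument is not under control: in the dominating-vertex sub-case a triangle-free $G$ with a dominating vertex is a star $K_{1,m}$, for which $\gamma_s=m$, not $\le 3$ as you claim (the bound survives only because $\alpha=m$ as well); and in the dominating-edge sub-case you invoke $N(u)\cap N(v)$, which is empty since $uv\in E(G)$ and $G$ is triangle-free.

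The paper does not use Bacs\'o--Tuza at all. For the connected $(P_5,C_3)$-free case it splits on whether $G$ contains an induced $C_5$. If not, Theorem~\ref{cycle} gives $\gamma_s\le\alpha$ immediately. If $G$ does contain an induced $C_5$, the paper shows (Lemma~\ref{c3cl-1} and Claim~\ref{cl(b)}) that $G$ is \emph{exactly} the expansion of a $5$-cycle: $V(G)=U_1\cup\cdots\cup U_5$ with each $U_i$ independent, $[U_i,U_{i+1}]$ complete, and $[U_i,U_{i+2}]=\emptyset$. With this rigid structure in hand, explicit secure dominating sets of size at most $\max\{3,\alpha(G)\}$ are written down by a short case analysis on the $|U_i|$'s (Claims~\ref{cl(c)}--\ref{cl(d)} and the proof of Theorem~\ref{p5c3thm}). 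The dominating-$P_3$ branch that you dropped is precisely where this $C_5$-expansion structure lives; without recovering it, your ``extend the dominating set to a secure one and charge the extras against $\alpha$'' plan cannot be carried out.
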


\begin{theorem}
\label{thm:main5}
If $G$ is a connected $(P_5,C_4)$-free graph, then $\gamma_s(G) \le \max\{3,\alpha(G)\}$.
\end{theorem}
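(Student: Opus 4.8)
The plan is to exploit the structure of connected $(P_5, C_4)$-free graphs via a maximum independent set. Let $I$ be an $\alpha$-set of $G$; if $\alpha(G) \le 3$ the bound $\gamma_s(G) \le 3$ would follow once we exhibit a secure dominating set of size $3$ (or verify small cases directly), so we may assume $\alpha(G) = \alpha \ge 4$ and aim for $\gamma_s(G) \le \alpha$. By Observation~\ref{dom}, any maximal independent set is dominating, so $I$ itself dominates $G$; the work is entirely in upgrading $I$ (or a modification of it of the same size) to a \emph{secure} dominating set. The first step is to understand the "private neighbor" structure: for a vertex $v \notin I$, we need some $u \in I \cap N(v)$ for which $(I \setminus \{u\}) \cup \{v\}$ still dominates, i.e. $v$ dominates every external private neighbor of $u$ with respect to $I$. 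So I would classify each $v \notin I$ by whether it already admits such a swap partner $u$; call $v$ \emph{bad} otherwise, and the goal becomes showing the bad vertices can be absorbed without increasing the set beyond $\alpha$.

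Second, I would extract the forbidden-subgraph leverage. $C_4$-freeness means that any two vertices of $I$ have at most one common neighbor outside $I$; equivalently, the bipartite "neighborhood" incidence between $V(G) \setminus I$ and $I$ has no $C_4$, so each $v \notin I$ that is "useful" is pinned down sharply. $P_5$-freeness, combined with connectedness, forces $G$ to have a dominating structure of bounded diameter-type behaviour on $I$: in particular a classical fact is that a connected $P_5$-free graph has a dominating clique or a dominating $P_3$ (Cozzens–Kelleher / Bacsó–Tuza), which I would invoke to locate a small set $K$ that dominates $G$. I expect the main case split to be: (i) $G$ has a dominating clique $K$ — then since $K$ is a clique, $|K| \le \omega(G)$ and one typically wins easily because cliques give abundant swap partners; (ii) $G$ has a dominating $P_3 = x\text{-}y\text{-}z$ but no dominating clique. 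In case (ii) I would look at how $I$ distributes among $N(x), N(y), N(z)$ and use $C_4$-freeness to limit overlaps, building the secure dominating set by taking $\{x,y,z\}$ together with carefully chosen representatives of the parts of $I$ not yet handled, checking the swap condition part by part.

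The main obstacle I anticipate is the interaction between the two goals of domination and secure-swap feasibility in case (ii): a vertex $v \notin I$ with a unique neighbor $u$ in $I$ (forced to be somewhat common by $C_4$-freeness limiting alternatives) may fail the swap because $u$ has an external private neighbor $w$ not adjacent to $v$ — and then $P_5$-freeness must be used to show $v, u, w$ and the dominating $P_3$ cannot all coexist, or else to show $w$ can be rerouted. Pinning down exactly which vertices are "bad" and proving their number (after the structural reduction) is at most the slack $\alpha(G) - |\{x,y,z\}|$ available is the delicate counting core. I would handle it by a discharging-style or direct injective argument: assign each bad vertex to a distinct vertex of $I$ it forces into the set, using $C_4$-freeness to guarantee the assignment is injective, and using $P_5$-freeness to rule out the configurations where too many bad vertices would cluster. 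Finally I would reconcile with the $\alpha(G) \le 3$ regime by a short direct argument (e.g. a dominating vertex or a dominating edge plus one extra vertex gives a secure dominating set of size $\le 3$), completing the bound $\gamma_s(G) \le \max\{3, \alpha(G)\}$.
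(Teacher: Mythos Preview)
Your proposal is a high-level plan rather than a proof, and it rests on at least one false claim and several steps that are not obviously completable. The assertion that $C_4$-freeness forces any two vertices of $I$ to have at most one common neighbor outside $I$ is incorrect: if $a,b\in I$ have common neighbors $x,y$ with $xy\in E(G)$, then $\{a,x,b,y\}$ does not induce a $C_4$. What $C_4$-freeness actually yields is only that the common neighborhood of $a$ and $b$ is a clique, a far weaker statement that does not support the injective assignment you sketch. The Bacs\'o--Tuza dominating-clique/$P_3$ dichotomy uses only $P_5$-freeness, and in the dominating-clique case your remark that ``one typically wins easily because cliques give abundant swap partners'' is unjustified: a small dominating clique gives domination, not the per-vertex swap condition, and there is no reason its size is related to $\alpha(G)$ in a helpful way. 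The discharging step (``assign each bad vertex to a distinct vertex of $I$ \ldots using $C_4$-freeness to guarantee injectivity'') relies precisely on the false common-neighbor claim and is left entirely unspecified. The $\alpha(G)\le 3$ case is also not handled: a connected $(P_5,C_4)$-free graph with $\alpha=2$ can be a complete buoy (a clique blow-up of $C_5$), where no dominating edge exists and one must actually produce a size-$3$ secure dominating set by hand.

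The paper's argument is structurally different and does not proceed via a maximum independent set at all. It invokes Fouquet's theorem for connected $(P_5,C_4)$-free graphs (Theorem~\ref{p5c4}), which partitions $V(G)$ into a chordal part $V_1$ and a part $V_2$ that is a disjoint union of homogeneous sets, each inducing a maximal complete buoy whose open neighborhood is a clique in $V_1$. The proof is then by induction on $|V(G)|$: if $V_2=\emptyset$ the graph is $C_5$-free and Theorem~\ref{cycle} applies; otherwise one deletes a single clique $A_5$ from a buoy, applies the induction hypothesis to the remaining connected graph $H_2$, and shows by a short case analysis (on how a minimum secure dominating set $S_{H_2}$ meets the buoy $B$ and $N(B)$) that $S_{H_2}$ can be modified, without increasing its size, into a secure dominating set of $G$. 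Lemma~\ref{buoy} pins down the unique obstruction $\gamma_s(H_2)>\alpha(H_2)$ and is used to rule it out in the inductive step. None of this buoy structure appears in your plan, and I do not see how to reach the bound along your proposed lines without it.
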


We summarize our results in Table~\ref{bound-table}. In this table, we present upper bounds on the secure domination number, $\gamma_s(G)$, of a graph $G$ depending on the structural properties of the graph $G$.

\medskip
\begin{table}[htb]
		\begin{center}
			\setlength{\arrayrulewidth}{0.2mm}
			\begin{tabular}{ |l|c|c|}
				\hline
				{\bf Graph Class $\mathcal{C}$} & {\bf $\gamma_s$-bound for $G\in \mathcal{C}$} & {\bf reference} \\
				\hline
				General graphs & $2\alpha(G)-1$&  \cite{merouane15} \\
				\hline
				Claw-free graphs & $\frac{3}{2}\alpha(G)$&  \cite{Cockayne03} \\
				\hline
				$C_3$-free graphs & $\frac{3}{2}\alpha(G)$&  \cite{merouane15} \\
				\hline
				Paw-free graphs & $\frac{3}{2}\alpha(G)$ & [Corollary \ref{Paw}] \\
				\hline
				Bipartite graphs & $\alpha(G)$ &  \cite{merouane15} \\
				\hline
				$C_5$-free graphs & $\alpha(G)$&  \cite{degawa23} \\
				\hline
				$P_5$-free graphs & $\frac{3}{2}\alpha(G)$ & [Theorem~\ref{thm:main1}] \\
				\hline
				$(P_3\cup P_2)$-free graphs & $\alpha(G)+1$ & [Theorem~\ref{thm:main2}] \\
				\hline
				$(P_3\cup P_1)$-free graphs & $\max\{3,\alpha(G)\}$ & [Theorem \ref{P3UP1}] \\
				\hline
				$(K_2\cup 2K_1)$-free graphs &  $\max\{3,\alpha(G)\}$ & [Theorem \ref{K2U2K1}] \\
				\hline
				Connected $(P_5,\paw)$-free graphs & $\max\{3,\alpha(G)\}$ &  [Theorem~\ref{thm:main4}] \\
				\hline
				Connected $(P_5,C_4)$-free graphs & $\max\{3,\alpha(G)\}$ &  [Theorem~\ref{thm:main5}] \\
				\hline
			\end{tabular}\\
			\label{bound-table}
			\vspace{0.2cm}
			\caption{Upper bounds on the secure domination number of a graph $G$}
		\end{center}
\end{table}

\section{Notation, terminology and preliminary results}
\label{prelim}
	
Let $G$ be a graph. The \emph{(open) neighborhood} of a vertex $v\in V(G)$, denoted by $N_G(v)$, is the set of neighbors of $v$ in $G$. The \emph{closed neighborhood} of a vertex $v\in V(G)$ is the set $N_G[v]=N_G(v)\cup \{v\}$. For a set $X\subseteq V(G)$, the \emph{(open) neighborhood} of $X$, denoted by $N_G(X)$, is the set $\{v\in V(G)\setminus X \, \colon v \text{ is adjacent to a vertex in } X\}$. The \emph{closed neighborhood} of $X$ is the set $N_G[X]= N_G(X)\cup X$. We simply use $N(v), N[v], N(X)$, and $N[X]$ to denote $N_G(v), N_G[v], N_G(X),$ and  $N_G[X]$, respectively when the context of the graph $G$ is clear. We use $G[X]$ to denote the subgraph of $G$ induced by the vertices in $X$. A vertex $v$ dominates itself and all its neighbors. A set $X\subseteq V(G)$ is called a \emph{clique} of $G$ if every pair of vertices of $X$ are adjacent. For disjoint subsets $X$ and $Y$ of $V(G)$, $G[X,Y]$ denotes the set of edges $\{e\in E(G)  \, \colon e$ has one endpoint in $X$ and the other endpoint in $Y\}$. If the graph $G$ is clear from the context, we write $[X,Y]$ rather than $G[X,Y]$. We say that $G[X,Y]$ is complete if every vertex in $X$ is adjacent to every vertex in $Y$ in $G$. For a given positive integer $k$, we use the notation $[k]$ to denote the set $\{1,\ldots, k\}$.

For a set $D \subseteq V(G)$ and a vertex $u \in D$, the \emph{$D$-external private neighborhood} of $u$, denoted by $\epn(u,D)$, is the set $\{v \in V(G) \setminus D  \, \colon  N(v) \cap D = \{u\}\}$. A vertex in $\epn(u,D)$ is a \emph{$D$-external private neighbor} of $u$. We say that a vertex $v\in V(G)\setminus D$ is $D$-\emph{defended} if there exists a vertex $u \in N(v)\cap D$ such that $(D\setminus \{u\})\cup \{v\}$ is a dominating set of $G$. Specifically, we say that $v\in V(G)\setminus D$ is $D$-\emph{defended by a vertex} $u$ if $u\in N(v)\cap D$ and $(D\setminus \{u\})\cup \{v\}$ is a dominating set of $G$. We note that $D$ is a secure dominating set of $G$ if and only if every vertex in $V(G) \setminus D$ is $D$-defended. We also note that if $D$ is a dominating set of $G$ and $\epn(u,D)=\emptyset$ for some $u\in D$, then every neighbor of $u$ in $V(G)\setminus D$ is $D$-defended by $u$. We state this observation formally as follows.
	
\begin{obs}
\label{epnempty}
If $D$ is a dominating set of a graph $G$ and $\epn(u,D)=\emptyset$ for some vertex $u \in D$, then every neighbor of $u$ that belongs to $V(G)\setminus D$ is $D$-defended by $u$.
\end{obs}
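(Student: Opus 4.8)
The plan is to prove the statement directly from the definition of a dominating set by a short case analysis. Fix a vertex $u \in D$ with $\epn(u,D)=\emptyset$ and let $v$ be an arbitrary neighbor of $u$ with $v \in V(G)\setminus D$. Set $D' = (D\setminus\{u\})\cup\{v\}$; the goal is to show that $D'$ is a dominating set of $G$, which is precisely the assertion that $v$ is $D$-defended by $u$.

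To this end I would take an arbitrary vertex $w\in V(G)$ and show that it is dominated by $D'$. If $w\in D'$ there is nothing to do. If $w=u$, then since $uv\in E(G)$ and $v\in D'$, the vertex $w$ is dominated by $D'$. The remaining case is $w\in V(G)\setminus D$ with $w\neq v$: since $D$ dominates $G$, the vertex $w$ has a neighbor in $D$; if that neighbor can be chosen in $D\setminus\{u\}$, then $w$ is dominated by $D'\supseteq D\setminus\{u\}$, and otherwise the only neighbor of $w$ in $D$ is $u$, i.e.\ $N(w)\cap D=\{u\}$, which would place $w$ in $\epn(u,D)$, contradicting the hypothesis $\epn(u,D)=\emptyset$. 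Hence every vertex of $G$ is dominated by $D'$, so $D'$ is a dominating set of $G$ and $v$ is $D$-defended by $u$.

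There is essentially no obstacle in this argument; the only point that requires a little care is to treat the vertex $u$ separately, since it leaves the set when one passes from $D$ to $D'$ and must be re-dominated by the newly inserted vertex $v$. Every other vertex outside $D$ is handled uniformly, using only the hypothesis that $u$ has no $D$-external private neighbor.
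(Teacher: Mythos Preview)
Your proof is correct and follows the natural approach implicit in the paper; the paper states this as an observation without a detailed proof, and your case analysis simply unpacks the definition exactly as intended.
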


As a consequence of Observation~\ref{epnempty}, if $D$ is a dominating set of $G$ and $\epn(u,D)=\emptyset$ for every $u\in D$, then every vertex in $V(G) \setminus D$ is $D$-defended, implying that $D$ is a secure dominating set of $G$. For a given dominating set $D$ of $G$, we partition $D$ as $A_D\cup B_D$, where the sets $A_D$ and $B_D$ are defined as follows.
\begin{align*}
\tag{$\mathcal{P}_1$}
\label{indeppart}
    \bullet	\, &  \, A_D= \{u\in D \, \colon  \text{ there exists a neighbor of } u \text{ in } V(G)\setminus D \text{ that is not } D\text{-defended.}\} \\
    \bullet	\, &  \,  B_D=D\setminus A_D.
\end{align*}

We now prove the following useful lemma. We remark that some of the facts that appear in the following lemma have appeared implicitly in several papers (see~\cite{Cockayne03,merouane15}).

\begin{lemma}
\label{mainthmlem-1}	
Let $D$ be a dominating set of $G$ and $A_D\cup B_D$ be a partition of $D$ as defined in~{\rm (}\ref{indeppart}{\rm )}. \\[-24pt]
\begin{enumerate}
\item If $A_D=\emptyset$, then $D$ is a secure dominating set of $G$.
\item If a vertex $v$ in $V(G)\setminus D$ is not $D$-defended, then $N(v)\cap D\subseteq A_D$.
\item If $u$ is a vertex in $A_D$, then $\epn(u,D)\ne  \emptyset$.
\item If a vertex $v$ in $V(G)\setminus D$ is not $D$-defended, then for every neighbor $u$ of $v$ in $D$, the vertex $v$ has a non-neighbor in $\epn(u,D)$, that is, $\epn(u,D) \setminus N(v) \ne \emptyset$.
\item If $D'$ is a superset of $D$, then $A_{D'}\subseteq A_D$ and $B_D\subseteq B_{D'}$, where $A_{D'}\cup B_{D'}$ is a partition of $D'$ as defined in~{\rm (}\ref{indeppart}{\rm )}.
\end{enumerate}
\end{lemma}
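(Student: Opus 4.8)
The plan is to establish the five items essentially one at a time, each by unwinding the relevant definitions (namely of a vertex being $D$-defended, of the sets $A_D$ and $B_D$, and of $\epn(u,D)$), together with the elementary principle that any superset of a dominating set is again a dominating set.

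Item (a) is immediate from the definition of $A_D$: if $A_D=\emptyset$, then no vertex of $D$ has a neighbor in $V(G)\setminus D$ that fails to be $D$-defended; since $D$ is dominating, every vertex of $V(G)\setminus D$ has a neighbor in $D$, and hence every vertex of $V(G)\setminus D$ is $D$-defended, so $D$ is a secure dominating set. Item (b) is equally direct: if $v\in V(G)\setminus D$ is not $D$-defended and $u\in N(v)\cap D$, then $v$ witnesses that $u$ has a non-$D$-defended neighbor in $V(G)\setminus D$, so $u\in A_D$. For item (c) I would argue by contraposition via Observation~\ref{epnempty}: if $\epn(u,D)=\emptyset$, then every neighbor of $u$ in $V(G)\setminus D$ is $D$-defended by $u$, and hence $u\notin A_D$.

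The two parts with genuine content are (d) and (e). For (d), fix a vertex $v\in V(G)\setminus D$ that is not $D$-defended and a vertex $u\in N(v)\cap D$; then $(D\setminus\{u\})\cup\{v\}$ is not a dominating set. The key step is to pin down which vertices it can fail to dominate: the vertices outside this set are exactly $u$ together with the vertices of $(V(G)\setminus D)\setminus\{v\}$. Since $uv\in E(G)$ and $v$ lies in the new set, $u$ is dominated by $v$; hence some $x\in (V(G)\setminus D)\setminus\{v\}$ is undominated by the new set, which forces $x$ to be non-adjacent to $v$ and to have no neighbor in $D\setminus\{u\}$. As $D$ dominates $x\notin D$, the unique neighbor of $x$ in $D$ must be $u$, that is, $x\in\epn(u,D)$ with $x\notin N(v)$, so $\epn(u,D)\setminus N(v)\ne\emptyset$. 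For (e), let $D'\supseteq D$. To show $A_{D'}\subseteq A_D$, take $u\in A_{D'}$ together with a non-$D'$-defended neighbor $v\in V(G)\setminus D'\subseteq V(G)\setminus D$; if $v$ were $D$-defended, say by $w\in N(v)\cap D$, then $(D\setminus\{w\})\cup\{v\}$ is dominating, and so is its superset $(D'\setminus\{w\})\cup\{v\}$ (here $w\in D\subseteq D'$ and $v\notin D'$), contradicting that $v$ is not $D'$-defended. Hence $v$ is not $D$-defended and $u\in A_D$. Then $B_D\subseteq B_{D'}$ follows formally, since $B_D=D\setminus A_D\subseteq D'\setminus A_{D'}=B_{D'}$ using $D\subseteq D'$ and $A_{D'}\subseteq A_D$.

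I expect the main obstacle to be bookkeeping rather than any conceptual difficulty: in (d) one must check carefully that $u$ itself is genuinely re-dominated by $v$, so that the witness of non-domination is forced to be an external private neighbor of $u$ that is non-adjacent to $v$; and in (e) one must keep precise track of which of $D$, $D'$, $A_D$, $A_{D'}$ a given vertex lies in. Both become routine once one invokes the fact that a superset of a dominating set is again a dominating set.
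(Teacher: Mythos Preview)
Your treatment of (a)--(d) is correct and essentially matches the paper, which dismisses (a)--(c) in a single line and proves (d) by the contrapositive of your direct argument. Your core idea for (e)---that a non-$D'$-defended vertex $v$ cannot be $D$-defended, since otherwise $(D'\setminus\{w\})\cup\{v\}\supseteq(D\setminus\{w\})\cup\{v\}$ would dominate---is in fact cleaner than the paper's detour through external private neighborhoods.

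There is, however, a real gap in your proof of (e). From ``$v$ is not $D$-defended and $v$ is a neighbor of $u$ in $V(G)\setminus D$'' you jump to $u\in A_D$; but membership in $A_D$ also requires $u\in D$, and a priori you only know $u\in A_{D'}\subseteq D'$, which in principle could place $u$ in $D'\setminus D$. This is precisely the bookkeeping you flagged at the end but did not actually carry out. The paper closes this hole explicitly before anything else: by part~(c) one has $\epn(u,D')\ne\emptyset$, and any $x\in\epn(u,D')\subseteq V(G)\setminus D'\subseteq V(G)\setminus D$ must, since $D$ dominates, have a neighbor in $D\subseteq D'$; but $N(x)\cap D'=\{u\}$, which forces $u\in D$. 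Once you insert this short check, your argument for (e) is complete---and, with your direct superset argument replacing the paper's passage through $\epn(u,D')\subseteq\epn(u,D)$, arguably tidier.
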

\begin{proof}
Parts~(a) and (b) follow by the definition of the sets $A_D$ and $B_D$, while part~(c) follows by Observation~\ref{epnempty}.

To prove part~(d), let $v$ be a vertex in $V(G)\setminus D$ that is not $D$-defended. Suppose, to the contrary, that there exists a neighbor $u \in D$ of $v$ such that $v$ is adjacent to every vertex in $\epn(u,D)$. We note that this also includes the case when $\epn(u,D)=\emptyset$. However, then, $(D\setminus\{u\}) \cup \{v\}$ is a dominating set of $G$, contradicting the fact that the vertex $v$ is not $D$-defended.

To prove part~(e), let $D'$ be a superset of $D$. Since the property of being a dominating set is superhereditary, $D'$ is a dominating set of $G$. We partition $D'$ as $A_{D'}\cup B_{D'}$ as defined in~(\ref{indeppart}). Let $u$ be a vertex in $A_{D'}$. By~(c), $\epn(u,D')\ne \emptyset$. First we show that $u\in D$. Suppose, to the contrary, that $u \notin D$. Since $D$ is a dominating set of $G$, every vertex in $V(G)\setminus D$ is adjacent to a vertex in $D$. Moreover since $D \subseteq D'$, we have $V(G) \setminus D' \subseteq V(G)\setminus D$. This implies that every vertex $x$ in $\epn(u,D')$ is adjacent to a vertex in $D$ that is different than $u$ (since, by supposition, $u \notin D$), contradicting the fact that $\epn(u,D')\ne \emptyset$. Hence, $u \in D$. Since $u \in A_{D'}$, by the definition of $A_{D'}$, there exists a neighbor of $u$ in $V(G)\setminus D'$, say $w$, that is not $D'$-defended. Thus, by part~(d), we have $\epn(u,D') \setminus N(w) \ne \emptyset$. Since $D\subseteq D'$, we have $w\in V(G)\setminus D'\subseteq V(G)\setminus D$ and $\epn(u,D') \subseteq \epn(u,D)$, and so $\epn(u,D) \setminus N(w) \ne \emptyset$, implying that $w$ is not $D$-defended by~$u$. Thus, $u \in A_D$. Since $u$ is an arbitrary vertex in $A_{D'}$, we infer that $A_{D'} \subseteq A_D$. Thus since $D\subseteq D'$, we have $B_D = D \setminus A_D \subseteq D' \setminus A_D \subseteq D'\setminus A_{D'} = B_{D'}$, that is, $B_D\subseteq B_{D'}$.
\end{proof}

Let $I$ is an $\alpha$-set of $G$, and so $I$ is a maximum independent set in $G$. Further, let $u\in I$ such that $\epn(u,I) \ne \emptyset$.  If there exist two vertices $x,y\in \epn(u,I)$ such that $xy\notin E(G)$, then $(I\setminus \{u\})\cup \{x,y\}$ is an independent set larger than $I$, a contradiction. Hence, $\epn(u,I)$ is necessarily a clique. This fact has appeared implicitly in several papers  (see, for example,~\cite{degawa23, merouane15}). We state this property of a maximum independent set formally as follows.

\begin{lemma}[\cite{degawa23, merouane15}]	
\label{epncomp}	
If $I$ is an $\alpha$-set of a graph $G$, then for every $u \in I$ either $\epn(u,I) = \emptyset$ or $\epn(u,I) \ne \emptyset$ and $\epn(u,I)$ is a clique.
\end{lemma}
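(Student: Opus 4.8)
The plan is to argue by contradiction, exploiting the standard augmenting (exchange) property enjoyed by a maximum independent set. Fix a vertex $u \in I$; if $\epn(u,I) = \emptyset$ there is nothing to prove, so assume $\epn(u,I) \neq \emptyset$ and suppose, for contradiction, that $\epn(u,I)$ does not induce a clique. Then there exist two distinct vertices $x, y \in \epn(u,I)$ with $xy \notin E(G)$, and the goal is to build from these an independent set of size $\alpha(G)+1$.

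The key observation is that the definition of the $I$-external private neighborhood forces each of $x$ and $y$ to have $u$ as its \emph{only} neighbor in $I$; in particular $x, y \in V(G) \setminus I$ and neither $x$ nor $y$ is adjacent to any vertex of $I \setminus \{u\}$. I then claim that $I' := (I \setminus \{u\}) \cup \{x,y\}$ is independent. Indeed, any edge of $G$ with both endpoints in $I'$ would either lie inside $I \setminus \{u\}$ (impossible, since $I$ is independent), or join $x$ or $y$ to some vertex of $I \setminus \{u\}$ (impossible, since $x$ and $y$ are private neighbors of $u$), or be the edge $xy$ (impossible, by the choice of $x$ and $y$). Since $x \neq y$ and $x, y \notin I$, we get $|I'| = |I| - 1 + 2 = \alpha(G) + 1$, contradicting the maximality of $I$. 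Hence $\epn(u,I)$ is a clique, which completes the argument.

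I do not expect a genuine obstacle here: the whole proof is a one-line augmenting argument. The only point that needs care is the verification that $I'$ is genuinely independent, and this is immediate once one reads off from the definition of $\epn(u,I)$ that a $D$-external private neighbor of $u$ has no neighbor in $I$ other than $u$. This is precisely the place where we use that $I$ is a \emph{maximum} independent set rather than merely a maximal one.
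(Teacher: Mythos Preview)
Your proof is correct and follows exactly the same augmenting argument as the paper: assume $\epn(u,I)$ is not a clique, pick non-adjacent $x,y\in\epn(u,I)$, and observe that $(I\setminus\{u\})\cup\{x,y\}$ is an independent set of size $\alpha(G)+1$. One small quibble with your closing remark: the fact that $I$ is a \emph{maximum} (rather than merely maximal) independent set is used to derive the contradiction from $|I'|=\alpha(G)+1$, not in verifying that $I'$ is independent.
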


%For a given dominating set $D$ of $G$, we define the following subsets of $V(G)$.
%\begin{align*}\tag{$\mathcal{P}$}\label{indeppart}
%	&A_D= \{u\in D \, \colon  \epn(u,D)\ne  \emptyset \text{ and there exists a neighbor of } u \text{ in } V(G)\setminus D \text{ that is not } D\text{-defended.}\}\\
%	&B_D=\{u\in D \, \colon  \epn(u,D)\ne  \emptyset \text{ and every neighbor of } u \text{ in }  V(G)\setminus D \text{ is } D\text{-defended.}\}\\
%	& C_D=\{u\in D  \, \colon  \epn(u,D)=\emptyset.\}
%\end{align*}
%
%Note that $A_D\cup B_D\cup C_D$ is a partition of $D$. In the following lemma, we show that if a vertex $v\in V(G)\setminus D$ is not $D$-defended, then $N(v)\cap D\subseteq A_D$. So to prove that $D$ is a secure dominating set, it is sufficient to show that $A_D=\emptyset$.

\section{$P_5$-free graphs}
\label{Sect:P5-free}

Our aim in this section is to prove Theorem~\ref{thm:main1}, that is, we prove that if $G$ is a $P_5$-free graph, then $\gamma_s(G) \le \frac{3}{2}\alpha(G)$. We note that this bound can be achieved on the disjoint union of $C_5$'s. Let $G$ be a $P_5$-free graph (possibly disconnected) and $I$ be an $\alpha$-set of $G$. By Observation~\ref{dom}, the set $I$ is a dominating set of $G$. If $|I|=1$, then $G$ is a complete graph and hence $I$ is a secure dominating set of $G$, and so in this case $\gamma_s(G) = 1 = \alpha(G)$. If $|I| \ge 2$ and $I$ is a secure dominating set of $G$, then we have $\gamma_s(G) \le |I| = \alpha(G)$. Hence we may assume that $|I| \ge 2$ and $I$ is not a secure dominating set of $G$, for otherwise, $\gamma_s(G) \le \alpha(G) < \frac{3}{2}\alpha(G)$, as desired. We now describe a procedure that we call ``\sdsp'' that builds a secure dominating set $S$ of $G$ starting with the initial $\alpha$-set $I$ and such that at each stage of the procedure the property that the resulting set, which includes vertices from $V(G) \setminus I$ iteratively to $S$, is a dominating set of $G$ is preserved. For every iteration of the procedure \sdsp, we partition the set $S$ as $A_S\cup B_S$ as defined in (\ref{indeppart}).

\begin{procedure}
\caption{SDS-$P_5$-FREE()}\label{process}
\KwIn{A $P_5$-free graph $G$ and a maximum independent set $I$ of $G$.}
\KwOut{A secure dominating set $S$ of $G$ such that $|S|\le \frac{3}{2}|I|$.}
Initially, set $S=I$ and $i=2$\;
		\While{$i\le |I|$}{
			\eIf{there exists a vertex $v\in V(G)\setminus S$ such that $v$ is not $S$-defended and has exactly $i$ neighbors in $S$}{
				for a neighbor $u$ of $v$ in $S$, pick a vertex $x\in \epn(u,S)\setminus N(v)$\;
				$S=S\cup \{x\}$\;
			}{
				$i=i+1$\;
			}
		}
		\Return{$S$};
\end{procedure}
	
\medskip
\noindent{\bf \large Underlying Ideas:}

The procedure \sdsp iteratively constructs a nested sequence of supersets of $S$ which was initially the $\alpha$-set $I$. The nested sequence of supersets of $S$ is constructed by iteratively including some vertices from $V(G)\setminus S$ into $S$ . Since a superset of a dominating set is also a dominating set, $S$ remains a dominating set of $G$ throughout the procedure. The procedure finds a vertex $v\in V(G)\setminus S$ that is not $S$-defended and has exactly $i$ neighbors in $S$ in Step $3$. First assume that such a vertex~$v$ exists. Since $S$ is a dominating set of $G$, we have $N(v)\cap S\ne  \emptyset$. This ensures the existence of the vertex $u$ in $N(v)\cap S$ in Step~4. By Lemma~\ref{mainthmlem-1}(b), $N(v)\cap S\subseteq A_S$ and hence $u\in N(v)\cap A_S$.  By Lemma~\ref{mainthmlem-1}(d), we infer that $\epn(u,S)\setminus N(v)\ne \emptyset$. This ensures the existence of the vertex $x$ in $\epn(u,S)\setminus N(v)$ in Step $4$. The procedure includes $x$ into $S$ in Step~$5$. We will prove that by the inclusion of $x$ into $S$,  the vertex $v$ and possibly some more vertices in $V(G)\setminus S$ are $S$-defended and the cardinality of $A_S$ is decreased by at least $2$. The above process is repeated until every vertex in $V(G)\setminus S$ that has exactly $i$ neighbors in $S$ is $S$-defended. Now we may assume that the procedure cannot find a vertex $v$ in Step~3. At this stage of the procedure, the value of $i$ is incremented by~$1$ and the above process is repeated. In this way, at the end of the procedure, each vertex in $V(G)\setminus S$ is  $S$-defended and hence $S$ is a secure dominating set of $G$. Finally we prove that upon completion of the procedure, the resulting set $S$ satisfies $|S| \le \frac{3}{2}|I| = \frac{3}{2}\alpha(G)$. We note that one iteration of the above process either updates $S$ to a proper superset or increases the value of $i$ by~$1$.

Before presenting a series of lemmas, we discuss a problematic situation which may arise during the implementation of procedure \sdsp. Let $z$ be a vertex in $V(G)\setminus S$ such that $z$ is not $S$-defended for a maintained set $S$ and has at least $i$ neighbors in $S$. Suppose that $z$ is not selected as the chosen vertex~$v$ in Step $3$ for some iterations in the procedure \sdsp. Further suppose that after those iterations, the vertex $z$ has less than~$i$ neighbors in the updated set $S$ and $z$ is still not $S$-defended. In that case, the procedure may skip $z$ and end up without settling all the remaining vertices that are not $S$-defended. We will show that no such situation occurs by proving that if a vertex $z$ is not $S$-defended for the maintained set $S$, then $|N(z) \cap S| = |N(z)\cap \widetilde{S}|$ for the maintained set $\widetilde{S}$ of any earlier iteration of the procedure.
	
We now present a series of lemmas that will prove helpful when proving Theorem~\ref{thm:main1}.

\begin{lemma}\label{mainthmcl-1}
If $S$ is a dominating set of $G$ and a vertex $v\in V(G) \setminus S$ is not $S$-defended, then $v$ has at least two neighbors in $A_S$.
\end{lemma}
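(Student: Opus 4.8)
The plan is to argue by contradiction, using the standing setup of this section in which $S$ is a superset of the fixed $\alpha$-set $I$ (every set maintained by \sdsp contains $I$). Since $S$ is a dominating set and $v\in V(G)\setminus S$, the vertex $v$ has at least one neighbor in $S$, and by Lemma~\ref{mainthmlem-1}(b) every neighbor of $v$ in $S$ lies in $A_S$. So it suffices to rule out the possibility that $v$ has exactly one neighbor $u$ in $S$, i.e. that $v\in\epn(u,S)$.

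First I would pin $u$ down inside $I$. Since $v\notin S\supseteq I$ and $I$ is a maximal independent set, $v$ has a neighbor in $I\subseteq S$, and this neighbor must be $u$; hence $u\in I$ and $v\in\epn(u,I)$. Next I would record the inclusion $\epn(u,S)\subseteq\epn(u,I)$: if $y\in\epn(u,S)$ then $y\notin I$, and $N(y)\cap I\subseteq N(y)\cap S=\{u\}$ while $u\in N(y)\cap I$, so $N(y)\cap I=\{u\}$. As $\epn(u,I)\ne\emptyset$, Lemma~\ref{epncomp} gives that $\epn(u,I)$, and therefore $\epn(u,S)$, is a clique.

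Now I would use that $v$ is not $S$-defended, in particular not $S$-defended by its unique $S$-neighbor $u$, so $(S\setminus\{u\})\cup\{v\}$ is not a dominating set of $G$. A direct check shows that $(S\setminus\{u\})\cup\{v\}$ dominates every vertex of $S\setminus\{u\}$, the vertex $v$, the vertex $u$ (adjacent to $v$), and every vertex of $V(G)\setminus S$ other than those lying in $\epn(u,S)\setminus(\{v\}\cup N(v))$; hence this last set is nonempty, say $w$ belongs to it. Then $v$ and $w$ are two distinct vertices of the clique $\epn(u,S)$ with $vw\notin E(G)$, a contradiction. Therefore $v$ has at least two neighbors in $S$, all of them in $A_S$ by Lemma~\ref{mainthmlem-1}(b).

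I do not anticipate a serious obstacle; the one trap to avoid is trying to finish via Lemma~\ref{mainthmlem-1}(d), which is vacuous here because $v$ itself lies in $\epn(u,S)\setminus N(v)$ as soon as $N(v)\cap S=\{u\}$. The substance of the argument is the short domination count above, which yields a \emph{second}, genuinely distinct vertex of $\epn(u,S)$ non-adjacent to $v$, clashing with the clique property inherited from the maximum independent set $I$ through Lemma~\ref{epncomp}.
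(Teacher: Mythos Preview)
Your argument is correct and follows the paper's route: assume $v$ has a unique $S$-neighbor $u$, show $u\in I$, use Lemma~\ref{epncomp} to make $\epn(u,I)\supseteq\epn(u,S)$ a clique, and exhibit two non-adjacent members of it. The paper obtains the second vertex by invoking Lemma~\ref{mainthmlem-1}(d); your observation that this is delicate here (since $v$ itself already witnesses $\epn(u,S)\setminus N(v)\ne\emptyset$) is well taken, and your direct domination count cleanly produces a vertex $w\ne v$ where the paper leaves that distinctness implicit.
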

\begin{proof}
Let $S$ be a dominating set of $G$, and let $v$ be a vertex in $V(G) \setminus S$ that is not $S$-defended. By Lemma~\ref{mainthmlem-1}(b), $N(v)\cap S\subseteq A_S$. Also, since $S$ is a dominating set of $G$, the vertex $v$ has at least one neighbor in $S$ and hence in $A_S$. Suppose, to the contrary, that $v$ has exactly one neighbor in $A_S$, say $w$. Since $N(v)\cap S\subseteq A_S$, we have $N(v)\cap S=\{w\}$, that is $v\in \epn(w,S)$. Since $I\subseteq S$, by Lemma~\ref{mainthmlem-1}(e), we have $w\in A_S\subseteq A_I\subseteq I$. By Lemma~\ref{mainthmlem-1}(d), there exists a vertex $w'\in \epn(w,S)$ that is not adjacent to $v$. Now since $I \subseteq S$ and $w\in I$, we have $v,w' \in \epn(w,I)$. However, $I$ is an $\alpha$-set of $G$, and so by Lemma~\ref{epncomp}, we infer that $\epn(w,I)$ is a clique. In particular, $v$ and $w'$ are adjacent, a contradiction. Hence, $v$ has at least two neighbors in $A_S$.
\end{proof}
	
\begin{lemma}\label{mainthmcl-2}
Consider any iteration of the procedure \sdsp. Let $S$ and $S'$ be the maintained sets at the beginning and at the end of an iteration of the procedure \sdsp, respectively.  If a vertex $v\in V(G)\setminus S'$ is not $S'$-defended, then $v$ is not $S$-defended. Moreover, $|N(v)\cap S|=|N(v)\cap S'|$.
\end{lemma}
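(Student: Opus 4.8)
The plan is to consider the two possible effects of an iteration of \sdsp. If the iteration only increments $i$, then $S' = S$ and both assertions hold trivially, so assume the iteration adds a vertex to $S$. Then there is a vertex $w \in V(G)\setminus S$ that is not $S$-defended and has exactly $i$ neighbors in $S$, a chosen neighbor $u \in N(w)\cap S$, and a chosen vertex $x \in \epn(u,S)\setminus N(w)$, with $S' = S \cup \{x\}$. The two facts I will use are that $x \notin S$ (since $\epn(u,S) \subseteq V(G)\setminus S$) and that $S'$ is a dominating set of $G$ (being a superset of the dominating set $S$).

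For the first assertion I would argue the contrapositive: if $v \in V(G)\setminus S'$ is $S$-defended, then $v$ is $S'$-defended. Indeed, let $u' \in N(v)\cap S$ be such that $(S\setminus\{u'\})\cup\{v\}$ dominates $G$. Since $x \notin S$ we have $u' \ne x$, so $(S'\setminus\{u'\})\cup\{v\} = ((S\setminus\{u'\})\cup\{v\})\cup\{x\}$ is a superset of a dominating set and hence dominates $G$; moreover $u' \in N(v)\cap S \subseteq N(v)\cap S'$. Thus $v$ is $S'$-defended, which is the contrapositive of the claim.

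For the second assertion, since $N(v)\cap S' = (N(v)\cap S)\cup(N(v)\cap\{x\})$ and $N(v)\cap S \subseteq N(v)\cap S'$, it suffices to show that $x \notin N(v)$. Suppose to the contrary that $xv \in E(G)$; then $x$ is a neighbor of $v$ lying in $S'$. Since $v$ is not $S'$-defended and $S'$ is a dominating set, Lemma~\ref{mainthmlem-1}(d), applied with $D = S'$ and the neighbor $x$ of $v$, yields a vertex $y \in \epn(x,S')\setminus N(v)$; in particular $y \in V(G)\setminus S'$ and $N(y)\cap S' = \{x\}$. As $S \subseteq S'$ and $x \notin S$, this forces $N(y)\cap S = \emptyset$, while $y \notin S' \supseteq S$, contradicting the fact that $S$ is a dominating set of $G$. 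Hence $x \notin N(v)$, and therefore $|N(v)\cap S| = |N(v)\cap S'|$.

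I expect the only subtlety — and it is a mild one — to lie in the second assertion: one must observe that the vertex $x$ added during the iteration lies outside the earlier set $S$, so that a putative $S'$-external private neighbor $y$ of $x$ would have \emph{no} neighbor in the dominating set $S$ at all, which is impossible. Note that $P_5$-freeness is not needed for this lemma; that hypothesis enters the argument only through Lemma~\ref{mainthmcl-1} and the later counting steps.
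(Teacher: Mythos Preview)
Your proof is correct. The first assertion is argued exactly as in the paper (via the contrapositive, using that a superset of a dominating set dominates). For the second assertion, however, you take a different route. The paper argues abstractly: since $v$ is not $S'$-defended, Lemma~\ref{mainthmlem-1}(b) gives $N(v)\cap S'\subseteq A_{S'}$, and Lemma~\ref{mainthmlem-1}(e) gives $A_{S'}\subseteq A_S\subseteq S$, whence $N(v)\cap S'\subseteq S$ and the two intersections coincide. You instead exploit the concrete form $S'=S\cup\{x\}$ of a single iteration and show directly that $x\notin N(v)$, using Lemma~\ref{mainthmlem-1}(d) to manufacture an $S'$-external private neighbor of $x$ that would then be undominated by $S$. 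Your argument is more hands-on and avoids invoking the $A_S/B_S$ monotonicity of Lemma~\ref{mainthmlem-1}(e); the paper's argument is more uniform (it works verbatim for any superset $S'\supseteq S$, not only $S\cup\{x\}$), which is why it chains immediately into Corollary~\ref{mainthmcl-2-coro}. Either way, as you observe, $P_5$-freeness plays no role here.
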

\begin{proof}
Recall that $S\subseteq S'$. Let $v$ be a vertex in $V(G)\setminus S'$ such that $v$ is not $S'$-defended and $v$ has $k$ neighbors in $S'$. Since $S\subseteq S'$, we have $v\in V(G)\setminus S$. Suppose, to the contrary, that the vertex $v$ is $S$-defended. Then there exists a vertex $u\in S$ such that $(S\setminus \{u\})\cup\{v\}$ is a dominating set. Then, since $S'$ is a dominating set of $G$ and $u\in S\subseteq S'$, the set $(S'\setminus \{u\})\cup\{v\}$ is a dominating set of $G$, and so the vertex $v$ is $S'$-defended, a contradiction. Hence, the vertex $v$ is not $S$-defended. Since $S\subseteq S'$, we have $|N(v)\cap S|\le k$. Now we show that $|N(v)\cap S|\ge k$. Since $v$ is not $S'$-defended, by Lemma~\ref{mainthmlem-1}(b), we have $N(v)\cap S'\subseteq A_{S'}$. Hence all the $k$ neighbors of $v$ in $S'$ are in $A_{S'}$. By Lemma~\ref{mainthmlem-1}(e), $A_{S'}\subseteq A_S$ since $S\subseteq S'$. Thus, $|N(v)\cap A_S|\ge k$. Consequently, $|N(v)\cap S|\ge k$, implying that $|N(v)\cap S|= k=|N(v)\cap S'|$.
\end{proof}
	
We note that the set $S$ considered in Lemma~\ref{mainthmcl-2} is also the set obtained at the end of the previous iteration. So by repeatedly applications of Lemma~\ref{mainthmcl-2}, we infer that if a vertex $v$ is not $S$-defended for the maintained set $S$ of any iteration of the procedure, then $v$ is not $\widetilde{S}$-defended for any maintained set $\widetilde{S}$ of any previous iteration. This yields the following corollary of Lemma~\ref{mainthmcl-2}.
	
\begin{coro}
\label{mainthmcl-2-coro}
Consider any iteration of the procedure \sdsp. Let $S$ be the maintained set at the beginning of the iteration of the procedure \sdsp.  If a vertex $v\in V(G)\setminus S$ is not $S$-defended, then $v$ is not $\widetilde{S}$-defended for any maintained set $\widetilde{S}$ of any previous iteration of the procedure. Moreover, $|N(v)\cap S|=|N(v)\cap \widetilde{S}|$. In particular,  $|N(v)\cap S|=|N(v)\cap I|$.
\end{coro}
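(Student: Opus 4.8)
The plan is to prove the corollary by a downward induction over the iterations of the procedure \sdsp, using Lemma~\ref{mainthmcl-2} as the inductive step. First I would label the maintained sets produced by the procedure as a chain $I = S_0 \subseteq S_1 \subseteq \cdots \subseteq S_m = S$, where $S_j$ denotes the set obtained at the end of the $j$-th iteration of the while-loop (equivalently, the set at the beginning of the $(j+1)$-st iteration), and $S_0 = I$ is the initial $\alpha$-set. In each iteration the procedure either adjoins a single vertex to the current set (Steps 4--5) or merely increments the counter $i$ (the else-branch); in the latter case $S_j = S_{j-1}$ and the corresponding step of the chain is trivial, so we may regard the chain as weakly increasing and the corollary reduces to a telescoping argument along consecutive sets of this chain.

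Now suppose $v \in V(G)\setminus S$ is not $S$-defended, i.e.\ $v$ is not $S_m$-defended, and let $\widetilde{S} = S_k$ be the maintained set at the beginning of some earlier iteration, where $k \le m$. I would apply Lemma~\ref{mainthmcl-2} to the $m$-th iteration, whose beginning set is $S_{m-1}$ and ending set is $S_m$: since $v \in V(G)\setminus S_m$ is not $S_m$-defended, the lemma yields that $v$ is not $S_{m-1}$-defended and that $|N(v)\cap S_{m-1}| = |N(v)\cap S_m|$; in particular $v \in V(G)\setminus S_{m-1}$, so the hypotheses are again in force. Repeating this with the $(m-1)$-st iteration, then the $(m-2)$-nd, and so on down to the $(k+1)$-st iteration — formally an induction on $m - j$ — gives that $v$ is not $S_j$-defended and $|N(v)\cap S_j| = |N(v)\cap S_m|$ for every $j$ with $k \le j \le m$. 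Taking $j = k$ yields that $v$ is not $\widetilde{S}$-defended and $|N(v)\cap S| = |N(v)\cap \widetilde{S}|$, and taking $k = 0$ (so $\widetilde{S} = S_0 = I$) yields the final assertion $|N(v)\cap S| = |N(v)\cap I|$.

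There is no genuine obstacle here: the only point requiring care is the bookkeeping observation that the set at the beginning of one iteration is exactly the set at the end of the previous iteration, so that the conclusions of Lemma~\ref{mainthmcl-2} can be composed, together with the remark that iterations which only increment $i$ contribute degenerate steps $S_j = S_{j-1}$ to the chain. Once this chain of inclusions is set up correctly, the statement is an immediate telescoping consequence of Lemma~\ref{mainthmcl-2}, exactly as indicated in the paragraph preceding the corollary.
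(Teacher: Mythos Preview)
Your proposal is correct and follows exactly the approach sketched in the paper: the corollary is obtained by repeated (downward) applications of Lemma~\ref{mainthmcl-2}, using the bookkeeping observation that the maintained set at the beginning of one iteration coincides with the set at the end of the previous iteration. Your explicit chain $I=S_0\subseteq\cdots\subseteq S_m=S$ and the remark about degenerate steps when only $i$ is incremented simply make this telescoping argument more precise than the paper's one-paragraph justification.
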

	
\begin{lemma}\label{mainthmcl-2.1}
Consider an iteration of the procedure \sdsp. Let $S$ be the maintained set at the beginning of the iteration of the procedure \sdsp, and let $v$, $u$ and $x$ be the vertices obtained in Steps~$3$-$4$. If $S'$ is the proper superset of $S$ obtained at the end of the iteration of the procedure \sdsp, then $u,x\in B_{S'}$ and $v$ is $S'$-defended.
\end{lemma}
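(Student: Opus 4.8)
The plan is to reduce everything to the single claim that both $\epn(x,S')$ and $\epn(u,S')$ are empty, after which all three conclusions follow at once from Observation~\ref{epnempty}: if $\epn(z,S')=\emptyset$ for $z\in\{u,x\}$, then every neighbor of $z$ in $V(G)\setminus S'$ is $S'$-defended by $z$, which immediately places $z$ in $B_{S'}$; and since $v$ is a neighbor of $u$ lying in $V(G)\setminus S'$, the case $z=u$ also yields that $v$ is $S'$-defended (by $u$).

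For $\epn(x,S')$ I would use the general fact that appending a vertex to a dominating set empties its external private neighborhood: any $t\in\epn(x,S')$ lies in $V(G)\setminus S'\subseteq V(G)\setminus S$, so $t$ has a neighbor in the dominating set $S\subseteq S'$; that neighbor would have to be $x$, which is impossible since $x\notin S$. For $\epn(u,S')$ I would first locate $u$ inside the original $\alpha$-set: since $v$ is not $S$-defended and $u\in N(v)\cap S$, Lemma~\ref{mainthmlem-1}(b) gives $u\in A_S$, and since $I\subseteq S$, Lemma~\ref{mainthmlem-1}(e) gives $A_S\subseteq A_I\subseteq I$, so $u\in I$. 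Moreover $\epn(u,S)\subseteq\epn(u,I)$ (because $I\subseteq S$), this set is nonempty by Lemma~\ref{mainthmlem-1}(c) (as $u\in A_S$), and hence by Lemma~\ref{epncomp} the set $\epn(u,I)$, and therefore its subset $\epn(u,S)$, is a clique. Now $x\in\epn(u,S)$ and $S'=S\cup\{x\}$, so any $t\in\epn(u,S')$ would be a vertex of $\epn(u,S)$ distinct from $x$ and, because $x\in S'$, non-adjacent to $x$ --- contradicting that $\epn(u,S)$ is a clique. Hence $\epn(u,S')=\emptyset$.

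The only real subtlety is the chain $u\in A_S\subseteq A_I\subseteq I$ together with $\epn(u,S)\subseteq\epn(u,I)$, which is exactly what lets one transfer the ``private neighbors of an $\alpha$-set vertex form a clique'' property (Lemma~\ref{epncomp}) to the current set $S$; everything else is routine set manipulation, and notably the lemma does not use $P_5$-freeness. I would also record the trivial facts that $x\ne v$ and $v\notin S'$ --- both holding because $v$ has $i\ge 2$ neighbors in $S$ while $N(x)\cap S=\{u\}$ --- so that $v$ genuinely is a neighbor of $u$ in $V(G)\setminus S'$.
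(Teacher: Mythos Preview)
Your proof is correct and follows essentially the same approach as the paper's: both arguments establish that $\epn(u,S')=\epn(x,S')=\emptyset$ (the paper via a direct reproof that $\epn(u,S)$ is a clique using maximality of $I$, you via the containment $\epn(u,S)\subseteq\epn(u,I)$ and Lemma~\ref{epncomp}), and then deduce $u,x\in B_{S'}$ and that $v$ is $S'$-defended. The only cosmetic difference is that the paper closes with Lemma~\ref{mainthmlem-1}(b)--(c) whereas you use Observation~\ref{epnempty}, but these are equivalent routes to the same conclusion.
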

\begin{proof}
For the current value of $i$, the vertex $v$ is not $S$-defended and $v$ has exactly $i$ neighbors in $S$. Recall that $u\in N(v)\cap S$, $x\in \epn(u,S)\setminus N(v)$, and $S'=S\cup\{x\}$. By Lemma~\ref{mainthmlem-1}(b), $N(v) \cap S\subseteq A_S$, and so $u \in N(v)\cap A_S$. First we show that $\epn(u,S)$ is a clique. Suppose, to the contrary, that there exist vertices $a,b \in \epn(u,S)$ that are not adjacent. Since $A_S\subseteq A_I\subseteq I$, we note that $u \in I$. Also, since $a,b\in \epn(u,S)$, the vertices $a$ and $b$ do not have any neighbor in $S\setminus \{u\}$. Hence, $a$ and $b$ do not have any neighbor in $I\setminus \{u\}$ since $I\subseteq S$. Thus, $(I\setminus \{u\})\cup \{a,b\}$ is an independent set larger than $I$, contradicting the maximality of the set $I$. Hence, $\epn(u,S)$ is a clique, and therefore $\epn(u,S) \cup \{u\}$ is a clique. Now since $S'=S\cup \{x\}$, we have $\epn(u,S') = \epn(x,S') = \emptyset$. Thus by Lemma~\ref{mainthmlem-1}(c), we infer that $u,x \notin A_{S'}$. Since $S'=A_{S'}\cup B_{S'}$, we have $u,x\in B_{S'}$. The vertex $v$ is therefore adjacent to a vertex, namely $u$, in $B_{S'}$, and so by Lemma~\ref{mainthmlem-1}(b), $v$ is $S'$-defended.
\end{proof}

\begin{lemma}\label{mainthmcl-3}
Consider an iteration of the procedure \sdsp. Let $S$ be the maintained set at the beginning of the iteration of the procedure \sdsp, and let $v$, $u$ and $x$ be the vertices obtained in Steps~$3$-$4$. If $S'$ is the proper superset of $S$ obtained at the end of the iteration of the procedure \sdsp, then $A_{S'} \subseteq A_S \setminus (N(v)\cap S)$.
\end{lemma}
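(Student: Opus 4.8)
The plan is to prove the two inclusions comprising $A_{S'}\subseteq A_S\setminus(N(v)\cap S)$ separately. The inclusion $A_{S'}\subseteq A_S$ is immediate from Lemma~\ref{mainthmlem-1}(e), since $S\subseteq S'$. Thus the real content is the disjointness $A_{S'}\cap N(v)\cap S=\emptyset$, which I would establish by contradiction. Suppose $w\in A_{S'}\cap N(v)\cap S$. By Lemma~\ref{mainthmcl-2.1}, $u,x\in B_{S'}$ and $v$ is $S'$-defended; in particular $w\ne u$, and by Lemma~\ref{mainthmlem-1}(e) we have $w\in A_{S'}\subseteq A_S\subseteq A_I\subseteq I$, so $u,w\in I$ and hence $u\not\sim w$. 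Since $w\in A_{S'}$, some neighbour $z$ of $w$ in $V(G)\setminus S'$ is not $S'$-defended; then $z\ne v$ (as $v$ is $S'$-defended), and by Lemma~\ref{mainthmlem-1}(b) we have $N(z)\cap S'\subseteq A_{S'}$, so $z\not\sim u$ and $z\not\sim x$ because $u,x\in B_{S'}$.

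The first key step is to show $z\sim v$. Otherwise $x-u-v-w-z$ is an induced $P_5$: the edges $x\sim u$, $u\sim v$, $v\sim w$, $w\sim z$ are present, while $x\not\sim v$ by the choice of $x$, $x\not\sim w$ since $x\in\epn(u,S)$ gives $N(x)\cap S=\{u\}$ and $w\in S\setminus\{u\}$, $u\not\sim w$ since $u,w\in I$, and $x\not\sim z$, $u\not\sim z$, $v\not\sim z$ as just recorded; this contradicts $P_5$-freeness. The second key step is the analogous claim that every $p\in N(z)\cap S'$ is adjacent to $v$: such a $p$ lies in $A_{S'}\subseteq I$, so $p\notin\{u,x\}$ and $p$ is adjacent to neither $u$ nor $x$, whence if also $p\not\sim v$ then $p-z-v-u-x$ is an induced $P_5$, a contradiction. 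Hence $N(z)\cap S'\subseteq N(v)\cap S$, and the latter set has exactly $i$ elements because $v$ was selected in Step~3 with exactly $i$ neighbours in $S$.

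It then remains to show that $z$ has exactly $i$ neighbours in $S'$; for then $N(z)\cap S'=N(v)\cap S$, so $u\in N(z)\cap S'$, that is $z\sim u$, contradicting $z\not\sim u$. By Lemma~\ref{mainthmcl-2}, $z$ is not $S$-defended and $|N(z)\cap S|=|N(z)\cap S'|$, and by Lemma~\ref{mainthmcl-1}, $|N(z)\cap S|\ge 2$; together with $N(z)\cap S'\subseteq N(v)\cap S$ this gives $2\le|N(z)\cap S|\le i$. If $|N(z)\cap S|=j$ with $j<i$, then, since the procedure's index increases by one at a time starting from $2$, at some earlier iteration the else-branch was executed while the index equalled $j$; letting $\widehat S$ denote the set maintained at that iteration, Corollary~\ref{mainthmcl-2-coro} shows $z\notin\widehat S$, $z$ is not $\widehat S$-defended, and $|N(z)\cap\widehat S|=|N(z)\cap S|=j$, contradicting the condition under which the else-branch is taken at index $j$. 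Hence $|N(z)\cap S|=i$, which completes the argument.

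I expect this last step to be the main obstacle. The earlier steps are purely local forbidden-$P_5$ arguments, but ruling out that $z$ has fewer than $i$ neighbours in $S'$ is what actually forces $u\in N(z)$, and it genuinely requires the global bookkeeping of the procedure, using Corollary~\ref{mainthmcl-2-coro} to transport the ``undefended with $j$ neighbours'' configuration back to the earlier iteration at which index $j$ was cleared.
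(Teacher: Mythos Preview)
Your proof is correct and uses the same ingredients as the paper's: the two induced-$P_5$ configurations $x\text{--}u\text{--}v\text{--}w\text{--}z$ and $x\text{--}u\text{--}v\text{--}z\text{--}p$, together with the ``$z$ has at least $i$ neighbours in $S'$'' argument drawn from Corollary~\ref{mainthmcl-2-coro}. The organization differs slightly: the paper first proves $|N(z)\cap S'|\ge i$ (its Claim~1), then uses a counting argument to extract a neighbour $p$ of $z$ with $p\not\sim v$ (its Claim~2), and finally gets the second $P_5$; you instead use the second $P_5$ directly to prove $N(z)\cap S'\subseteq N(v)\cap S$, and then combine the $\ge i$ bound with the $\le i$ bound to force equality and hence $u\in N(z)$, contradicting $z\not\sim u$. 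These are contrapositive rearrangements of one another; your version is arguably a touch cleaner, and your phrasing of the index argument (``the else-branch was executed while the index equalled $j$'') is more precise than the paper's ``$z$ must have been selected as the vertex $v$''.
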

\begin{proof}
Recall that for the current value of $i$, the vertex $v$ is not $S$-defended and $v$ has exactly $i$ neighbors in $S$. We also recall that $u\in N(v)\cap S$, $x\in \epn(u,S)\setminus N(v)$, and $S' = S\cup\{x\}$.  Let $y$ be an arbitrary vertex in $A_{S'}$. To prove the lemma, it is sufficient to show that $y\in A_S\setminus (N(v)\cap S)$. By Lemma~\ref{mainthmlem-1}(e), $A_{S'}\subseteq A_S$ and hence $y\in A_S$. Suppose, to the contrary, that $y\in N(v)\cap S$. Since $y\in A_{S'}$, by the definition of $A_{S'}$, there exists a neighbor of $y$ in $V(G)\setminus S'$, say $z$, that is not $S'$-defended. By Lemma~\ref{mainthmcl-2.1}, the vertex $v$ is $S'$-defended and $u,x\in B_{S'}$, implying that $z \ne  v$. Also, by Lemma~\ref{mainthmlem-1}(b), the vertex $z$ is adjacent to neither $u$ nor $x$.

\begin{claim}\label{lem6-cl1}
The vertex $z$ has at least $i$ neighbors in $S'$.
\end{claim}
\begin{proof}[Proof of Claim~\ref{lem6-cl1}]
By Corollary~\ref{mainthmcl-2-coro}, the vertex $z$ is not $\widetilde{S}$-defended and $|N(z)\cap S'|=|N(z)\cap S|=|N(z)\cap \widetilde{S}|$ for any maintained set $\widetilde{S}$ of any previous iteration in the procedure SDS $P_5$-FREE. For the current value of $i$, if $z$ has less than $i$ neighbors in $S'$, then it must have been selected as the vertex $v$ by the procedure in Step~3 in some previous iteration. Let $\widetilde{S'}$ be the maintained set at the end of that iteration. By Lemma~\ref{mainthmcl-2.1}, the vertex $z$ is $\widetilde{S'}$-defended. Since $\widetilde{S'} \subseteq S'$, the vertex $z$ is therefore $S'$-defended, a contradiction. Hence, $z$ has at least $i$ neighbors in $S'$, as claimed.
\end{proof}

\begin{claim}\label{lem6-cl2}
The vertex $z$ has a neighbor in $A_{S'}$ that is not adjacent to $v$.
\end{claim}
\begin{proof}[Proof of Claim~\ref{lem6-cl2}]
Suppose, to the contrary, that $v$ is adjacent to every vertex in $N(z)\cap A_{S'}$. By our earlier observations, the vertex $v$ has exactly $i$ neighbors in $S$, $u \in N(v)\cap A_S$, and $x\in \epn(u,S)\setminus N(v)$. Also, recall that $u,x\in B_{S'}$. By Claim~\ref{lem6-cl1}, the vertex $z$ has at least $i$ neighbors in $S'$. Since $z$ is not $S'$-defended, by Lemma~\ref{mainthmlem-1}(b), we have $N(z)\cap S'\subseteq A_{S'}$. Hence, $z$ has at least $i$ neighbors in $A_{S'}$. Since $v$ is adjacent to every vertex of $N(z)\cap A_{S'}$, the vertex $v$ therefore has at least $i$ neighbors in $A_{S'}$. By Lemma~\ref{mainthmlem-1}(e) and the fact that $u\in B_{S'}$, we have $A_{S'}\subseteq A_S\setminus \{u\}$. Hence, $v$ has at least $i$ neighbors in $A_S\setminus \{u\}$. However since $v$ is adjacent to $u\in S$, the vertex $v$ therefore has more than $i$ neighbors in $S$, a contradiction. Hence, $z$ has a neighbor in $A_{S'}$ that is not adjacent to $v$, as claimed.
\end{proof}
	
We now return to the proof of the lemma. Recall that $z$ is a neighbor of $y$ in $V(G)\setminus S'$ that is not $S'$-defended. By Claim~\ref{lem6-cl2}, the vertex $z$ has a neighbor in $A_{S'}$, say $w$, that is not adjacent to $v$. Since $S\subseteq S'$, by Lemma~\ref{mainthmlem-1}(e), $A_{S'}\subseteq A_S$ and hence $w\in A_S$. By our earlier observations, $\{u,w,y\} \subseteq A_S$. Also, recall that $I\subseteq S$ and $I$ is an $\alpha$-set of $G$. By Lemma~\ref{mainthmlem-1}(e), $A_{S}\subseteq A_I$, and so $\{u,w,y\} \subseteq A_I \subseteq I$ and hence $u,w$, and $y$ are mutually non-adjacent vertices. Since $x\in \epn(u,S)\setminus N(v)$, the vertex $x$ is not adjacent to $v$, $w$ and $y$. Recall that $z$ is not adjacent to $u$ and $x$. If $v$ is not adjacent to $z$, then $\{x,u,v,y,z\}$ induces a $P_5$ in $G$, a contradiction. Hence, $v$ is adjacent to $z$. However, then $\{x,u,v,z,w\}$ induces a $P_5$ in $G$, a contradiction. Hence we conclude that $y \notin N(v)\cap S$. By our earlier observations,  $y \in A_{S'} \subseteq A_S$. Therefore, $y \in A_S\setminus (N(v)\cap S)$. This completes the proof of Lemma~\ref{mainthmcl-3}.	
	%	 Since $z$ is not $S'$-defended and has at least $i$ neighbors in $S'$, by Claim~\ref{}, $z$ is not $S$-defended and has at least $i$ neighbors in $S$. By Lemma~\ref{}, $N(z)\cap S'\subseteq A_{S'}$. Since $u,x\in C_{S'}$, $z$ is not adjacent to $u$ and $x$. Since $v$ has $i$-neighbors in $S\subseteq S'$ and $u\in N(v)\cap S$. So $z$ has a neighbor in $S$, say $w$, that is not adjacent to $v$. Since
\end{proof}

We are now in a position to present a proof of Theorem~\ref{thm:main1}. Recall its statement.

\medskip
\noindent \textbf{Theorem~\ref{thm:main1}}. \emph{If $G$ is a $P_5$-free graph, then $\gamma_s(G) \le \frac{3}{2}\alpha(G)$.
}

\begin{proof}
Let $I$ be an $\alpha$-set of $G$. By Observation~\ref{dom}, $I$ is a dominating set of $G$. If $I$ is a secure dominating set of $G$, then $\gamma_s(G)\le |I|=\alpha(G)\le \frac{3}{2}\alpha(G)$. Hence we may assume that $I$ is not a secure dominating set of $G$, for otherwise the desired result follows immediately. Hence there exists a vertex in $V(G)\setminus I$ that is not $I$-defended. We now apply the procedure SDS $P_5$-FREE. The procedure constructs a nested sequence of supersets of $I$. As discussed in the underlying idea of the procedure, it is sufficient to show that for the set $S^*$ obtained at the end of the procedure, $S^*$ is a secure dominating set of $G$ and $|S^*|\le \frac{3}{2}\alpha(G)$. 	
	
We first prove that $S^*$ is a secure dominating set of $G$. To achieve this, we show that at every iteration if there is a vertex $u\in V(G)\setminus S$ that is not $S$-defended with respect to the maintained set $S$ in the procedure, then the procedure adds to the set an appropriate vertex so that the vertex $u$ is $S$-defended with respect to the updated set $S$ at the end of the iteration.

Let $w$ be a vertex in $V(G)\setminus S$ that is not $S$-defended. We note that in the procedure \sdsp, the value of $i$ increases from $2$ to $|I|$ during the execution of the procedure. By Lemma~\ref{mainthmcl-1}, the vertex $w$ has at least two neighbors in $S$. This justifies the minimum value of $i$ to be $2$. By Corollary~\ref{mainthmcl-2-coro}, $|N(w)\cap S|=|N(w)\cap I|$. Hence, $w$ has at most $|I|$ neighbors in $S$. This justifies the upper limit of $i$ to be $|I|$. The procedure \sdsp~iteratively finds a vertex $v$ in $V(G)\setminus S$ which is not $S$-defended and has exactly $i$ neighbors in $S$ in Step~3. If it does not find such a vertex, then the value of $i$ is incremented by~$1$. Suppose, to the contrary, that such a vertex $v$ exists. Then by Lemma~\ref{mainthmcl-2.1}, at the end of the iteration, the maintained set $S$ is updated to a proper superset, say $S'$, such that $v$ (and possibly some more vertices in $V(G)\setminus S$) are $S'$-defended.

We now consider a vertex $z$ that is not $S$-defended. We show that $z$ has at least $i$ neighbors in~$S$. Suppose, to the contrary, that $z$ has $k$ neighbors in $S$ and $k<i$. By Corollary~\ref{mainthmcl-2-coro}, the vertex $z$ has exactly $k$ neighbors in $\widetilde{S}$ for any maintained set $\widetilde{S}$ of any previous iteration of the procedure. Thus, $z$ must have been selected as the vertex $v$ in Step~3 in an earlier iteration of the procedure. Let $\widetilde{S'}$ be the set obtained at the end of that iteration. By Lemma~\ref{mainthmcl-2.1}, the vertex $z$ is $\widetilde{S'}$-defended. Since $\widetilde{S'}\subseteq S$, the vertex $z$ is $S$-defended, a contradiction. Hence, $z$ has at least $i$ neighbors in $S$.  Therefore, if $S^*$ is the superset of $I$ at the end of the procedure, then every vertex in $V(G)\setminus I$ which is not $I$-defended becomes $S^*$-defended. Hence $S^*$ is a secure dominating set of $G$.

We prove next that $|S^*|\le \frac{3}{2}\alpha(G)$. Consider any iteration of the procedure \sdsp~in which the maintained set $S$ is updated to a proper superset $S'$. Let $v$, $u$, and $x$ be the vertices as described in the procedure in Steps~3-4, and so $S'=S\cup \{x\}$. By Lemma~\ref{mainthmcl-1}, $|N(v)\cap S| \ge 2$. By Lemma~\ref{mainthmcl-3}, $A_{S'}\subseteq A_S\setminus (N(v)\cap S)$, and so the vertices in $N(v) \cap S$ are removed from the set $A_S$ when constructing the reduced set $A_{S'}$. Thus, the cardinality of $A_S$ decreases by at least~$2$ when forming $A_{S'}$.  Specifically, $|A_S|$ decreases by at least~$2$ whenever $|S|$ is increased by $1$. Since $I\subseteq S$, by Lemma~\ref{mainthmlem-1}(e), we have $A_S\subseteq A_I$. Hence by our earlier observations, $|A_S|\le |A_I|\le |I|$. Since $A_{S^*}=\emptyset$ for the set $S^*$ obtained at the end of procedure and the procedure starts with $S=I$, we have $|S^*|\le |I|+\frac{1}{2}|A_{I}|\le |I|+\frac{1}{2}|I|=\frac{3}{2}|I|$. Hence $|S^*|\le \frac{3}{2}\alpha(G)$. This completes the proof of the theorem.
\end{proof}	

\section{$(P_3\cup P_2)$-free graphs}
\label{Sect:P3P2-free}

We note that the class of $(P_3 \cup P_2)$-free graphs is a subclass of the class of $P_6$-free graphs. In this section, we prove Theorems~\ref{thm:main2} and~\ref{thm:main3}. Recall the statement of Theorem~\ref{thm:main2}.

\medskip
\noindent \textbf{Theorem~\ref{thm:main2}}. \emph{If $G$ is a $(P_3 \cup P_2)$-free graph, then $\gamma_s(G)\le \alpha(G)+1$.
}

\begin{proof}
Let $G$ be a $(P_3 \cup P_2)$-free graph (possibly disconnected). Let $I$ be an $\alpha$-set of $G$. By Observation~\ref{dom}, $I$ is a dominating set of $G$. We partition $I$ as $A_I\cup B_I$ as defined in (\ref{indeppart}). If $A_I=\emptyset$, then by Lemma~\ref{mainthmlem-1}(a), $I$ is a secure dominating set of $G$ implying that $\gamma_s(G)\le \alpha(G)$. Hence, we may assume that $A_I \ne  \emptyset$, for otherwise the desired result follows. Let $u$ be a vertex in $A_I$. By Lemma~\ref{mainthmlem-1}(c), $\epn(u,I)\ne  \emptyset$. Let $x$ be a vertex in $\epn(u,I)$ and consider the set $S=I\cup \{x\}$. Since $S$ is a superset of $I$, the set $S$ is a dominating set of $G$. If every vertex in $V(G)\setminus S$ is $S$-defended, then $S$ is a secure dominating set of $G$ implying that $\gamma_s(G) \le |S| = \alpha(G)+1$, as desired. Hence, we may assume that there exists a vertex $v\in V(G)\setminus S$ that is not $S$-defended. Let $A_S$ and $B_S$ be the sets as defined in (\ref{indeppart}). We now show that due to the existence of $v$, we get a contradiction to the fact that $G$ is $(P_3 \cup P_2)$-free.
		
Since $I$ is a dominating set of $G$ and $x\in V(G)\setminus I$, we have  $\epn(x,S)=\emptyset$, and so $x\notin A_S$. By Lemma~\ref{epncomp}, $\epn(u,I)$ is a clique of $G$. Hence, every vertex of $\epn(u,I)\setminus \{x\}$ is adjacent to $x$, implying that $\epn(u,S)=\emptyset$ and hence $u\notin A_S$. Then, since $S=A_S\cup B_S$, we have $x,u\in B_S$.  Since $v$ is not $S$-defended, by Lemma~\ref{mainthmlem-1}(b), we have $N(v)\cap S\subseteq A_S$ and hence $v$ is adjacent to neither $u$ nor~$x$.

We claim that $v$ has at least two neighbors in $A_S$. Suppose, to the contrary, that $v$ has only one neighbor in $A_S$, say $z$. By Lemma~\ref{mainthmlem-1}(b), $N(v)\cap S=\{z\}$, that is $v\in \epn(z,S)$. By Lemma~\ref{mainthmlem-1}(e), $A_S\subseteq A_I\subseteq I$ and hence $z\in I$. By Lemma \ref{mainthmlem-1}(d), there is a vertex in $\epn(z,S)$, say $y$, that is not adjacent to $v$. Now since $I\subseteq S$ and $z\in I$, we infer that $\{v,y\} \subseteq \epn(z,I)$. Since $I$ is a maximum independent set of $G$, by Lemma~\ref{epncomp}, $\epn(z,I)$ is a clique, a contradiction to the fact that $v$ is not adjacent to $y$. Hence, $v$ has at least two neighbors in $A_S$. Let $a,b \in N(v) \cap A_S$. By Lemma~\ref{mainthmlem-1}(e), $A_S\subseteq A_I\subseteq I$ and hence $a,b \in I$. Since $\{a,b,u\} \subseteq I$, the set $\{a,b,u\}$ is independent. Moreover, $x$ is adjacent to neither $a$ nor $b$ since $x \in \epn(u,I)$. By our earlier observations, $v$ is adjacent to neither $u$ nor $x$. The set $\{a,v,b,u,x\}$ therefore induces a $P_3\cup P_2$ in $G$, a contradiction. We therefore conclude that the vertex $v$ does not exist, implying that the set $S$ is a secure dominating set of $G$, and so $\gamma_s(G)\le |S| = \alpha(G)+1$. 		
\end{proof}

We now improve the bound given in Theorem~\ref{thm:main2} for the classes of $(P_3 \cup P_1)$-free graphs and $(K_2 \cup 2K_1)$-free graphs, both of which are subclasses of the class of $(P_3 \cup P_2)$-free graphs. In the following lemma, we obtain structural properties of $(P_3 \cup P_1)$-free graphs. As an application of this lemma, we prove that $\gamma_s(G)\le \max\{3,\alpha(G)\}$ if $G$ is a $(P_3 \cup P_1)$-free graph.

\begin{lemma}\label{p3up1lem}
If $G$ is a $(P_3 \cup P_1)$-free graph with $\alpha(G)\ge 3$ and $I$ be an $\alpha$-set of $G$, then every vertex in $V(G)\setminus I$ is either adjacent to exactly one vertex in $I$ or is adjacent to every vertex in $I$.
\end{lemma}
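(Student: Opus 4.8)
The plan is to argue by contradiction: suppose some vertex $w \in V(G) \setminus I$ is adjacent to at least two but not all vertices of $I$. Since $\alpha(G) \ge 3$, pick $a, b \in N(w) \cap I$ with $a \ne b$, and pick $c \in I \setminus N(w)$; such a $c$ exists by assumption. Now $\{a,b\}$ together with the edges $wa$, $wb$ form a $P_3$ (namely $a$--$w$--$b$, which is induced since $a,b \in I$ are non-adjacent), and $c$ is a vertex with no edge to $w$. The only possible obstruction to $\{a,w,b,c\}$ inducing a $P_3 \cup P_1$ is that $c$ might be adjacent to $a$ or to $b$ — but $a,b,c \in I$ and $I$ is independent, so $c$ is adjacent to neither. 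Hence $\{a,w,b,c\}$ induces a $P_3 \cup P_1$ in $G$, contradicting that $G$ is $(P_3 \cup P_1)$-free.

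Therefore no such $w$ exists: every $w \in V(G)\setminus I$ has $|N(w)\cap I| = 1$ or $N(w) \cap I = I$. I should be slightly careful to record first that every $w \in V(G)\setminus I$ has $N(w) \cap I \ne \emptyset$, which follows from Observation~\ref{dom} since $I$, being an $\alpha$-set, is a maximal independent set and hence a dominating set; this rules out $|N(w)\cap I| = 0$ and makes the statement clean. The case split in the conclusion is then exactly ``$|N(w)\cap I|=1$'' versus ``$2 \le |N(w)\cap I| \le |I|$'', and the argument above eliminates all of the latter except $|N(w)\cap I| = |I|$.

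I do not anticipate a genuine obstacle here — the hypothesis $\alpha(G) \ge 3$ is used precisely to guarantee the existence of the ``witness'' vertex $c \in I \setminus N(w)$ when $|N(w) \cap I| = 2$, and more generally for any $w$ adjacent to $k$ vertices of $I$ with $2 \le k < |I|$ we have $|I| \ge 3$ so we can still choose two neighbors $a,b$ in $N(w)\cap I$ and one non-neighbor $c$ in $I \setminus N(w)$. The whole proof is a single forbidden-induced-subgraph check, so it will be short; the only thing to state carefully is that the three chosen vertices $a,b,c$ are pairwise non-adjacent because they all lie in the independent set $I$, which is what makes the induced subgraph on $\{a,w,b,c\}$ exactly $P_3 \cup P_1$ rather than something denser.
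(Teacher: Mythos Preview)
Your proof is correct and is essentially identical to the paper's own argument: both note that $I$ dominates (so the vertex has at least one neighbor in $I$), then assume the vertex has two neighbors $a,b\in I$ and a non-neighbor $c\in I$, and observe that $\{a,w,b,c\}$ induces $P_3\cup P_1$ because $a,b,c$ are pairwise non-adjacent in the independent set $I$. Only the variable names differ.
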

\begin{proof}
Let $v$ be an arbitrary vertex in $V(G)\setminus I$. By Observation~\ref{dom}, the set $I$ is dominating set of $G$ and hence $v$ has at least one neighbor in $I$. Let $u$ be an arbitrary neighbor of $v$ in $I$. If $u$ is the only neighbor of $v$ in $I$, then we are done. Hence we may assume that $v$ has at least two neighbors in $I$. Let $w$ be a neighbor of $v$ in $I$ different from~$u$. Suppose, to the contrary, that $v$ is not adjacent to every vertex in $I$, and let $z$ be a vertex in $I$ that is not adjacent to $v$. Since $I$ is an independent set and $\{u,w,z\} \subseteq I$, the set $\{u,w,z\}$ is an independent set. Thus, $\{u,v,w,z\}$ induces a $P_3\cup P_1$ in $G$, a contradiction. Hence, $v$ is adjacent to every vertex in $I$.
\end{proof}

\begin{theorem}\label{P3UP1}
If $G$ is a $(P_3 \cup P_1)$-free  graph, then $\gamma_s(G)\le \max\{3,\alpha(G)\}$.
\end{theorem}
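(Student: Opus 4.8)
The plan is to split into the easy case $\alpha(G)\le 2$ and the main case $\alpha(G)\ge 3$, using Lemma~\ref{p3up1lem} as the structural workhorse in the latter. When $\alpha(G)\le 2$, the graph $G$ is $3K_1$-free, hence claw-free, so Theorem~\ref{claw-free} gives $\gamma_s(G)\le \tfrac{3}{2}\alpha(G)\le 3 = \max\{3,\alpha(G)\}$; alternatively one can argue directly that $\gamma_s(G)\le 3$ in this case. So I may assume $\alpha(G)\ge 3$ and fix an $\alpha$-set $I$ of $G$; the goal is now to show $\gamma_s(G)\le\alpha(G)=|I|$, and the natural candidate is $I$ itself or a small modification of it that keeps cardinality $|I|$.

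First I would partition $V(G)\setminus I$ according to Lemma~\ref{p3up1lem}: let $F$ be the set of vertices in $V(G)\setminus I$ adjacent to \emph{every} vertex of $I$ (the "full" vertices), and for each $u\in I$ let $P_u$ be the set of vertices in $V(G)\setminus I$ whose unique neighbor in $I$ is $u$ — so $P_u=\epn(u,I)$ and, by Lemma~\ref{epncomp}, each $P_u$ is a clique. Now examine when $I$ fails to be a secure dominating set: a vertex $v\notin I$ is not $I$-defended only if $v\in F$ (if $v\in P_u$ then replacing $u$ by $v$ still dominates $I\setminus\{u\}$ trivially and dominates everything $u$ dominated, since... actually one must check that, and it is exactly here that being "full" versus "private" matters) — more precisely, by Observation~\ref{epnempty} every vertex with $\epn(u,I)=\emptyset$ is already defended by $u$, so trouble vertices live only among those $u\in A_I$ with nonempty private neighborhood, and a vertex $v$ not defended lies in $F$ or adjacent to two vertices of $A_I$. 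I would then show that if $A_I\ne\emptyset$, pick $u\in A_I$ and $x\in\epn(u,I)=P_u$; the set $S=(I\setminus\{u\})\cup\{x\}$ has the same size $|I|$, is still independent-like enough to dominate (one checks $S$ dominates $G$: $u$ is dominated by $x$; every vertex formerly dominated only by $u$ lies in $P_u$, a clique containing $x$, hence is dominated by $x$; full vertices are dominated by the rest of $I$), and then argue $S$ is secure, or iterate. The key point to exploit is that swapping in a private-neighbor vertex $x$ of a clique $P_u$ makes $\epn(x,S)$ small and kills the obstruction, while the $(P_3\cup P_1)$-freeness tightly controls how the full vertices $F$ and the cliques $P_u$ interact.

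The main obstacle I anticipate is verifying that the swapped set $S$ (or the terminal set of such swaps) is genuinely a \emph{secure} dominating set and not merely a dominating set of the right size: I will need to rule out a vertex $v\notin S$ that is not $S$-defended, and the way to do that is to extract a forbidden induced $P_3\cup P_1$ from $v$, its two neighbors in $A_S$ (guaranteed by the analogue of Lemma~\ref{mainthmcl-1}), and a vertex of $I$ non-adjacent to $v$ — mirroring the contradiction derived in the proof of Theorem~\ref{thm:main2}, but now using only four vertices instead of five since we forbid $P_3\cup P_1$ rather than $P_3\cup P_2$. A subtle point is that after the swap $x$ is in $V(G)\setminus I$, so $A_S\subseteq A_I$ (Lemma~\ref{mainthmlem-1}(e)) may fail to place the relevant vertices inside the independent set $I$; I would handle this by working with $I$ directly whenever possible and only performing one carefully-chosen swap, checking that a single swap already yields a secure dominating set, rather than running an unbounded iteration. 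If a single swap does not suffice, the fallback is to show that the obstructions after one swap force $\alpha(G)\le 2$, contradicting our case assumption.
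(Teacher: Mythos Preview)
Your overall strategy is sound and contains all the right ingredients, but you take an unnecessary detour. For the case $\alpha(G)\ge 3$ you correctly use Lemma~\ref{p3up1lem} to partition $V(G)\setminus I$ into the full vertices $F$ and the private cliques $P_u=\epn(u,I)$, and you correctly observe that every $v\in P_u$ is already $I$-defended (by $u$, since $P_u$ is a clique). What you do not notice is that this already forces $I$ itself to be a secure dominating set: if some $v\in F$ were not $I$-defended, then by Lemma~\ref{mainthmlem-1}(d) there is a vertex $x\in\epn(a,I)\setminus N(v)$ for any chosen $a\in I$; since $|I|\ge 3$ you can pick two further vertices $b,c\in I\setminus\{a\}$, and then $\{b,v,c,x\}$ induces a $P_3\cup P_1$ (the vertices $b,c$ are independent and both adjacent to $v$, while $x$ is nonadjacent to each of $b,c,v$), a contradiction. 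This is precisely the four-vertex contradiction you sketch in your final paragraph, but applied to $I$ directly rather than to a swapped set.

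The swap $S=(I\setminus\{u\})\cup\{x\}$ is therefore never needed, and introducing it creates exactly the difficulty you flag: once $S$ is no longer a superset of $I$, Lemma~\ref{mainthmlem-1}(e) and the analogue of Lemma~\ref{mainthmcl-1} are unavailable, so you would have to rebuild the structure of $A_S$ by hand. The paper's proof sidesteps all of this by showing $A_I=\emptyset$ outright whenever $\alpha(G)\ge 3$. (For $\alpha(G)\le 2$ your appeal to Theorem~\ref{claw-free} via $3K_1$-freeness is fine; the paper instead invokes Theorem~\ref{thm:main2}, but either route gives $\gamma_s(G)\le 3$.)
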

\begin{proof}
Let $G$ be a $(P_3 \cup P_1)$-free graph (possibly disconnected) and let $I$ be an $\alpha$-set of $G$. By Observation~\ref{dom}, the set $I$ is a dominating set of $G$. First assume that $|I| \le 2$, that is $\alpha(G)\le 2$. Since the class of $(P_3 \cup P_1)$-free graphs is a subclasses of the class of $(P_3 \cup P_2)$-free graphs, the graph $G$ is $(P_3 \cup P_2)$-free. Hence by Theorem~\ref{thm:main2}, $\gamma_s(G) \le \alpha(G) + 1 \le 3$, yielding the desired result. Hence we may assume that $|I| \ge 3$, that is $\alpha(G)\ge 3$.

We show that $I$ is a secure dominating set of $G$. Suppose, to the contrary, that $I$ is not a secure dominating set of $G$. Let $v$ be a vertex that if not $I$-defended. We note that $v \in V(G)\setminus I$. By assumption, $|I| = \alpha(G) \ge 3$. By Lemma~\ref{p3up1lem}, the vertex $v$ is either adjacent to exactly one vertex in $I$ or adjacent to every vertex in $I$. Suppose that $v$ is adjacent to every vertex in $I$, and let $a$, $b$ and $c$ be three distinct vertices in $I$, and so $\{a,b,c\} \subseteq N(v) \cap I$. By Lemma~\ref{mainthmlem-1}(b), $\{a,b,c\} \subseteq A_I$. By Lemma~\ref{mainthmlem-1}(c)--(d), $\epn(a,I) \ne \emptyset$ and $\epn(a,I) \setminus N(v) \ne \emptyset$. Let $x \in \epn(a,I) \setminus N(v)$. In particular, $x$ is not adjacent to $b$, $c$ and $v$. Thus the set $\{b,v,c,x\}$ induces a $P_3\cup P_1$ in $G$, a contradiction. Hence, $v$ is adjacent to exactly one vertex in $I$, say $u$. Thus, $v \in \epn(u,I)$. By Lemma~\ref{epncomp}, $\epn(u,I)$ is a clique, implying that $(I \setminus \{u\}) \cup\{v\}$ is a dominating set of $G$, that is, the vertex $v$ is $I$-defended, a contradiction. Hence, $I$ is a secure dominating set of $G$, and so $\gamma_s(G) \le |I| = \alpha(G)$, once again yielding the desired result.
\end{proof}

In the following lemma, we obtain structural properties of $(K_2 \cup 2K_1)$-free graph. As an application of this lemma, we prove that $\gamma_s(G) \le \max\{3,\alpha(G)\}$ if $G$ is a $(K_2 \cup 2K_1)$-free graph.

\begin{lemma}\label{k2u2k1lem}
If $G$ is a $(K_2 \cup 2K_1)$-free graph with $\alpha(G)\ge 3$ and $I$ be an $\alpha$-set of $G$, then every vertex in $V(G)\setminus I$ is adjacent at least two vertices in $I$.
\end{lemma}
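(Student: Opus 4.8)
The plan is to argue by contradiction, mirroring the structure of Lemma~\ref{p3up1lem} but using the forbidden induced subgraph $K_2 \cup 2K_1$. Suppose some vertex $v \in V(G)\setminus I$ has at most one neighbor in $I$. By Observation~\ref{dom} the set $I$ dominates $G$, so $v$ has exactly one neighbor in $I$, say $u$; thus $v \in \epn(u,I)$. The idea is to produce an induced $K_2 \cup 2K_1$ using the edge $uv$ together with two independent vertices of $I$ disjoint from $\{u,v\}$ and non-adjacent to both $u$ and $v$. Since $v$'s only neighbor in $I$ is $u$, any vertex of $I \setminus \{u\}$ is automatically non-adjacent to $v$, and since $I$ is independent, vertices of $I\setminus\{u\}$ are non-adjacent to $u$.

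Concretely, because $\alpha(G) \ge 3$, we may pick two distinct vertices $w_1, w_2 \in I \setminus \{u\}$. Then $\{u,v,w_1,w_2\}$ has the edge $uv$ (an induced $K_2$), the vertices $w_1,w_2$ form $2K_1$ since $I$ is independent, $w_1 w_2 \notin E(G)$; moreover $u w_1, u w_2 \notin E(G)$ (independence of $I$) and $v w_1, v w_2 \notin E(G)$ (as $u$ is the unique neighbor of $v$ in $I$). Hence $G[\{u,v,w_1,w_2\}] \cong K_2 \cup 2K_1$, contradicting that $G$ is $(K_2 \cup 2K_1)$-free. Therefore every vertex of $V(G)\setminus I$ has at least two neighbors in $I$.

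I do not anticipate a serious obstacle here; the only point requiring a little care is verifying that all four non-edges $\{uw_1, uw_2, vw_1, vw_2, w_1w_2\}$ genuinely hold, which follows immediately from the independence of $I$ and the assumption that $v$ has a unique neighbor in $I$. The role of the hypothesis $\alpha(G)\ge 3$ is exactly to guarantee that two vertices $w_1,w_2$ distinct from $u$ exist in $I$; if $\alpha(G) \le 2$ the conclusion can fail (as it does for stars, where leaves have a single neighbor in the center-excluding $\alpha$-set), which is why the statement is restricted to $\alpha(G)\ge 3$ and the eventual theorem takes a $\max\{3,\alpha(G)\}$ form.
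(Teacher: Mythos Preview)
Your proof is correct and essentially identical to the paper's: both assume a vertex $v\in V(G)\setminus I$ with a unique neighbour $u\in I$, pick two further vertices of $I\setminus\{u\}$ using $\alpha(G)\ge 3$, and observe that these four vertices induce $K_2\cup 2K_1$. The only difference is cosmetic (your $w_1,w_2$ are the paper's $w,z$), and your closing parenthetical about stars is tangential commentary rather than part of the argument.
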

\begin{proof}
Let $v$ be an arbitrary vertex in $V(G)\setminus I$. By Observation~\ref{dom}, the set $I$ is a dominating set of $G$ and hence $v$ has at least one neighbor in $I$. Suppose, to the contrary, that $v$ has exactly one neighbor in $I$, say $u$. Since $\alpha(G) \ge 3$, the $\alpha$-set $I$ satisfies $|I| \ge 3$. Let $w$ and $z$ be two distinct vertices in $I$ other than $u$. Since $I$ is an independent set and $\{u,w,z\} \subseteq I$, the set $\{u,w,z\}$ is an independent set. Hence, $\{v,u,w,z\}$ induces a $K_2 \cup 2K_1$ in $G$, a contradiction. Therefore, the vertex $v$ has at least two neighbors in $I$.
\end{proof}

\begin{theorem}\label{K2U2K1}
If $G$ is a $(K_2 \cup 2K_1)$-free graph, then $\gamma_s(G) \le \max\{3,\alpha(G)\}$.
\end{theorem}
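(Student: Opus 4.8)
The plan is to mimic the proof of Theorem~\ref{P3UP1}, using Lemma~\ref{k2u2k1lem} in place of Lemma~\ref{p3up1lem}. Let $G$ be a $(K_2 \cup 2K_1)$-free graph and let $I$ be an $\alpha$-set of $G$; by Observation~\ref{dom} it is a dominating set. First I would dispose of the small case $|I| \le 2$: since $(K_2 \cup 2K_1)$-free graphs are $(P_3 \cup P_2)$-free, Theorem~\ref{thm:main2} gives $\gamma_s(G) \le \alpha(G) + 1 \le 3$. So assume $\alpha(G) \ge 3$.

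The main claim is that in this case $I$ itself is already a secure dominating set. Suppose not: let $v \in V(G) \setminus I$ be a vertex that is not $I$-defended. Partition $I = A_I \cup B_I$ as in~(\ref{indeppart}); by Lemma~\ref{mainthmlem-1}(b) we have $N(v) \cap I \subseteq A_I$. By Lemma~\ref{k2u2k1lem}, $v$ has at least two neighbors in $I$, say $a, b \in N(v) \cap A_I$. By Lemma~\ref{mainthmlem-1}(c)--(d), there is a vertex $x \in \epn(a, I) \setminus N(v)$; in particular $x$ is not adjacent to $v$, and since $a, b \in I$ are nonadjacent, $x$ is not adjacent to $b$ either (as $x \in \epn(a,I)$ has $a$ as its unique neighbor in $I$). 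Now I need a vertex to pair off against the edge $\{v, b\}$ or against $a$: since $\alpha(G) \ge 3$, pick a third vertex $c \in I \setminus \{a, b\}$. Then $c$ is nonadjacent to $a$ and $b$, and I need to understand the adjacency of $c$ with $v$ and $x$. If $c$ is nonadjacent to both $v$ and $x$, then $\{v, b\}$ forms a $K_2$ while $c$ and $x$ form the $2K_1$ part — but I must check $x \ne c$ and that $x$ is nonadjacent to $c$; since $x \in \epn(a,I)$, $x$ is nonadjacent to $c \in I \setminus \{a\}$, and $x \notin I$ while $c \in I$, so indeed $\{v, b, c, x\}$ induces a $K_2 \cup 2K_1$, a contradiction.

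The main obstacle is that $c$ might be adjacent to $v$ (it cannot be adjacent to $x$, as just noted). If $v$ is adjacent to all of $a, b, c$, I would instead argue directly as in the proof of Theorem~\ref{P3UP1}: take $x \in \epn(a, I) \setminus N(v)$, which is nonadjacent to $b$, $c$, and $v$; then $\{v, b\}$ is a $K_2$ and $\{x, c\}$ (or rather, I need two isolated vertices) — here $x$ is isolated from $\{v,b\}$ and from $c$, and $c$ is nonadjacent to $v$? No — $c$ is adjacent to $v$ in this subcase. So instead consider whether $\epn(b,I)$ or $\epn(c,I)$ supplies a second private neighbor $y$ nonadjacent to $v$, giving two independent private-neighbor vertices. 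The cleanest route: by Lemma~\ref{mainthmlem-1}(d) applied with the neighbor $a$ there is $x_a \in \epn(a,I)\setminus N(v)$, and with neighbor $b$ there is $x_b \in \epn(b,I)\setminus N(v)$; these are distinct (different unique neighbors in $I$) and nonadjacent (each has its own unique neighbor in $I$), both nonadjacent to $v$. Then $\{v, c, x_a, x_b\}$ — wait, I want an edge plus two isolated vertices. Take the edge $\{a, x_a\}$? No, $x_a$ is adjacent to $a$. So $\{a, v\}$ is an edge (a $K_2$), and $x_b$ is nonadjacent to $a$, $v$; I need one more vertex nonadjacent to $a$, $v$, $x_b$: if $c$ is nonadjacent to $v$ use $c$; if $c$ is adjacent to $v$, then every vertex of $N(v)\cap I$ lies in $A_I$ and I can instead use $x_c \in \epn(c,I) \setminus N(v)$, and $\{a, v\}$ together with $\{x_b, x_c\}$ induces $K_2 \cup 2K_1$ once I verify $x_b, x_c$ are nonadjacent to each other and to $a$ and $v$ — all of which follow from $x_b \in \epn(b,I)$, $x_c \in \epn(c,I)$, $b \ne c$, and $x_b, x_c \notin N(v)$. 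This case analysis, splitting on how many vertices of $I$ the vertex $v$ is adjacent to and producing the appropriate private neighbors, is the crux; everything else is bookkeeping. Having reached a contradiction in all cases, $I$ is a secure dominating set, so $\gamma_s(G) \le |I| = \alpha(G) \le \max\{3, \alpha(G)\}$, completing the proof. \qed
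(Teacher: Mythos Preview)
Your argument has a gap in the final case. When $c$ is adjacent to $v$ and you take $x_b\in\epn(b,I)\setminus N(v)$ and $x_c\in\epn(c,I)\setminus N(v)$, you assert that $x_b$ and $x_c$ are nonadjacent, claiming this ``follows from $x_b\in\epn(b,I)$, $x_c\in\epn(c,I)$, $b\ne c$''. It does not: membership in different external private neighborhoods only controls adjacencies to $I$, not adjacencies between vertices of $V(G)\setminus I$. If $x_bx_c\in E(G)$, then $\{a,v,x_b,x_c\}$ induces $2K_2$, not $K_2\cup 2K_1$. The gap is easily repaired (for instance, $\{a,x_a,b,c\}$ already induces $K_2\cup 2K_1$ regardless of how $v$ sits relative to $c$, since $x_a$ is adjacent to $a$ and to no other vertex of $I$), but as written the proof is incomplete.

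More to the point, the entire case analysis is unnecessary, and the paper's proof is a one-liner once $\alpha(G)\ge 3$. Lemma~\ref{k2u2k1lem} says \emph{every} vertex of $V(G)\setminus I$ has at least two neighbors in $I$; hence $\epn(u,I)=\emptyset$ for every $u\in I$, so by Lemma~\ref{mainthmlem-1}(c) the set $A_I$ is empty, and by Lemma~\ref{mainthmlem-1}(a) the set $I$ is already a secure dominating set. You applied Lemma~\ref{k2u2k1lem} only to the single offending vertex $v$, then worked hard to manufacture a forbidden subgraph; applying it to all vertices at once makes the private neighborhoods vanish and the contradiction immediate (indeed, the very vertex $x_a$ you constructed, having a unique neighbor in $I$, already violates Lemma~\ref{k2u2k1lem}).
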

\begin{proof}
Let $G$ be a $(K_2 \cup 2K_1)$-free graph (possibly disconnected) and let $I$ be an $\alpha$-set of $G$. By Observation~\ref{dom}, the set $I$ is a dominating set of $G$. First assume that $|I| \le 2$, that is $\alpha(G)\le 2$. Since the class of $(K_2 \cup 2K_1)$-free graphs is a subclasses of the class of $(P_3 \cup P_2)$-free graphs, the graph $G$ is $(P_3 \cup P_2)$-free. Hence by Theorem~\ref{thm:main2}, $\gamma_s(G) \le \alpha(G) + 1 \le 3$, yielding the desired result. Hence we may assume that $|I| \ge 3$, that is $\alpha(G)\ge 3$.
We partition $I$ as $A_I\cup B_I$ as defined in~(\ref{indeppart}). By Lemma~\ref{k2u2k1lem}, every vertex $v$ in $V(G)\setminus I$ is adjacent to at least two vertices in $I$. This implies that $\epn(u,I) = \emptyset$ for every vertex $u$ in $I$. Hence by Lemma~\ref{mainthmlem-1}(c), we infer that $A_I = \emptyset$. Thus by Lemma~\ref{mainthmlem-1}(a), the $\alpha$-set $I$ of $G$ is a secure dominating set of $G$, and so $\gamma_s(G) \le |I| = \alpha(G)$, once again yielding the desired result.
\end{proof}

\section{$(P_5,\paw)$-free graphs}
\label{Sect:P5C3-free}

In this section, we improve the upper bound given in  Theorem~\ref{thm:main1} on $(P_5, paw)$-free graphs, a subclass of the class of $P_5$-free graphs, by using structural properties of such graphs. We shall prove Theorem~\ref{thm:main4} which states that if $G$ is a connected $(P_5,\paw)$-free graph, then $\gamma_s(G) \le \max\{3,\alpha(G)\}$. In order to prove this result, we first prove the following key lemma.
%in Theorem~\ref{p5c3thm} that if $G$ is a connected $(P_5,C_3)$-free graph, then $\gamma_s(G)\le \max\{3, \alpha(G)\}$.

\begin{lemma}\label{c3cl-1}
If $G$ is a connected $(P_5,C_3)$-free graph and $C \colon u_1u_2u_3u_4u_5u_1$ is an induced $C_5$ in $G$, then every vertex in $V(G)\setminus V(C)$ has exactly two non-consecutive neighbors in $V(C)$.
\end{lemma}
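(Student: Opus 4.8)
The plan is as follows. Let $v$ be an arbitrary vertex in $V(G)\setminus V(C)$. I first want to show that $v$ has at least one neighbor on $C$, then determine exactly how many and which ones, ruling out configurations that would create a $C_3$ or an induced $P_5$.

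\emph{Step 1: $v$ has a neighbor on $C$.} Since $G$ is connected, there is a path from $v$ to $C$; take a shortest such path and let $v'$ be its last vertex outside $V(C)$, so $v'$ has a neighbor on $C$. If $v'=v$ we are done immediately. Otherwise, I would argue more carefully: actually the cleanest route is to note that a vertex $v$ with no neighbor on $C$, when connected to $C$ through a neighbor $w$ (choosing $w$ at distance $2$ from $C$, adjacent to $v$ and to some $u_j\in V(C)$), yields an induced $P_5$: $v\,w\,u_j\,u_{j+1}\,u_{j+2}$ — provided $w$ has no other neighbor on $C$ and $v$ is not adjacent to $w$'s other... this requires case analysis on how many neighbors $w$ has on $C$. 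So I would instead first establish the claim for vertices \emph{adjacent} to $C$, and then bootstrap. Concretely: (i) prove that every vertex adjacent to $V(C)$ has exactly two non-consecutive neighbors on $C$ (the core combinatorial claim), and (ii) show no vertex can be at distance $\ge 2$ from $C$.

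\emph{Step 2 (core claim): if $v$ is adjacent to $V(C)$, it has exactly two non-consecutive neighbors on $V(C)$.} Since $G$ is $C_3$-free, $N(v)\cap V(C)$ is an independent set in $C$, so $v$ has at most two neighbors on $C$, and if it has two they are non-consecutive. It remains to rule out $v$ having exactly one neighbor on $C$, say $v$ adjacent only to $u_1$. Then the vertices $v, u_1, u_2, u_3, u_4$: edges are $vu_1$, $u_1u_2$, $u_2u_3$, $u_3u_4$, and the non-edges $vu_2, vu_3, vu_4, u_1u_3, u_1u_4, u_2u_4$ all hold (the $u_iu_j$ non-edges because $C$ is an induced $C_5$, the $vu_j$ non-edges by assumption). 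Hence $\{v,u_1,u_2,u_3,u_4\}$ induces a $P_5$, a contradiction. So $v$ has exactly two neighbors on $C$, necessarily non-consecutive.

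\emph{Step 3: rule out distance $\ge 2$.} Suppose some vertex lies at distance $2$ from $C$; pick $w$ adjacent to $v$ (with $v\notin V(C)$ having no neighbor on $C$) and to $C$. By Step~2, $w$ has exactly two non-consecutive neighbors on $C$, say $u_1$ and $u_3$. Since $v$ has no neighbor on $C$, the set $\{v, w, u_1, u_5, u_4\}$ — with edges $vw$, $wu_1$, $u_1u_5$, $u_5u_4$ and all other pairs non-adjacent ($v$ not adjacent to anything on $C$, $w$ not adjacent to $u_5$ or $u_4$, and $u_1u_4$, $vu_5$... wait, need $wu_4$ non-edge: yes since $w$'s only $C$-neighbors are $u_1,u_3$; and $u_1u_4, wu_5$ non-edges hold) — induces a $P_5$, a contradiction. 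Hence $V(G)=N[V(C)]$, and combined with Step~2 every vertex outside $C$ has exactly two non-consecutive neighbors on $C$.

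\textbf{Main obstacle.} The delicate part is Step~3 (and the choice of which $P_5$ to exhibit): one must pick the right four vertices of $C$ relative to $w$'s two neighbors so that the induced subgraph on $\{v,w\}$ together with them is genuinely an induced $P_5$ and not something containing a chord — this forces a short case check on the two possibilities for $w$'s neighbor pair up to the symmetry of $C_5$ (they are all equivalent, so really one case), but care is needed to verify every non-edge. Step~2 is routine once $C_3$-freeness is invoked; Step~1 is subsumed by Steps~2–3.
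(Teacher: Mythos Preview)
Your proposal is correct and follows essentially the same approach as the paper's proof: first show that any vertex with a neighbor on $C$ must have exactly two non-consecutive neighbors (using $C_3$-freeness to bound the count and the induced $P_5$ on $\{v,u_1,u_2,u_3,u_4\}$ to rule out a single neighbor), then rule out vertices at distance~$\ge 2$ from $C$ by exhibiting the induced $P_5$ on $\{v,w,u_1,u_5,u_4\}$. The paper uses the same two $P_5$'s (the second written as $\{u_4,u_5,u_1,v_1,v_2\}$), so there is no substantive difference.
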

\begin{proof}
Let $v\in V(G)\setminus V(C)$. We first show that if $v$ has a neighbor in $G$ that belongs to the cycle $C$, then it has exactly two neighbors in $V(C)$ and these neighbors are non-consecutive. Hence assume that $v$ has a neighbor in $V(C)$. If $v$ has at least three neighbors in $V(C)$, then $G[V(C)\cup \{v\}]$ contains a $C_3$, a contradiction. Hence, the vertex $v$ has either one or two neighbors on $V(C)$. First assume that $|N(v)\cap V(C)|= 1$. Renaming vertices if necessary, we may assume by symmetry that $N(v)\cap V(C)=\{u_1\}$. Then $\{v,u_1,u_2,u_3,u_4\}$ induces a $P_5$ in $G$, a contradiction. Therefore, $|N(v)\cap V(C)|=2$. If the neighbors of $v$ in $V(C)$ are consecutive vertices of $C$, then $G[V(C)\cup \{v\}]$ contains a $C_3$, a contradiction. Hence the neighbors of $v$ in $V(C)$ are non-consecutive vertices of $C$. Thus, every vertex not on the cycle $C$ that has a neighbor in $V(C)$, has exactly two neighbors in $V(C)$ and these neighbors are non-consecutive.

We show next that every vertex not on the cycle $C$ has a neighbor in $G$ that belongs to the cycle $C$. Suppose, to the contrary, that the cycle $C$ is not a dominating cycle, that is, suppose that the set $V(C)$ is not a dominating set of $G$. By the connectivity of $G$, there exists a vertex $v_2$ that is not dominated by the set $V(C)$ but is at distance~$2$ from some vertex of $C$. Renaming vertices of $C$, if necessary, we may assume that $u_1$ and $v_2$ are at distance~$2$ in $G$. Let $v_1$ be a common neighbor of $u_1$ and $v_2$ in $G$. Thus, $v_1$ does not belong to the cycle $C$ but has a neighbor in $G$ that belongs to the cycle $C$, namely $u_1$. By our earlier observations, the vertex $v_1$ has exactly two neighbors in $V(C)$ and these neighbors are non-consecutive. By symmetry, we may assume that $v_1$ is adjacent to $u_1$ and $u_3$. However, then, $\{u_4,u_5,u_1,v_1,v_2\}$ induces a $P_5$ in $G$, a contradiction. Therefore, every vertex in $V(G) \setminus V(C)$ has a neighbor in $V(C)$. By our earlier observations, we infer that every vertex in $V(G)\setminus V(C)$ has exactly two non-consecutive neighbors in $V(C)$.
\end{proof}

Let $G$ be a connected $(P_5,C_3)$-free graph and let $C \colon u_1u_2u_3u_4u_5u_1$ be an induced $C_5$ in $G$. If $G = C$, then $\gamma_s(G) = 3 = \max\{3,\alpha(G)\}$. Hence, we may assume that $G \ne C$, and so $V(G)\setminus V(C) \ne \emptyset$. By Lemma~\ref{c3cl-1}, we can partition $V(G)$ as $(U_1,U_2,U_3,U_4,U_5)$ where $U_i$ is defined as follows.
 \begin{align*}\tag{$\mathcal{P}_2$}\label{c3part}
 	U_i = \{u_i\} \cup \{v\in V(G)\setminus V(C) \, \colon  N(v)\cap V(C)=\{u_{i-1},u_{i+1}\}\}, \, i\in [5].
 \end{align*}

Since $u_i\in U_i$, we note that $U_i \ne \emptyset$ for every $i\in [5]$. In what follows, we use modulo~$5$ arithmetic for the indices. We proceed further with a series of useful claims.	
%Before presenting Theorem~\ref{p5c3thm}, we present a series of useful claims.	

\begin{claim}\label{cl(b)}
For every $i\in [5]$, $U_i$ is an independent set, $[U_i, U_{i+1}]$ is complete, and $[U_i, U_{i+2}]=\emptyset$.
\end{claim}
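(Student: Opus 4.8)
The plan is to prove the three assertions of Claim~\ref{cl(b)} using the partition $(\mathcal{P}_2)$ together with the fact that $G$ is $(P_5,C_3)$-free. The first step is to record the two trivial containments that come directly from $(\ref{c3part})$: every vertex of $U_i\setminus\{u_i\}$ is adjacent to $u_{i-1}$ and $u_{i+1}$, and to no other vertex of $V(C)$; in particular no vertex of $U_i\setminus\{u_i\}$ is adjacent to $u_i$, $u_{i+2}$ or $u_{i-2}$. Throughout I will use modulo~$5$ arithmetic on indices, so that $U_{i+2}$ and $U_{i-2}$ coincide with $U_{i+3}$ and $U_{i-3}$ respectively, and I only need to treat the ``consecutive'' case $[U_i,U_{i+1}]$ and the ``non-consecutive'' case $[U_i,U_{i+2}]$.

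Next I would prove $[U_i,U_{i+2}]=\emptyset$. Suppose $a\in U_i$ and $b\in U_{i+2}$ are adjacent. Consider the path along the cycle from $u_{i+1}$ to $u_{i-1}$ avoiding $u_i$: this is $u_{i+1}u_{i+2}u_{i+3}u_{i+4}=u_{i+1}u_{i+2}u_{i-2}u_{i-1}$. The vertex $a$ is adjacent to $u_{i-1}$ and $u_{i+1}$ but, since $G$ is $C_3$-free and $a$ is adjacent to $u_{i+1}$, $a$ cannot be adjacent to $u_{i+2}$ (that would create the triangle $au_{i+1}u_{i+2}$ — wait, $u_{i+1}u_{i+2}\in E(C)$, so yes that is a triangle), and similarly $a$ is non-adjacent to $u_{i-2}$; also $a$ is non-adjacent to $u_i$ by the partition. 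Symmetrically $b\in U_{i+2}$ is adjacent to $u_{i+1}$ and $u_{i+3}=u_{i-2}$ and non-adjacent to $u_i,u_{i+2},u_{i-1}$. Now I look for an induced $P_5$: taking $a,b$ together with vertices of $C$ that distinguish them, e.g. $u_i\,a\,b\,u_{i-2}\,u_{i-1}$ — here $u_ia\in E$, $ab\in E$, $bu_{i-2}\in E$, $u_{i-2}u_{i-1}\in E$, while $u_ib\notin E$, $u_iu_{i-2}\notin E$ (non-consecutive on $C$), $au_{i-2}\notin E$, $au_{i-1}\in E$ — oops, $au_{i-1}\in E$ spoils it. I would instead use $u_{i+2}\,b\,a\,u_{i-1}\,?$ or argue via the triangle/edge structure more carefully; the cleanest route is probably: since $a$ is adjacent to $u_{i+1}$ and $b$ is adjacent to $u_{i+1}$ and $ab\in E$, we get the triangle $a b u_{i+1}$, directly contradicting $C_3$-freeness. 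So the key observation is simply that $U_i$ and $U_{i+2}$ share the common $C$-neighbor $u_{i+1}$, hence an edge between them forms a triangle; thus $[U_i,U_{i+2}]=\emptyset$.

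For $[U_i,U_{i+1}]$ complete and $U_i$ independent, I would argue both together, and this is where the real work lies. For independence: if $a,a'\in U_i$ were adjacent, then since both are adjacent to $u_{i+1}$ we again get a triangle $aa'u_{i+1}$, contradicting $C_3$-freeness; so $U_i$ is independent. For completeness of $[U_i,U_{i+1}]$: suppose $a\in U_i$, $b\in U_{i+1}$ with $ab\notin E$. Both $a$ and $b$ are adjacent to $u_{i-1}$? No: $a$ is adjacent to $u_{i-1},u_{i+1}$ and $b$ is adjacent to $u_i,u_{i+2}$, so they need not share a $C$-neighbor. I would exhibit an induced $P_5$. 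The candidate is $u_{i+2}\,b\,u_i\,a\,u_{i-1}$: edges $u_{i+2}b\in E$ (from $(\ref{c3part})$, $b\in U_{i+1}$), $bu_i\in E$, $u_ia\in E$, $au_{i-1}\in E$; non-edges needed: $u_{i+2}u_i\notin E$ (non-consecutive), $u_{i+2}a\notin E$ (this is the edge $[U_{i+2},U_i]$ which we just showed is empty — note $a\in U_i$, $u_{i+2}\in U_{i+2}$), $ba\notin E$ (our assumption), $bu_{i-1}\notin E$? Here $u_{i-1}=u_{i+4}$ and $b\in U_{i+1}$ is adjacent to $u_i$ and $u_{i+2}$ only on $C$, so $bu_{i-1}\notin E$. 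Also $u_iu_{i-1}\in E(C)$ — that is fine, $u_i$ and $u_{i-1}$ are consecutive in the path, as required. So $\{u_{i+2},b,u_i,a,u_{i-1}\}$ induces $P_5$, a contradiction. Hence $[U_i,U_{i+1}]$ is complete. The main obstacle is choosing the right five vertices for the $P_5$ in this last step and verifying all $\binom{5}{2}$ adjacencies, which forces me to have $[U_i,U_{i+2}]=\emptyset$ already in hand — so the order of the three parts matters, and I would prove the $[U_i,U_{i+2}]=\emptyset$ statement first, then independence, then completeness.
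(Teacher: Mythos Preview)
Your triangle arguments for $U_i$ independent and $[U_i,U_{i+2}]=\emptyset$ are correct and match the paper exactly (both use the common $C$-neighbour $u_{i+1}$). The gap is in your proof that $[U_i,U_{i+1}]$ is complete: the path $u_{i+2}\,b\,u_i\,a\,u_{i-1}$ does not work. By the definition $(\ref{c3part})$, a vertex $a\in U_i\setminus\{u_i\}$ satisfies $N(a)\cap V(C)=\{u_{i-1},u_{i+1}\}$, so $u_ia\notin E(G)$; and if $a=u_i$ the path has a repeated vertex. Moreover, even setting that aside, the edge $u_iu_{i-1}\in E(C)$ joins the third and fifth vertices of your sequence, which are \emph{not} consecutive in the path, so the subgraph would not be an induced $P_5$ anyway.

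The fix is to use only cycle vertices between $a$ and $b$: with $a\in U_i$, $b\in U_{i+1}$ and $ab\notin E(G)$, the set $\{b,u_{i+2},u_{i+3},u_{i+4},a\}$ induces a $P_5$ (this is the paper's choice). All non-edges here follow directly from the $C$-neighbourhoods of $a$ and $b$ recorded in $(\ref{c3part})$ and from $C$ being induced, so you do not actually need $[U_i,U_{i+2}]=\emptyset$ in advance; the three parts of the claim are logically independent.
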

\begin{proof} [Proof of Claim~\ref{cl(b)}]
If $a$ and $b$ are two adjacent vertices in $U_i$, then $\{a,b,u_{i+1}\}$ induces a $C_3$, a contradiction. Hence, $U_i$ is an independent set of $G$ for every $i \in [5]$. For some $x\in U_i$ and $y\in U_{i+1}$, if $xy\notin E(G)$, then  $\{y,u_{i+2},u_{i+3},u_{i+4}, x\}$ induces a $P_5$ in $G$, a contradiction. Hence $[U_i, U_{i+1}]$ is complete for every $i\in [5]$. For some $x\in U_i$ and $y\in U_{i+2}$, if $xy \in E(G)$, then $\{x, u_{i+1}, y\}$ induces a $C_3$ in $G$, a contradiction. Hence, $[U_i, U_{i+2}]=\emptyset$ for every $i\in [5]$.
\end{proof}

By Claim~\ref{cl(b)}, we infer that the graph $G$ is the expansion of a $5$-cycle in the sense that we replace each vertex $u_i$ on the $5$-cycle $C \colon u_1u_2u_3u_4u_5u_1$ with an independent set $U_i$ and where the open neighborhood of every vertex $v \in U_i$ in $G$ is given by $N(v) = U_{i-1} \cup U_{i+1}$ for all $i \in [5]$.

\begin{claim}\label{cl(c)}
$\gamma_s(G) \le 5$.
\end{claim}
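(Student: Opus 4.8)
The plan is to show that $\{u_1,u_2,u_3,u_4,u_5\} = V(C)$ serves as a secure dominating set of $G$; since $|V(C)| = 5$, this immediately gives $\gamma_s(G) \le 5$. By Claim~\ref{cl(b)} and the discussion following it, $G$ is the blow-up of $C_5$ with parts $U_1,\dots,U_5$, where each $v \in U_i$ has $N(v) = U_{i-1} \cup U_{i+1}$. In particular $V(C)$ meets every part, and since $[U_{i-1},U_i]$ and $[U_i,U_{i+1}]$ are complete, each vertex $v \in U_i \setminus \{u_i\}$ is dominated by $u_{i-1}$ and $u_{i+1}$. Hence $V(C)$ is a dominating set of $G$.

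Next I would verify the secure property: for each $v \in V(G) \setminus V(C)$ I must exhibit a neighbor $w \in V(C)$ such that $(V(C) \setminus \{w\}) \cup \{v\}$ is again dominating. Take $v \in U_i$ for some $i$, and set $w = u_{i-1}$ (the choice $u_{i+1}$ works symmetrically). Consider $D' = (V(C) \setminus \{u_{i-1}\}) \cup \{v\}$. I need to check that every vertex of $G$ is dominated by $D'$. The only vertices whose domination could be threatened are those that were dominated \emph{only} by $u_{i-1}$ in $V(C)$, namely the external private neighbors of $u_{i-1}$ relative to $V(C)$. But by the blow-up structure, $N(u_{i-1}) = U_{i-2} \cup U_i$, and every vertex of $U_{i-2}$ is also dominated by $u_{i-3} \in D'$, while every vertex of $U_i$ is dominated by $u_{i+1} \in D'$; moreover $u_{i-1}$ itself is dominated by $v \in U_i$ since $[U_{i-1},U_i]$ is complete. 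All other vertices of $G$ retain a dominator in $V(C) \setminus \{u_{i-1}\} \subseteq D'$. Hence $D'$ is a dominating set, so $v$ is $V(C)$-defended by $u_{i-1}$.

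Since every vertex of $V(G) \setminus V(C)$ is $V(C)$-defended, $V(C)$ is a secure dominating set of $G$, and therefore $\gamma_s(G) \le |V(C)| = 5$, proving Claim~\ref{cl(c)}. The argument is essentially a direct verification from the explicit blow-up structure established in Claim~\ref{cl(b)}; I do not anticipate a genuine obstacle, only the bookkeeping of checking that removing $u_{i-1}$ from $V(C)$ does not orphan any vertex, which the complete-bipartiteness of consecutive parts handles cleanly.
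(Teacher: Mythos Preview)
Your proposal is correct and takes essentially the same approach as the paper: both use $V(C)=\{u_1,\dots,u_5\}$ as the secure dominating set and rely on the blow-up structure from Claim~\ref{cl(b)}. The only cosmetic difference is that the paper observes $\epn(u_i,V(C))=\emptyset$ for every $i$ (since each $v\in U_i\setminus\{u_i\}$ has two neighbors $u_{i-1},u_{i+1}$ in $V(C)$) and then invokes Lemma~\ref{mainthmlem-1}(a),(c), whereas you carry out the swap verification by hand.
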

\begin{proof}[Proof of Claim~\ref{cl(c)}]
Let $S = V(C) =\{u_1,u_2,u_3,u_4,u_5\}$. By the structure of the graph $G$ the set $S$ is a dominating set of $G$. Moreover, if $v$ is an arbitrary vertex in $V(G) \setminus S$, then $v \in U_i \setminus \{u_i\}$ for some $i \in [5]$, and so $N(v) \cap S = \{u_{i-1},u_{i+1}\}$. This implies that $\epn(u_i,S) = \emptyset$ for all $i \in [5]$. Hence by Lemma~\ref{mainthmlem-1}(c), $A_S = \emptyset$. Thus by Lemma~\ref{mainthmlem-1}(a), the set $S$ is a secure dominating set of $G$, and so $\gamma_s(G) \le |S| = 5$.
\end{proof}

\begin{claim}\label{cl(d)}
If $\alpha(G)= 4$, then $\gamma_s(G)\le 4$.
\end{claim}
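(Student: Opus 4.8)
The plan is to use the blow-up structure of $G$ given by Claim~\ref{cl(b)} and to produce an explicit secure dominating set of size $4$. Write $n_i = |U_i|$ for $i \in [5]$. Since each $U_i$ is independent, $[U_i,U_{i+1}]$ is complete, and $[U_i,U_{i+2}]=\emptyset$, any independent set of $G$ meets at most two of the parts $U_1,\dots,U_5$, and if it meets two of them those two are non-consecutive; hence $\alpha(G)=\max_{i\in[5]}(n_i+n_{i+2})$. Because $\alpha(G)=4$, after cyclically relabelling the vertices of $C$ (so that the partition is relabelled accordingly) I may assume $n_1+n_3=4$, and then $n_i+n_{i+2}\le 4$ for every $i$. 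In particular $1\le n_1,n_3\le 3$; and if $n_1=1$ then $n_3=3$, forcing $n_5\le 4-n_3=1$, hence $n_5=1$, while symmetrically if $n_3=1$ then $n_1=3$ and $n_4=1$.

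I would then take $D=U_1\cup U_3$, so $|D|=n_1+n_3=4$. That $D$ dominates $G$ is immediate: $U_1,U_3\subseteq D$, and since $[U_i,U_{i+1}]$ is complete and $U_1,U_3\ne\emptyset$, every vertex of $U_2$ has a neighbor in $U_1\subseteq D$, every vertex of $U_4$ a neighbor in $U_3\subseteq D$, and every vertex of $U_5$ a neighbor in $U_1\subseteq D$.

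The substantive step, which I expect to be the only real obstacle, is to check that $D$ is secure: every $v\in V(G)\setminus D=U_2\cup U_4\cup U_5$ must be $D$-defended. The delicate case is that for $v\in U_4$ all neighbors of $v$ in $D$ lie in $U_3$, and for $v\in U_5$ all lie in $U_1$, so the defending guard is forced to come from a possibly very small part and one must ensure its removal does not leave $U_4\setminus\{v\}$ or $U_5\setminus\{v\}$ undominated. For $v\in U_4$ I would defend $v$ by an arbitrary $u\in U_3$ and verify that $(D\setminus\{u\})\cup\{v\}$ dominates $G$; the only part that could fail is $U_4$ when $n_4\ge 2$, but then $n_1\le 2$, so $n_3\ge 2$ and the nonempty set $U_3\setminus\{u\}$ still dominates $U_4$. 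For $v\in U_5$ the argument is symmetric, defending by an arbitrary $u\in U_1$ and using that $n_5\ge 2$ forces $n_1\ge 2$. Finally, for $v\in U_2$ (whose neighbors in $D$ fill all of $D$) I would defend $v$ by a guard from $U_1$ if $n_1\ge 2$ and by a guard from $U_3$ if $n_1=1$ (so $n_3=3$); in each case a short check confirms that all five parts stay dominated. This shows $D$ is a secure dominating set, so $\gamma_s(G)\le|D|=4$.
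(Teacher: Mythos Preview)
Your argument is correct, and it proceeds along a genuinely different line from the paper. The paper's proof fixes four \emph{representatives} $\{u_1,u_2,u_3,u_4\}$, one from each of four consecutive parts, and then splits into two cases according to whether some $|U_i|=3$; in each case it verifies directly that this four-element set is a secure dominating set. Your proof instead first identifies the formula $\alpha(G)=\max_{i}(n_i+n_{i+2})$, normalizes so that $n_1+n_3=4$, and takes the whole of $U_1\cup U_3$ as the candidate set. The verification that $D=U_1\cup U_3$ is secure then uses the numerical constraints $n_i+n_{i+2}\le 4$ in a uniform way (e.g.\ $n_4\ge 2\Rightarrow n_1\le 2\Rightarrow n_3\ge 2$), avoiding the case split on the maximum part size. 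Both arguments are short; yours is a bit more conceptual in that the choice of $D$ is forced by the independence-number formula, whereas the paper's choice of $S=\{u_1,u_2,u_3,u_4\}$ is more ad hoc but reuses the same set in both cases.
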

\begin{proof}[Proof of Claim~\ref{cl(d)}]
If $|U_i|\ge 4$ for some $i\in [5]$, then by Claim~\ref{cl(b)}, the set $U_i\cup U_{i+2}$ is an independent set of size at least~$5$, a contradiction. Hence, $|U_i| \le 3$ for every $i\in [5]$. First we assume that $|U_i|\le 2$ for every $i\in[5]$. Let $S=\{u_1, u_2,u_3,u_4\}$. By our earlier observations, $S$ is a dominating set of $G$. If $|U_1| = 2$, then the vertex in $U_1\setminus \{u_1\}$ is $S$-defended by $u_2$. If $|U_2| = 2$, then the vertex in $U_2\setminus \{u_2\}$ is $S$-defended by $u_1$. If $|U_3| = 2$, then the vertex in $U_3 \setminus \{u_3\}$ is $S$-defended by $u_4$. If $|U_4| = 2$, then the vertex in $U_4\setminus \{u_4\}$ is $S$-defended by $u_3$. Finally, every vertex in $U_5$ is $S$-defended by $u_{1}$. Hence in this case when $|U_i|\le 2$ for every $i\in[5]$, every vertex in $V(G) \setminus S$ is $S$-defended, implying that $S$ is a secure dominating set of $G$, and so $\gamma_s(G) \le |S| = 4 = \alpha(G)$.
	
Hence we may assume that $|U_i|=3$ for some $i\in [5]$. Renaming vertices on the cycle $C$ if necessary, we may assume that $|U_2|=3$. If $|U_4| \ge 2$, then  $U_2 \cup U_4$ is an independent set of size at least~$5$, and if $|U_5| \ge 2$, then  $U_2 \cup U_5$ is an independent set of size at least~$5$. Both cases contradict the supposition that $\alpha(G) = 4$. Hence, $U_4=\{u_4\}$ and $U_5=\{u_5\}$. Since $U_1 \cup U_3$ is an independent set and $\alpha(G)= 4$, we infer that $|U_1| \le 2$ or $|U_3| \le 2$. Without loss of generality, we may assume that $|U_1| \le 2$. As before, we let $S=\{u_1,u_2,u_3,u_4\}$. We note that $S$ is a dominating set of $G$.  For $i \in [3]$, every vertex in $U_i \setminus \{u_i\}$ is $S$-defended by $u_{i+1}$. Moreover, the vertex $v_5$ is $S$-defended by $u_4$. Hence every vertex in $V(G)\setminus S$ is $S$-defended, implying as before that $S$ is a secure dominating set of $G$, and so $\gamma_s(G) \le |S| = 4 = \alpha(G)$.
\end{proof}

We are now in a position to prove the following upper bound on the secure domination number of a $(P_5, C_3)$-free graph.

\begin{theorem}\label{p5c3thm}
If $G$ is a connected $(P_5, C_3)$-free graph, then $\gamma_s(G)\le \max\{3,\alpha(G)\}$.
\end{theorem}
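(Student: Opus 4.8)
The strategy is to reduce to the case where $G$ contains an induced $C_5$, since if $G$ is $(P_5,C_3)$-free and contains no induced $C_5$, then $G$ is $(P_5,C_3,C_5)$-free, hence $(P_4,C_4,C_5)$-free... actually more simply: a $(P_5,C_3)$-free graph with no induced $C_5$ is $C_5$-free, so Theorem~\ref{cycle} gives $\gamma_s(G)\le\alpha(G)\le\max\{3,\alpha(G)\}$ directly. (One should double-check that a $C_3$-free graph with no induced $C_5$ and no induced $P_5$ genuinely has no $C_5$ at all — a shortest cycle of length $\ge 6$ in a triangle-free graph is induced and contains an induced $P_5$, so indeed no cycle of length $\ge 5$ exists, and $C_3$-freeness kills the rest; thus $G$ is $C_5$-free.) So assume $G$ contains an induced $C_5$, namely the cycle $C\colon u_1u_2u_3u_4u_5u_1$.

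Now invoke the structural analysis already developed: by Lemma~\ref{c3cl-1} and Claim~\ref{cl(b)}, $G$ is a blow-up of $C_5$ with independent sets $U_1,\dots,U_5$ partitioning $V(G)$, where $N(v)=U_{i-1}\cup U_{i+1}$ for every $v\in U_i$. Note $\alpha(G)=\max_{i\in[5]}(|U_i|+|U_{i+2}|)$ by Claim~\ref{cl(b)} (any independent set lives in the union of two classes that are pairwise non-adjacent, i.e. consecutive in the "second-neighbor" $C_5$, which is again a $C_5$), and since each $U_i\ne\emptyset$ we have $\alpha(G)\ge 2$; in fact $\alpha(G)\ge 3$ unless all $|U_i|=1$, i.e. $G=C$. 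If $G=C$ then $\gamma_s(G)=3=\max\{3,\alpha(G)\}$. So we may assume $\alpha(G)\ge 3$.

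The remaining work is a case analysis on $\alpha(G)$. If $\alpha(G)=3$: then Claim~\ref{cl(c)} gives $\gamma_s(G)\le 5$, but we need $\le 3$; this forces the $U_i$ to be small — $\alpha(G)=3$ means $|U_i|+|U_{i+2}|\le 3$ for all $i$, so at most two of the classes have size $\ge 2$ and those two must be consecutive on $C$, in which case a direct construction of a $3$-element secure dominating set (take one vertex from each of three suitably chosen classes, analogous to the defending arguments in Claim~\ref{cl(d)}) works. If $\alpha(G)=4$: Claim~\ref{cl(d)} already gives $\gamma_s(G)\le 4=\alpha(G)$. If $\alpha(G)\ge 5$: Claim~\ref{cl(c)} gives $\gamma_s(G)\le 5\le\alpha(G)$. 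Collecting the cases yields $\gamma_s(G)\le\max\{3,\alpha(G)\}$ in all situations.

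The main obstacle is the $\alpha(G)=3$ case: one must show that the coarse bound $5$ from Claim~\ref{cl(c)} can be sharpened to $3$, which requires pinning down exactly which blow-ups of $C_5$ have independence number $3$ (essentially: $|U_i|\le 2$ for all $i$, with the two "large" classes consecutive on $C$, all others singletons) and then exhibiting an explicit $3$-vertex secure dominating set for each such configuration and checking every vertex outside it is defended — this is routine but must be done carefully, mirroring the defending-by-a-neighbor bookkeeping already used in Claim~\ref{cl(d)}. A minor subtlety worth stating cleanly is the reduction at the start: one should justify crisply that a $(P_5,C_3)$-free graph with no induced $C_5$ is $C_5$-free (via the shortest-long-cycle argument above) so that Theorem~\ref{cycle} applies.
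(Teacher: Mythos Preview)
Your proposal is correct and follows essentially the same approach as the paper: reduce to the $C_5$-blow-up via Lemma~\ref{c3cl-1} and Claim~\ref{cl(b)}, handle $\alpha(G)\ge 4$ by Claims~\ref{cl(c)}--\ref{cl(d)}, and construct an explicit $3$-vertex secure dominating set when $\alpha(G)=3$ (the paper normalizes to $|U_1|=2$, $|U_3|=|U_4|=|U_5|=1$, $|U_2|\le 2$ and takes $S=\{u_1,u_2,u_4\}$). Your parenthetical worry about the initial reduction is misplaced: in the paper's convention ``$C_5$-free'' means no \emph{induced} $C_5$, so ``no induced $C_5$ $\Rightarrow$ $C_5$-free'' is a tautology and Theorem~\ref{cycle} applies immediately with no shortest-cycle argument needed.
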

\begin{proof}
	Let $G$ be a connected $(P_5, C_3)$-free graph. If $G$ is a $C_5$-free graph, then, by Theorem \ref{cycle}, $\gamma_s(G)\le \alpha(G)$. Hence we assume that $G$ contains an induced $C_5$, for otherwise the desired upper bound follows. Let $C \colon u_1u_2u_3u_4u_5$ be an induced $C_5$ in $G$. We note that $\alpha(G) \ge 2$. If $G = C$, then $\gamma_s(G) = 3 = \max\{3,\alpha(G)\}$. Hence, we may assume that $G \ne C$, and so $V(G)\setminus V(C) \ne \emptyset$. By Lemma~\ref{c3cl-1}, we can partition $V(G)$ as $(U_1,U_2,U_3,U_4,U_5)$ as defined in (\ref{c3part}). Recall that $U_i \ne \emptyset$ for every $i\in [5]$. If $\alpha(G) \ge 4$, then by Claims~\ref{cl(c)}-\ref{cl(d)} we have $\gamma_s(G) \le \alpha(G)$, yielding the desired result.

Hence, we may assume that $\alpha(G) \le 3$. By Claim~\ref{cl(b)}, we infer that the graph $G$ is the expansion of a $5$-cycle in the sense that we replace each vertex $u_i$ on the $5$-cycle $C$ with an independent set $U_i$ and where the open neighborhood of every vertex $v \in U_i$ in $G$ is given by $N(v) = U_{i-1} \cup U_{i+1}$ for all $i \in [5]$. If $|U_i| \ge 3$ for some $i \in [5]$, then $U_i \cup U_{i+2}$ is an independent set of size at least~$4$, a contradiction. Hence, $|U_i| \le 2$ for all $i \in [5]$.

By our earlier assumptions, $G \ne C$. Hence, $|U_i| = 2$ for at least one $i \in [5]$, and so $\alpha(G) = 3$. Renaming vertices of the cycle $C$ if necessary, we may assume that $|U_1| = 2$. Since both $U_1 \cup U_3$ and $U_1 \cup U_4$ are independent sets, we infer that $|U_3| = |U_4| = 1$, that is, $U_3 = \{u_3\}$ and $U_4 = \{u_4\}$. Moreover since $U_2 \cup U_5$ is an independent set and $\alpha(G) = 3$, we note that $|U_2| = 1$ or $|U_5| = 1$ (or both $|U_2| = 1$ and $|U_5| = 1$). By symmetry, we may assume that $|U_5| = 1$, and so $U_5 = \{u_5\}$. We now let $S = \{u_1,u_2,u_4\}$. The set $S$ is a dominating set of $G$. The vertex in $U_1\setminus\{u_1\}$ is $S$-defended by $u_2$. If $|U_2| = 2$, then the vertex in $U_2\setminus\{u_2\}$ is $S$-defended by $u_1$. The vertices $u_3$ and $u_5$ are $S$-defended by $u_4$. Hence every vertex in $V(G)\setminus S$ is $S$-defended, implying that $S$ is a secure dominating set of $G$, and so $\gamma_s(G) \le |S| = 3 = \alpha(G)$.
\end{proof}	

Olariu~\cite{olariu88} proved the following structural result for paw-free graphs.

\begin{theorem}[\cite{olariu88}]
\label{paw}
A graph $G$ is \paw-free graph if and only if every component of $G$ is $C_3$-free or a complete multipartite graph.
\end{theorem}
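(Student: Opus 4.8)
The plan is to prove both implications, reducing throughout to connected graphs: since the paw is connected, $G$ is paw-free if and only if each of its components is paw-free, and each component is triangle-free or complete multipartite independently of the others. So it suffices to treat a single connected paw-free graph and show it is triangle-free or complete multipartite.

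\emph{Sufficiency.} First I would check that triangle-free graphs and complete multipartite graphs are paw-free. A triangle-free graph contains no paw, since the paw contains a triangle. For a complete multipartite graph $H$ with parts $\{Q_j\}$, suppose $\{a,b,c,d\}$ induced a paw with triangle $abc$ and $d$ adjacent only to $a$. Mutual adjacency of $a,b,c$ forces them into three distinct parts, but $d\not\sim b$ forces $d$ into $b$'s part while $d\not\sim c$ forces $d$ into $c$'s part, which is impossible. Hence $H$ is paw-free.

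\emph{Necessity.} The substance is the converse: a connected paw-free graph $G$ containing a triangle is complete multipartite. I would fix a \emph{maximum} clique $K=\{v_1,\dots,v_k\}$; since $G$ has a triangle, $k\ge 3$. The engine is a single local consequence of paw-freeness: \textbf{for any triangle $T$ of $G$, no vertex of $G$ is adjacent to exactly one vertex of $T$} (such a configuration is exactly an induced paw). Applying this with triangles inside $K$, every vertex $w\notin K$ having a neighbor in $K$ misses \emph{exactly one} vertex of $K$: it cannot miss two (a neighbor in $K$ together with two missed clique vertices forms a triangle witnessing a paw), and it cannot miss none (else $K\cup\{w\}$ is a larger clique). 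This partitions $N[K]$ into classes $P_i=\{v_i\}\cup\{w\notin K : w \text{ misses exactly } v_i\}$. I would then verify the complete multipartite structure on $N[K]$ with two further paw arguments. Two vertices in the same class are non-adjacent: if $w,w'\in P_1$ were adjacent, then $\{w,w',v_2\}$ is a triangle and $v_1$ is adjacent only to $v_2$ among them, a paw. Two vertices in different classes are adjacent: if $w\in P_1$ and $w'\in P_2$ were non-adjacent, then $\{v_1,v_3,w'\}$ is a triangle to which $w$ is adjacent only at $v_3$, a paw (here $k\ge 3$ supplies $v_3$). Finally, connectivity forces $N[K]=V(G)$: a vertex $z\notin N[K]$ adjacent to some $w\in N[K]$ would have $w\notin K$, say $w\in P_1$, so $w$ is adjacent to $v_2,v_3$, giving a triangle $\{w,v_2,v_3\}$ to which $z$ is adjacent only at $w$, a paw. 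Thus $V(G)=P_1\cup\cdots\cup P_k$ with each $P_i$ independent and all cross-pairs complete, so $G$ is complete multipartite.

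The main obstacle is the necessity direction, and within it the bookkeeping that upgrades the single rule ``no vertex sees exactly one vertex of a triangle'' into the global partition: one must repeatedly select the right triangle (typically two clique vertices together with an outside vertex) so that a suspected bad adjacency or non-adjacency produces an induced paw, while taking care that the chosen vertices are genuinely distinct and that $k\ge 3$ always supplies the third clique vertex each argument needs.
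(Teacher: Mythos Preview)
The paper does not give its own proof of this statement: Theorem~\ref{paw} is quoted from Olariu~\cite{olariu88} and used as a black box in the proof of Theorem~\ref{thm:main4}. So there is no in-paper argument to compare against.

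Your proposal is a correct self-contained proof. The reduction to connected graphs is sound, the sufficiency direction is routine, and in the necessity direction your single local principle (``no vertex of $G$ is adjacent to exactly one vertex of a triangle'') is exactly the right engine. Each of the four applications you make of it checks out: that an outside vertex with a neighbor in the maximum clique $K$ misses exactly one vertex of $K$; that two vertices in the same class $P_i$ are non-adjacent (via the triangle $\{w,w',v_j\}$ witnessed by $v_i$); that two vertices in different classes are adjacent (via the triangle $\{v_1,v_3,w'\}$ witnessed by $w$, which genuinely needs $k\ge 3$); and that $N[K]=V(G)$ (via the triangle $\{w,v_2,v_3\}$ witnessed by $z$). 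The only thing worth making explicit is the existence of such a boundary vertex $z$ in the last step, but this is immediate from connectivity.
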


Cockayne et al.~\cite{Cockayne05} proved that for a complete multipartite graph $G$, we have $\gamma_s(G) \le \alpha(G)$. We are now in a position to prove Theorem~\ref{thm:main4}. Recall its statement.

\medskip
\noindent \textbf{Theorem~\ref{thm:main4}}. \emph{If $G$ is a connected $(P_5,\paw)$-free graph, then $\gamma_s(G) \le \max\{3,\alpha(G)\}$.
}

\begin{proof}
Let $G$ be a connected $(P_5,\paw)$-free graph. In particular, since $G$ is paw-free, by Theorem~\ref{paw} the graph $G$ is $C_3$-free or a complete multipartite graph. If $G$ is a complete multipartite graph, then as shown in~\cite{Cockayne05} we have $\gamma_s(G) \le \alpha(G)$. If $G$ is $C_3$-free, then $G$ is a connected $(P_5, C_3)$-free graph, and so by Theorem~\ref{p5c3thm} we have $\gamma_s(G) \le \max\{3,\alpha(G)\}$.
\end{proof}

Recall that by Theorem~\ref{$C_3$-free}, if $G$ is a $C_3$-free graph, then $\gamma_s(G) \le \frac{3}{2}\alpha(G)$. Hence as an immediate consequence of this result, Theorem~\ref{paw}, and the result of Cockayne et al.~\cite{Cockayne05}, we have the following upper bound on the secure domination number of paw-free graphs.

\begin{coro}\label{Paw}
If $G$ is a paw-free graph, then $\gamma_s(G)\le \frac{3}{2}\alpha(G)$.
\end{coro}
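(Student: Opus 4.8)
The plan is to deduce the corollary from Olariu's structure theorem (Theorem~\ref{paw}) by working component by component. First I would record the elementary fact that both $\alpha$ and $\gamma_s$ are additive over connected components: if $G_1,\dots,G_k$ are the components of $G$, then $\alpha(G)=\sum_{j=1}^{k}\alpha(G_j)$, and likewise $\gamma_s(G)=\sum_{j=1}^{k}\gamma_s(G_j)$. The latter holds because if $S$ is a secure dominating set of $G$, then $S\cap V(G_j)$ dominates $G_j$, and for every $v\in V(G_j)\setminus S$ the defending neighbor $u$ of $v$ lies in $V(G_j)$, with $(S\setminus\{u\})\cup\{v\}$ dominating $G$ if and only if $\big((S\cap V(G_j))\setminus\{u\}\big)\cup\{v\}$ dominates $G_j$; conversely the disjoint union of secure dominating sets of the $G_j$ is a secure dominating set of $G$. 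Hence it suffices to prove $\gamma_s(G_j)\le\tfrac{3}{2}\alpha(G_j)$ for each component $G_j$.

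Next I would apply Theorem~\ref{paw}. Since $G$ is paw-free, so is each component $G_j$, and therefore each $G_j$ is either $C_3$-free or a complete multipartite graph. In the first case, Theorem~\ref{$C_3$-free} immediately gives $\gamma_s(G_j)\le\tfrac{3}{2}\alpha(G_j)$. In the second case, the result of Cockayne et al.~\cite{Cockayne05} gives $\gamma_s(G_j)\le\alpha(G_j)\le\tfrac{3}{2}\alpha(G_j)$, since $\alpha(G_j)\ge 1$. Summing these inequalities over $j\in[k]$ and using the additivity noted above yields $\gamma_s(G)=\sum_{j=1}^{k}\gamma_s(G_j)\le\tfrac{3}{2}\sum_{j=1}^{k}\alpha(G_j)=\tfrac{3}{2}\alpha(G)$, as desired.

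There is essentially no serious obstacle here: the statement is a genuine corollary, and the only step that requires a sentence of justification rather than a citation is the additivity of $\gamma_s$ over connected components. (Note that a complete multipartite graph with at least three parts does contain induced triangles, but this causes no conflict with the case split, since such a component is handled by the second alternative of Theorem~\ref{paw}.)
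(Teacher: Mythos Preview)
Your proposal is correct and takes essentially the same approach as the paper, which presents the corollary as an immediate consequence of Theorem~\ref{$C_3$-free}, Theorem~\ref{paw}, and the Cockayne et al.\ bound for complete multipartite graphs. You simply make explicit the component-wise additivity of $\alpha$ and $\gamma_s$ that the paper leaves implicit.
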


\section{$(P_5,C_4)$-free graphs}
\label{Sect:P5C4-free}

In this section, we improve the upper bound given in  Theorem~\ref{thm:main1} on $(P_5, C_4)$-free graphs, a subclass of the class of $P_5$-free graphs, by using structural properties of such graphs. We shall prove Theorem~\ref{thm:main5} which states that if $G$ is a connected $(P_5,C_4)$-free graph, then $\gamma_s(G) \le \max\{3,\alpha(G)\}$.

We use the structural result for $(P_5,C_4)$-free graphs given by Fouquet~\cite{Fouquet}. A graph $G$ is called a \emph{complete buoy} if $V(G)$ can be partitioned as  $(A_1,A_2,A_3,A_4,A_5)$ where $A_i$ is a non-empty clique, $[A_i, A_{i+1}]$ is complete, and $[A_i, A_{i+2}]=\emptyset$ for all $i \in [5]$, where the indices follow the modulo~$5$ arithmetic. Thus such a graph $G$ is a blow-up of a $5$-cycle in the sense that we replace each vertex $a_i$ on a $5$-cycle $a_1a_2a_3a_4a_5a_1$ with a clique $A_i$ and where the closed neighborhood of every vertex $v \in A_i$ in $G$ is given by $N[v] = A_{i-1} \cup A_i \cup A_{i+1}$ for all $i \in [5]$.

A subset $X\subseteq V(G)$ with $|X|\ge 2$ is called a \emph{homogeneous set} of $G$ if for any vertex $v\in V(G)\setminus X$, either $[\{v\},X]$ is complete or $[\{v\},X]=\emptyset$.

\begin{theorem}[\cite{Fouquet}]\label{p5c4}
If $G$ is a connected $(P_5,C_4)$-free graph, then $V(G)$ can be partitioned as $V_1\cup V_2$ (one or both are possibly empty) such that the following properties are satisfied.
\begin{enumerate}
\item $G[V_1]$ is $C_t$-free for every $t\ge 4$.
\item If $V_2\ne  \emptyset$, then it can be partitioned into sets inducing maximal complete buoys and each of these sets is a homogeneous set of $G$ whose open neighborhood is a clique in $V_1$. Moreover, there is a clique in $V_1$ whose open neighborhood contains $V_2$.
\end{enumerate}
\end{theorem}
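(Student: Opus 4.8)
This is the structural theorem of Fouquet~\cite{Fouquet}, and in the paper it is simply invoked; here is the route by which one establishes it. The first observation is that since every induced cycle $C_t$ with $t\ge 6$ contains an induced $P_5$, a $(P_5,C_4)$-free graph has no induced cycle of length at least four other than $C_5$; hence requiring $G[V_1]$ to be $C_t$-free for all $t\ge 4$ is the same as requiring $G[V_1]$ to contain no induced $C_5$ (equivalently, $G[V_1]$ is chordal). The proof then splits: if $G$ itself is $C_5$-free, take $V_1=V(G)$, $V_2=\emptyset$ and we are done, so assume $G$ has an induced $C_5$, say $C\colon a_1a_2a_3a_4a_5a_1$ with indices mod $5$, and the task is to peel off a ``buoy part'' around it.

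The technical core is an attachment lemma classifying, for $v\notin V(C)$, the trace $N(v)\cap V(C)$. Using $P_5$-freeness: if $v$ has a neighbor on $C$ it has at least two (else $v$ with four consecutive vertices of $C$ induces a $P_5$), and two neighbors on $C$ cannot be consecutive (if $N(v)\cap V(C)=\{a_i,a_{i+1}\}$ then $v,a_i,a_{i-1},a_{i-2},a_{i+2}$ induces a $P_5$). Using $C_4$-freeness: $v$ cannot have exactly four neighbors on $C$ (the one missed vertex closes a $C_4$ with two of the others), and a set of three neighbors cannot be $\{a_{i-1},a_i,a_{i+2}\}$-shaped (it closes a $C_4$). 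So $N(v)\cap V(C)$ is a non-consecutive pair $\{a_{i-1},a_{i+1}\}$, three consecutive vertices $\{a_{i-1},a_i,a_{i+1}\}$, or all of $V(C)$. Call $v$ of ``type $i$'' in the first two cases, set $A_i=\{a_i\}\cup\{\text{type-}i\text{ vertices}\}$ and $B=A_1\cup\dots\cup A_5$; every vertex of $V(G)\setminus B$ either dominates $V(C)$ entirely or misses it entirely. A short round of $C_4$-freeness arguments (two non-adjacent vertices in the same $A_i$, or in consecutive $A_i,A_{i+1}$, together with $a_{i-1}$ and $a_{i+1}$ close a $C_4$) then shows each $A_i$ is a clique, $[A_i,A_{i+1}]$ is complete, and $[A_i,A_{i+2}]$ is empty, i.e.\ $B$ induces a complete buoy.

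Next one shows $B$ is a homogeneous set of $G$ with clique neighborhood. If $u\notin B$ is adjacent to some $w\in B$ but misses some $w'\in B$, then, choosing $w,w'$ in suitable parts and exploiting the blow-up structure, $u$ extends a short path through $B$ into an induced $P_5$ (for instance, if $u$ has no neighbor on $C$ but touches a blow-up vertex $w\in A_i$, then $u,w,a_{i-1},a_{i-2},a_{i+2}$ is an induced $P_5$); hence $[\{u\},B]$ is complete or empty and $B$ is homogeneous. Consequently $N(B)$ consists exactly of the vertices complete to $V(C)$, and any two such vertices together with two non-adjacent vertices of $B$ close a $C_4$ unless adjacent, so $N(B)$ is a clique. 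Now replace $B$ by an inclusionwise maximal complete buoy of $G$ that is a homogeneous set with clique neighborhood, carry this out for every induced $C_5$ of $G$, and let $V_2$ be the union of the resulting family (which one checks is pairwise disjoint) and $V_1=V(G)\setminus V_2$. Maximality is exactly what forces $G[V_1]$ to be $C_5$-free, which gives part~(a), and the homogeneity and clique-neighborhood conclusions give the first sentence of part~(b). For the ``Moreover'' clause, connectivity together with $P_5$-freeness show the attaching cliques are pairwise joined (a shortest path between two of them, prolonged by a vertex of a buoy, would be an induced $P_5$), so their union lies in a single clique $Q$ of $V_1$ with $N(Q)\supseteq V_2$.

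The main obstacle is the bookkeeping: verifying that the three admissible traces really do assemble into a complete buoy (every inter-part adjacency and non-adjacency has to be extracted from $P_5$- and $C_4$-freeness), and then showing that buoys arising from different induced $C_5$'s are either identical or vertex-disjoint and that their attaching cliques can all be merged, so that $V_2$ is well defined and $G[V_1]$ really is chordal. By comparison, once the structure around a single $C_5$ is understood, the homogeneity, clique-neighborhood, and common-clique statements are short.
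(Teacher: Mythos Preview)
The paper does not prove this statement; it is quoted from \cite{Fouquet} and used as a black box in Section~\ref{Sect:P5C4-free}. You recognize this at the outset, and your sketch follows the standard route to the Fouquet structure theorem: classify the possible traces $N(v)\cap V(C)$ on a fixed induced $C_5$, assemble the five parts into a complete buoy, establish homogeneity and the clique-neighborhood property, and iterate. That architecture is correct.

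Two small imprecisions are worth flagging. First, the completeness of $[A_i,A_{i+1}]$ does not follow from $C_4$-freeness via the pair $a_{i-1},a_{i+1}$ as you suggest, since a vertex of $A_{i+1}$ need not see $a_{i-1}$; one actually uses $P_5$-freeness here (for $u\in A_i$, $v\in A_{i+1}$ non-adjacent with minimal traces, the path $u,a_{i-1},a_{i-2},a_{i+2},v$ is an induced $P_5$). Second, your argument for the ``Moreover'' clause is loose: showing that the attaching cliques are pairwise complete to each other does not by itself make their union a clique, and the statement asks for a single clique in $V_1$ whose open neighborhood covers all of $V_2$; this needs a more careful argument than a shortest-path-plus-one-vertex sketch. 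These are details of execution rather than gaps in strategy, and since the paper itself offers no proof to compare against, there is nothing further to reconcile.
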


We proceed further with the following lemma.

\begin{lemma}\label{buoy}
If $G$ is a connected $(P_5,C_4)$-free graph with $\alpha(G)=2$ and $\gamma_s(G)=3$, then $G$ is isomorphic to a complete buoy.
\end{lemma}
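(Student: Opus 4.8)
The plan is to combine the structure theorem for connected $(P_5,C_4)$-free graphs (Theorem~\ref{p5c4}) with the hypotheses $\alpha(G)=2$ and $\gamma_s(G)=3$ to force $G$ into the shape of a complete buoy. First I would invoke Theorem~\ref{p5c4} to write $V(G)=V_1\cup V_2$ with $G[V_1]$ being $C_t$-free for all $t\ge4$, and $V_2$ partitioning into homogeneous sets each inducing a maximal complete buoy. The key first observation is that a complete buoy contains two non-adjacent vertices (take one vertex from $A_i$ and one from $A_{i+2}$), so if $V_2\ne\emptyset$ then $\alpha(G)\ge\alpha(\text{that buoy})\ge2$; but a buoy on the other hand also contains an induced $P_4$, which together with extra vertices might push $\alpha$ up, so I would check carefully that $\alpha(G)=2$ forces $V_2$ to consist of a single complete buoy $B$ with each "side clique" $A_i$ of size exactly $1$ — i.e. $B\cong C_5$. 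Indeed if some $A_i$ had two vertices $a,a'$, then $\{a,a'\}$ together with the unique vertex of $A_{i+2}$ would be... no, $a,a'$ are adjacent; rather $a$ and a vertex of $A_{i+2}$ are non-adjacent and then any vertex of $A_{i-1}$ adjacent to both — so I'd instead argue that $|A_i|\ge2$ plus $|A_{i+2}|\ge 1$ plus a vertex in $V_1$ off the buoy yields an independent triple, using that $N(B)$ is a clique. The cleanest line: since $N(B)$ is a clique in $V_1$ and $B$ itself contains two non-adjacent vertices $x\in A_1$, $y\in A_3$, any vertex $z\in V(G)\setminus(B\cup N(B))$ is non-adjacent to both $x$ and $y$ (homogeneity of $B$), giving $\{x,y,z\}$ independent unless $V(G)=B\cup N(B)$.

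Next I would treat the two cases $V_2=\emptyset$ and $V_2\ne\emptyset$. If $V_2\ne\emptyset$, by the above $V(G)=B\cup N(B)$ where $B$ is a complete buoy and $N(B)$ is a clique; I then need to show $N(B)$ distributes correctly around the pentagon, i.e. merges into the buoy structure so $G$ itself is a complete buoy. Since $N(B)$ is a clique and $\alpha(G)=2$, and every vertex of $N(B)$ is adjacent to all of $B$, a vertex $w\in N(B)$ together with two independent vertices of $B$ would need... here I'd use $\alpha=2$ to pin down adjacencies: $w$ is complete to $B$, and if $w$ were non-adjacent to some $w'\in N(B)$ that's impossible since $N(B)$ is a clique, so $N(B)\cup B$ — with $w$ complete to all of $B$ — means $w$ sees both endpoints of every non-adjacent pair in $B$; but $B\cong C_5$ has $\alpha=2$ and adding a dominating clique keeps $\alpha=2$ only if that clique is empty or... actually $w$ adjacent to all of a $C_5$ plus $w$ creates a wheel $W_5$ which contains an induced $C_4$? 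No — $W_5$ is $C_4$-free. So I must be more careful: a wheel $W_5$ is $(P_5,C_4)$-free with $\alpha=2$ and $\gamma_s=3$? Check: $\gamma_s(W_5)$ — the hub dominates everything but isn't secure alone; one needs to verify $\gamma_s(W_5)=3$ and note $W_5$ is NOT a complete buoy, which would be a counterexample. So the real content is that the hypothesis $\gamma_s(G)=3$ (not $\le 3$) must be used to exclude such wheels: if $G$ contains a dominating vertex or small dominating structure giving $\gamma_s(G)\le 2$ or forcing $\gamma_s(G)<3$... wait, we're told $\gamma_s(G)=3$. Hmm — then I'd need $\gamma_s(W_5)\ne 3$; let me instead plan to show $\gamma_s(W_5)=2$ (hub plus one rim vertex? the hub $h$ plus rim vertex $u_1$: does $\{h,u_1\}$ dominate? yes; is it secure? for $u_2$: move $h\to u_2$, then $\{u_2,u_1\}$ must dominate — $u_2$ dominates $h,u_1,u_3$ and $u_1$ dominates $h,u_2,u_5$, missing $u_4$; move $u_1\to u_2$ gives $\{h,u_2\}$ dominating, so $u_2$ is defended by $u_1$; similarly all rim vertices are defended by going to the hub — so $\gamma_s(W_5)=2$). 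Good: so $\gamma_s=3$ rules out a vertex complete to an induced $C_5$.

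So the operative principle is: $\alpha(G)=2$ forces $G$ to be a "$C_5$ with its sides blown into cliques plus a dominating clique", and $\gamma_s(G)=3$ (together with $\alpha=2$) forces that no vertex is complete to a $C_5$ while also forcing $G\ne K_n$ (else $\gamma_s=1$) and $G$ not to have $\gamma_s\le2$. I would then argue: in the partition of a complete-buoy-like structure, a vertex adjacent to two consecutive side-cliques but also to a third is complete to a $C_5$ (contradiction), so every vertex's neighborhood among the five sides is exactly two consecutive ones; this is precisely the complete buoy condition, and the fact that each side is a clique follows from $\alpha(G)=2$ (two non-adjacent vertices in one side, plus a vertex two sides over, is an independent triple). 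The case $V_2=\emptyset$ is handled similarly: $G=G[V_1]$ is $\{C_t:t\ge4\}$-free and $(P_5,C_3\text{-allowed})$... here $G$ could have triangles but no $C_4,C_5,\dots$; with $\alpha(G)=2$ such graphs are quite restricted (complement is triangle-free and $C_4$-free, i.e. has girth $\ge5$), and I'd show $\gamma_s\le 2$ unless... actually if $G[V_1]$ has no induced cycle of length $\ge4$ it's "long-hole-free"; combined with $\alpha=2$ I expect $G$ to be a union of cliques joined in a path-like way, and I'd need $\gamma_s=3$ to pin it down — but actually I suspect when $V_2=\emptyset$ we cannot have $\gamma_s(G)=3$ at all (these graphs, being $C_4$- and $C_5$-free with $\alpha=2$, should satisfy $\gamma_s\le 2$), so that case is vacuous and the lemma holds.

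\medskip
The main obstacle I anticipate is exactly the bookkeeping in Case $V_2\ne\emptyset$: showing that the external clique $N(B)$ together with $B$ reorganizes into a genuine complete-buoy partition $(A_1,\dots,A_5)$ of all of $V(G)$, using homogeneity of $B$, the clique property of $N(B)$, $\alpha(G)=2$, and — crucially — $\gamma_s(G)=3$ to prevent a vertex from being complete to an induced $C_5$ (which would give $\gamma_s\le 2$ as in $W_5$ above). The case $V_2=\emptyset$ I expect to dispatch quickly by showing $\gamma_s(G)\le 2$, contradicting $\gamma_s(G)=3$, so it contributes nothing; verifying that is the secondary technical point.
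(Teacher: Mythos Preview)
Your plan is essentially the paper's approach, but you overcomplicate the endgame and introduce one wrong speculation along the way.

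First, the case $V_2=\emptyset$ is a one-liner: then $G$ is $C_5$-free (since every induced $C_5$ would lie in some buoy of $V_2$), so Theorem~\ref{cycle} gives $\gamma_s(G)\le\alpha(G)=2$, contradicting $\gamma_s(G)=3$. No ad hoc analysis of long-hole-free graphs with $\alpha=2$ is needed.

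Second, and more importantly, once you have $V(G)=B\cup N(B)$ with $B$ a complete buoy and $N(B)$ a clique complete to $B$, you should \emph{not} try to ``reorganize'' $N(B)$ into a five-part buoy partition of $G$; that is impossible, since every vertex of $N(B)$ is complete to all of $B$ while a buoy vertex is non-adjacent to two of the five parts. The correct conclusion is that $N(B)=\emptyset$. Your $W_5$ computation is exactly the right mechanism, and it generalizes directly: any $v\in N(B)$ is universal in $G$, and for any $u\in A_i\subseteq B$ the pair $S=\{u,v\}$ is a secure dominating set. Indeed, every $w\notin S$ is adjacent to $v$; if $w$ is also adjacent to $u$ then $(S\setminus\{u\})\cup\{w\}=\{v,w\}$ dominates since $v$ is universal, while if $w\in A_{i\pm 2}$ then $\{u,w\}$ dominates because $u$ covers $A_{i-1}\cup A_i\cup A_{i+1}\cup N(B)$ and $w$ covers the remaining two parts. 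Hence $\gamma_s(G)\le 2$, a contradiction, so $N(B)=\emptyset$ and $G=B$ is itself a complete buoy. This is precisely the paper's argument.

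Finally, your speculation that $\alpha(G)=2$ forces each $A_i$ to be a singleton (so $B\cong C_5$) is false: every complete buoy has independence number exactly~$2$ regardless of the clique sizes $|A_i|$, since an independent set meets at most two of the five (pairwise non-consecutive) parts, and each part is a clique. The lemma's conclusion allows the $A_i$ to be arbitrary non-empty cliques.
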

\begin{proof}
By Theorem~\ref{p5c4}, $V(G)$ can be partitioned as $V_1\cup V_2$ with the properties as defined in Theorem~\ref{p5c4}. If $G$ is $C_5$-free, then by Theorem~\ref{cycle}, $\gamma_s(G)\le \alpha(G)$, a contradiction. Hence, $G$ contains an induced $C_5$, implying that $V_2 \ne  \emptyset$. If $B_1$ and $B_2$ induces two distinct maximal complete buoys, then by Theorem~\ref{p5c4}(b), $B_1\cup B_2\subseteq V_2$ and $[B_1,B_2]=\emptyset$. Hence if $G$ contains more than one maximal complete buoy, then $\alpha(G)\ge 4$, a contradiction. Hence, $G$ contains at most one maximal complete buoy. In fact, $G[V_2]$ is a complete buoy. Moreover, since $\alpha(G)=2$, we have $V_1\setminus N(V_2)=\emptyset$. Therefore, $V(G)=V_2\cup N(V_2)$, where $V_2$ induces a complete buoy in $G$. By Theorem~\ref{p5c4}(b), $N(V_2)$ is a clique (possibly empty) and $[V_2,N(V_2)]$ is complete. If $N(V_2)\ne  \emptyset$, then for a vertex $u\in V_2$ and a vertex $v\in N(V_2)$, the set $S=\{u,v\}$ is a secure dominating set of $G$, a contradiction to the fact that $\gamma_s(G)=3$. Hence, $N(V_2)=\emptyset$ and hence $V(G)=V_2$, that is $G$ is isomorphic to a complete buoy.
\end{proof}

We are now in a position to prove Theorem~\ref{thm:main5}. Recall its statement.

\medskip
\noindent \textbf{Theorem~\ref{thm:main5}}. \emph{If $G$ is a connected $(P_5,C_4)$-free graph, then $\gamma_s(G) \le \max\{3,\alpha(G)\}$.
}

\begin{proof}
We use induction on the number of vertices of $G$ to prove the theorem. If $|V(G)|=1$, then the bound is trivial. Assume that the result is true for every connected $(P_5,C_4)$-free graph with less than $n$ vertices. Let $G$ be a connected $(P_5,C_4)$-free graph with $n$ vertices. By Theorem~\ref{p5c4}, the vertex set $V(G)$ can be partitioned as $V_1\cup V_2$ with the properties we have $\gamma_s(G)\le \alpha(G)$. Hence we may assume that $V_2 \ne  \emptyset$. Let $B$ induce a maximal complete buoy in $G[V_2]$. If $N(B)=\emptyset$, then, since $G$ is connected, $V(G)=B$; that is, $G$ is isomorphic to a complete buoy. Thus, $\alpha(G)=2$ and $\gamma_s(G)=3$. Hence we may further assume that $N(B)\ne  \emptyset$. By Theorem~\ref{p5c4}(b), $N(B)$ is a clique in $V_1$  and hence $V_1\ne \emptyset$. By the definition of a complete buoy, we can partition $B$ as $(A_1,A_2,A_3,A_4,A_5)$ such that $A_i$ is a non-empty clique, $[A_i, A_{i+1}]$ is complete, and $[A_i, A_{i+2}]=\emptyset$ for every $i\in[5]$, where indices follow modulo~$5$ arithmetic. We note that $G[V(G)\setminus A_5]$ is a connected graph since any vertex in $N(B)$ is adjacent to every vertex in $N[A_5]$. Furthermore, $A_5$ is a clique.

Let $H_1$ and $H_2$ be the graphs $G[A_5]$ and $G[V(G)\setminus A_5]$, respectively. We note that $V(G)$ is partitioned as $V(H_1)\cup V(H_2)$. By the induction hypothesis, $\gamma_s(H_2)\le \max\{3,\alpha(H_2)\}$. Suppose, to the contrary, that $\gamma_s(H_2) > \alpha(H_2)$. In this case, we infer that $\gamma_s(H_2)=3$ and $\alpha(H_2)=2$. By Lemma~\ref{buoy}, $H_2$ is isomorphic to a complete buoy. Hence by Theorem~\ref{p5c4}(b), $V(H_2)\subseteq V_2$. Recall that $V(H_1)=A_5\subset B\subseteq V_2$, and so $V(H_1)\cup V(H_2)\subseteq V_2$. Consequently, $V(G)\subseteq V_2$, which is a contradiction to the fact that $V_1\ne  \emptyset$. Hence, $\gamma_s(H_2)\le \alpha(H_2)$.

Since any independent set of $H_2$ is also an independent set of $G$, we have $\alpha(H_2)\le \alpha(G)$. Therefore, $\gamma_s(H_2)\le \alpha(H_2)\le \alpha(G)$. To prove the theorem, it is sufficient to show that there exists a secure dominating set of $G$ of cardinality $\gamma_s(H_2)$. Let $S_{H_2}$ be a secure dominating set of $H_2$ of cardinality $\gamma_s(H_2)$. We note that $S_{H_2}\cap A_5=\emptyset$. Recall that $B\subseteq V_2$ induces a maximal complete buoy, $N(B)$ is a clique in $V_1$, and $[B,N(B)]$ is complete. We now consider the following cases. In each case, we show that there exists a secure dominating set of $G$ of cardinality $\gamma_s(H_2)$.
	
\noindent\textbf{Case 1:} $S_{H_2}\cap N(B)=\emptyset$.

If $|S_{H_2}\cap B|\le 1$, then there exists a vertex $a\in A_i$ for some $i\in[4]$ such that $N[a]\cap S_{H_2}=\emptyset$. Hence $S_{H_2}$ is not a dominating set of $H_2$, a contradiction. Hence, $|S_{H_2}\cap B|\ge 2$. Let $u,v\in S_{H_2}\cap B$, $a_2\in A_2$, and $x\in N(B)$. Such a vertex $x$ exists since $N(B)\ne  \emptyset$. We consider the set $S=(S_{H_2}\setminus \{u,v\})\cup \{a_2,x\}$, and we prove that $S$ is a secure dominating set of $G$. Since $S_{H_2}\cap N(B)=\emptyset$ and $[B,V(H_2)\setminus N[B]]=\emptyset$, any vertex in $V(H_2)\setminus (S_{H_2}\cup N[B])$  is $S_{H_2}$-defended by some vertex in $S_{H_2}\setminus\{u,v\}$. Therefore every vertex in $V(H_2)\setminus (S\cup N[B])$ is $S$-defended by some vertex in $S\setminus\{a_2,x\}$. Every vertex in $A_1,A_2\setminus\{a_2\}$, and $A_3$ is $S$-defended by $a_2$. Moreover, every vertex in $A_4$, $A_5$, and $N(B)\setminus \{x\}$ is $S$-defended by $x$. Hence every vertex in $V(G)\setminus S$ is $S$-defended. Hence, $S$ is a secure dominating set of $G$ of cardinality $\gamma_s(H_2)$.

\noindent\textbf{Case 2:} $S_{H_2}\cap N(B) \ne  \emptyset$ and $|S_{H_2}\cap B|\ge 2$.	

Let $u', v'\in S_{H_2}\cap B$ and $a'_2\in A_2$ and $a'_4\in A_4$ . We consider the set $S'=(S_{H_2}\setminus \{u', v'\})\cup \{a'_2, a'_4\}$, and prove that $S'$ is a secure dominating set of $G$. Since $S_{H_2}\cap N(B)\ne  \emptyset$ and $[B,V(H_2)\setminus N[B]]=\emptyset$, every vertex in $V(H_2)\setminus (S_{H_2}\cup B)$  is $S_{H_2}$-defended by some vertex in $S_{H_2}\setminus\{u',v'\}$. Hence every  vertex in $V(H_2)\setminus (S'\cup B)$  is $S'$-defended by some vertex in $S'\setminus\{a'_2, a'_4\}$. Every vertex in $A_1,A_2\setminus\{a'_2\}$, and $A_3$ is $S'$-defended by $a'_2$. Moreover, every vertex in $A_4\setminus\{a'_4\}$ and $A_5$ is $S'$-defended by $a'_4$. Hence every vertex in $V(G)\setminus S'$ is $S'$-defended. Therefore, $S'$ is a secure dominating set of $G$ of cardinality $\gamma_s(H_2)$.

\noindent\textbf{Case 3:} $S_{H_2}\cap N(B) \ne \emptyset$ and $|S_{H_2}\cap B|\le 1$. 	

We show firstly that $|S_{H_2}\cap N(B)|\ge 2$. Suppose firstly that $|S_{H_2}\cap N(B)|=1$. Let $x'\in S_{H_2}\cap N(B)$. Suppose, to the contrary, that $S_{H_2}\cap B=\emptyset$. We note that the vertices in $B\setminus A_5$ are dominated by $x'$ only. We also note that for any vertex $a\in B\setminus A_5$, the set $(S_{H_2}\setminus \{a\})\cup\{x'\}$ is not a dominating set of $G$. This is a contradiction to the fact that $S_{H_2}$ is a secure dominating set of $H_2$. Hence, $|S_{H_2}\cap B|=1$. Let $a'\in S_{H_2}\cap B$. We note that there exists at least one index $i\in [4]$ such that the vertices in $A_i$ are $S_{H_2}$-defended by $x'$ only. If $A_5\cap N(a') = \emptyset$, then every vertex in $A_5$ is $S_{H_2}$-defended by $x'$. If $A_5\cap N(a')\ne  \emptyset$, then every vertex in $A_5$ is $S_{H_2}$-defended by $a'$. Hence $S_{H_2}$ is a secure dominating set of $G$ of cardinality $\gamma_s(H_2)$.
	
Suppose secondly that $|S_{H_2}\cap N(B)|\ge 2$. Since $|S_{H_2}\cap B|\le 1$, there exists a vertex in $B\setminus A_5$ that is $S_{H_2}$-defended by a vertex $x\in S_{H_2}\cap N(B)$. Let $y\in S_{H_2}\cap N(B)$ be a vertex other than $x$. We note that $y$ dominates every vertex in $N[B]$. Hence every vertex in $A_5$ is $S_{H_2}$-defended by $x$. Therefore, $S_{H_2}$ is a secure dominating set of $G$ of cardinality $\gamma_s(H_2)$.
\end{proof}

\section{Conclusion}

In this paper, we have shown that if $G$ is a $P_5$-free graph, then $\gamma_s(G)\le \frac{3}{2}\alpha(G)$. We remark that the given bound is optimal since for a graph $G$ that is a disjoint union of $C_5$'s. In light of this, we point out two natural questions that need to be considered.

\begin{quest}
Can we improve the bound $\gamma_s(G)\le \frac{3}{2}\alpha(G)$ if $G$ is a connected $P_5$-free graph with $\alpha(G)\ge 3?$
\end{quest}

\begin{quest}
Can we have the bound $\gamma_s(G)\le \frac{3}{2}\alpha(G)$ if $G$ is a  $P_t$-free graph, where $t\ge 6 ? $
\end{quest}

We have also shown that if $G$ is a $(P_3 \cup P_2)$-free graph, then $\gamma_s(G)\le \alpha(G)+1$. Recall that $\alpha(C_5)=2$ and $\gamma_s(C_5)=3$. Moreover, $C_5$ is a $(P_3 \cup P_2)$-free graph. The given bound can be improved for $\alpha(G)\ge 3$. We have proved that if $G$ is a $H$-free graph, where $H \in \{P_3\cup P_1, (K_2 \cup 2K_1)\}$, then $\gamma_s(G)\le \max\{3,\alpha(G)\}$. We have also proved that if $G$ is a connected $(P_5,H)$-free graph, where $H \in \{\paw, C_4\}$, then $\gamma_s(G)\le \max\{3,\alpha(G)\}$. This bound is optimal for $\alpha(G)\ge 2$. When $\alpha(G)=2$, $C_5$ is a tight example. When $\alpha(G)\ge 3$, the star graphs are tight examples.

\medskip

\end{document}